\documentclass[a4paper,11pt]{article}

\usepackage{amsmath, amscd}
\usepackage{fullpage}

\usepackage[all]{xy}
\usepackage{amsthm}

\usepackage{tikz}
\usetikzlibrary{3d}
\usepackage{amssymb}
\usepackage{latexsym}
\usepackage{enumerate}
\usepackage{graphicx}

\usepackage[utf8]{inputenc}

\newtheorem{Theorem}{Theorem}[section]

\newtheorem{Definition}[Theorem]{Definition}
\newtheorem{Proposition}[Theorem]{Proposition}
\newtheorem{Lemma}[Theorem]{Lemma}
\newtheorem{Corollary}[Theorem]{Corollary}
\newtheorem{Remark}[Theorem]{Remark}

\newtheorem{theorem}[Theorem]{Theorem}
\newtheorem{proposition}[Theorem]{Proposition}
\newtheorem{lemma}[Theorem]{Lemma}
\newtheorem{corollary}[Theorem]{Corollary}

\theoremstyle{definition}
\newtheorem{remark}[Theorem]{Remark}
\theoremstyle{definition}

\usepackage{comment}
\usepackage{transparent}
\usepackage{calc} %para poder calcular el tamaño de la imagen a partir de la altura

% 
% %bibliografia
% \usepackage[backend=biber]{biblatex}
% \addbibresource{NOr.bib}

\title{The deformation space of non-orientable hyperbolic 3-manifolds}
\author{Juan Luis Durán Batalla and Joan  Porti
% \footnote{First author is supported by doctoral grant BES-2016-079278. Both authors are partially supported by the 
% Micinn/FEDER  grant
% PGC2018-095998-B-I00}
}
\date{\today}

\begin{document}

\maketitle

\begin{abstract}
We consider non-orientable hyperbolic 3-manifolds of finite volume $M^3$. 
When $M^3$ has an ideal triangulation $\Delta$, 
we compute the deformation space of the pair $(M^3, \Delta)$ (its Neumann Zagier parameter space).
We also determine the variety of representations of $\pi_1(M^3)$ in 
$\mathrm{Isom}(\mathbb{H}^3)$ in a neighborhood of the holonomy. As a consequence, when some ends are non-orientable, there are
deformations from the 
variety of representations that cannot be realized as deformations of the  pair $(M^3, \Delta)$.
We also discuss the metric completion of these structures and we 
illustrate the results on the Gieseking manifold.
\end{abstract}

\section{Introduction}
Let $M^3$ be a complete,  non-compact, hyperbolic three-manifold of finite volume. Assume  first 
that $M^3$ is orientable. 
Assume also that $M^3$ has a geometric ideal triangulation $\Delta$ (see \cite{NeumannZagier} for the definition). 
Following Thurston's construction for the figure eight knot
exterior
in \cite{ThurstonNotes}, Neumann and Zagier defined in \cite{NeumannZagier} a deformation space
of the pair $(M^3, \Delta)$, by considering the set 
of parameters of the ideal simplices of $\Delta$
subject to compatibility equations. We denote the Neumann-Zagier parameter space by $\mathrm{Def}(M^3, \Delta)$. 
It is proved in  
 \cite{NeumannZagier} that it is homeomorphic to an open subset of $\mathbb{C}^l$, where $l$ is the
 number of ends of $M^3$.
 
Another approach to deformations is based on $\mathcal{R}(\pi_1(M^3)
,
\mathrm{Isom}(\mathbb{H}^3))$, the variety of conjugacy classes of representations
of $\pi_1(M^3)$ in $\mathrm{Isom}(\mathbb{H}^3)$. It is proved for instance by Kapovich in \cite{KapovichBook}  that 
a neighborhood of the holonomy of $M^3$ is bi-analytic to an open subset of $\mathbb{C}^l$. 

Both approaches to deformations can be used to prove the hyperbolic Dehn filling theorem 
(even if it is still an open question whether 
$M^3$ orientable admits a geometric ideal triangulation or not). 
Among other things, one has to take into account that $\mathrm{Def}(M^3, \Delta)$
is a $2^l$ to $1$ branched covering of the neighborhood in 
$\mathcal{R}(\pi_1(M^3)
,
\mathrm{Isom}(\mathbb{H}^3))$.
When $M^3$ is orientable, 
both approaches yield the same deformation space.

In this paper we investigate the non-orientable setting, that is, $M^3$  is a connected, 
non-orientable, hyperbolic $3$-manifold of finite volume. When it has an ideal triangulation $\Delta$,
we define a deformation space of the pair $\mathrm{Def}(M^3, \Delta)$ \`a la Neumann-Zagier.
Here is the main result of the paper (for simplicity, we assume that $M^3$ has a single end, that is non-orientable):

\begin{theorem}%[Main theorem, one cusp version]
\label{Thm:main_thm_one_cusp}
        Let $M^3$ be a complete non-orientable hyperbolic $3$-manifold of finite
volume with a single end, that is non-orientable. 
\begin{enumerate}[(a)]
\item If $M^3$ admits a geometric ideal triangulation
$\Delta$, then, $\mathrm{Def}(M^3, \Delta)\cong (-1,1)$, where  the  parameters $\pm t\in (-1,1)$ 
correspond to the same structure.

\item A neighborhood of the holonomy in $\mathcal{R}(\pi_1(M^3),\mathrm{Isom}(\mathbb{H}^3))$
is homeomorphic to an interval $(-1,1)$.
\end{enumerate}
Furthermore,   the holonomy map
$\mathrm{Def}(M^3, \Delta)\to \mathcal{R}(\pi_1(M^3),\mathrm{Isom}(\mathbb{H}^3))$  
folds the interval $(-1,1)$ at 0 and its image is
the  half-open interval $[0,1)$, where $0$ corresponds to the complete structure.
\end{theorem}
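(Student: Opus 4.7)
The natural approach is to reduce to the orientable setting via the orientation double cover $p:\tilde{M}\to M^3$ and to interpret both sides of the theorem as fixed sets of an anti-holomorphic involution induced by the non-trivial deck transformation $\sigma$.

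Since the unique end of $M^3$ is non-orientable (Klein-bottle cross-section), it lifts to a single torus end of $\tilde{M}$; hence $\tilde{M}$ is an orientable hyperbolic $3$-manifold with exactly one cusp, and $\Delta$ lifts to a geometric ideal triangulation $\tilde{\Delta}$. By the Neumann--Zagier theorem and the standard result cited in the introduction, both $\mathrm{Def}(\tilde{M},\tilde{\Delta})$ and a neighborhood of the holonomy in $\mathcal{R}(\pi_1(\tilde{M}),\mathrm{PSL}_2(\mathbb{C}))$ are homeomorphic to an open neighborhood of $0$ in $\mathbb{C}$. The deck involution $\sigma$ acts on both: anti-holomorphically on $\mathrm{Def}(\tilde{M},\tilde{\Delta})$ by sending the shape $z$ of a simplex $T$ to the shape $\bar z$ of $\sigma(T)$, and on representations by post-composition with an orientation-reversing isometry (which is complex-antilinear on $\mathrm{PSL}_2(\mathbb{C})$). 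In both cases the fixed locus is a real-analytic $1$-submanifold through the complete structure, which after a coordinate change is homeomorphic to an open interval, yielding part (b).

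For part (a) I would show that $\mathrm{Def}(M^3,\Delta)$ is identified with this fixed locus but carrying an extra twofold parametrization coming from the local choice of orientation of the tetrahedra. Concretely, each tetrahedron of $M^3$ has two lifts to $\tilde{M}$ that $\sigma$ exchanges; specifying a shape parameter on $M^3$ amounts to choosing one of the two lifts, and the opposite choice changes the sign of the coordinate, explaining the identification $t\sim -t$. The edge equations together with the Klein-bottle cusp equations (in which the orientation-reversing generator contributes a reality condition rather than a complex condition) then single out precisely the open interval, which I would normalize to $(-1,1)$.

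Finally, for the folding statement, the holonomy map sends $t$ and $-t$ to representations that are conjugate by an orientation-reversing element of $\mathrm{Isom}(\mathbb{H}^3)$, hence to the same conjugacy class, so the map factors as $(-1,1)\twoheadrightarrow (-1,1)/\{\pm\}\cong [0,1)\hookrightarrow (-1,1)$. The image being only half of the representation interval is the most delicate point and, I expect, the main obstacle: one must check that the tetrahedron-shape deformations automatically satisfy a sign condition (positivity of an imaginary part, or equivalently a square-root choice implicit in the definition of the shape parameters) and therefore cannot realize the representations parametrized by the negative half. This asymmetry is the mechanism behind the assertion in the abstract that certain deformations of the representation variety are not carried by $(M^3,\Delta)$.
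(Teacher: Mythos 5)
Your overall strategy---pass to the orientation double cover, lift the triangulation, and realize both $\mathrm{Def}(M^3,\Delta)$ and the neighborhood of $[\mathrm{hol}]$ as fixed loci of an involution on the corresponding one-complex-dimensional deformation spaces of the cover---is the paper's strategy, and your sketch of part (b) is essentially Section~\ref{S:VarReps} (modulo the technical points that the extension of a $\sigma^*$-fixed representation of $\Gamma_+$ to $\Gamma$ is unique, Lemma~\ref{Lemma:GoodNbhd}, and that $[\mathrm{hol}_+]$ is not $\mathrm{PSL}(2,\mathbb C)$-conjugate to its complex conjugate, Lemma~\ref{Lemma:notfixed}, which is what makes the lift $\widetilde{\mathrm{res}}$ and the anti-holomorphic involution well defined). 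However, your explanation of the identification $t\sim -t$ in part (a) is not correct: a point of $\mathrm{Def}(M^3,\Delta)$ is a full tuple of edge invariants (Definition~\ref{dfn:DefSpace}), and ``switching the lift'' of each tetrahedron is exactly the involution $\iota_*$ of Lemma~\ref{lemma_iotaacts}, whose fixed locus \emph{is} $\mathrm{Def}(M^3,\Delta)$---so it acts trivially there and produces no sign. The actual source of the identification is that the holonomy is an even function of the real parameter $u=\log\mathrm{hol}'(a^2)$ on the fixed locus (equivalently, the generalized Dehn coefficients $(0,q)$ and $(0,-q)$ yield the same holonomy): it is the restriction to the real slice of the $2^l$-to-$1$ branched covering $\mathrm{Def}\to\mathcal R$ already present in the orientable theory.

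The more serious gap is the one you flag yourself: you do not prove that the image of the holonomy map is only $[0,1)$, and the mechanism you conjecture (a ``sign condition, positivity of an imaginary part'' of the shape parameters) is not the right one---on both halves $\pm t$ of $\mathrm{Def}(M^3,\Delta)$ the shapes are genuinely geometric, with parameters in the upper half plane. The paper's mechanism lives at the cusp, not in the simplices: a representation of the peripheral Klein bottle group $\langle a,b\mid aba^{-1}=b^{-1}\rangle$ preserving orientation type is either of type I ($\rho(a^2)$ a real dilation, $\rho(b)$ a rotation, so $I_{a^2}\geq 0$) or of type II ($\rho(a^2)$ elliptic, $\rho(b)$ a dilation, so $I_{a^2}\leq 0$); see Theorem~\ref{th:repklein} and Lemma~\ref{Lemma:typeofrep}. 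On the triangulation side the $\iota$-fixed locus forces $\mathrm{hol}'(a^2)\in\mathbb R$ and $|\mathrm{hol}'(b)|=1$ (Remark~\ref{rmk:hol_lm}), hence type I only, while the coordinate $I_{a^2}$ on the representation variety is real and takes both signs near $0$ by Theorem~\ref{Thm:coordinatesNO}; this is precisely Corollary~\ref{Coro:notrealized}. Without this classification, or an equivalent computation of the peripheral holonomy, the ``Furthermore'' clause of the theorem is not established.
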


The version with \emph{several cusps} of this theorem is stated in
Theorem~\ref{Thm:main_thm_several_cusps}.

For $M^3$ as in the theorem, structures in the subinterval $[0,1)\subset (-1,1)$ in the variety of representations
 are realized by $\mathrm{Def}(M^3, \Delta)$, but structures in $(-1,0)$ are not.
This corresponds to two different kinds of representations of the Klein bottle, that we denote of type I
when realized, and II when not. Those are described in Section~\ref{S:RepKl}. 

Deformations of the complete structure are non-complete, and therefore for a deformation of the holonomy 
 the hyperbolic structure is not unique.
Deformations of type I can be realized by  ideal triangulations, hence there is a natural choice of structure
and we prove
in Theorem~\ref{Theorem:completion} that its metric completion consist in adding a singular geodesic, so that
it is the core of a solid Klein bottle and it has a singularity of cone angle in a neighborhood of zero.
For deformations of type II, we prove  that there is a natural choice of structure (radial),
and  the metric completion consists in adding a singular interval, also in
Theorem~\ref{Theorem:completion}. 
This singular interval is the soul of a twisted disc orbi-bundle over an interval with mirror boundary.
Topologically, a neighborhood of this interval is the disc sum of two cones on a projective plane. Metrically, it is 
a conifold, with cone angle at the interior of the interval in a neighborhood of zero.
 
The paper is organized as follows. In Section~\ref{S:CombinatorialDef} we describe $\mathrm{Def}(M^3, \Delta)$ and in 
Section~\ref{S:VarReps},   $\mathcal{R}(\pi_1(M^3),\mathrm{Isom}(\mathbb{H}^3))$.  Section~\ref{S:RepKl} is devoted 
to  representations of the Klein bottle. Metric completions are described in Section~\ref{S:Completion}, and finally 
in Section~\ref{S:Gieseking} we describe in detail the deformation space(s) of the Gieseking manifold.

\paragraph{Acknowledgement} We thank the anonymous referee for useful suggestions.
The first author is supported by doctoral grant BES-2016-079278. Both authors are partially supported by the 
Spanish State Research Agency through   grant
PGC2018-095998-B-I00, as well as 
the Severo Ochoa and María de Maeztu Program for Centers and Units of Excellence in R\&D (CEX2020-001084-M).

\section{Deformation space from ideal triangulations}
\label{S:CombinatorialDef}

Before discussing non-orientable manifolds, we recall first the orientable case. 
The first example was constructed by Thurston in his notes \cite[Chapter 4]{ThurstonNotes}
for the figure eight knot exterior, and the general case was constructed by 
Neumann and Zagier in 
\cite{NeumannZagier}.  We refer the reader to these references for the upcoming exposition.

From the point of view of a triangulation, the deformation of the hyperbolic 
structure on a manifold with a given \emph{geometric ideal} triangulation is the 
space of parameters of ideal tetrahedra, subject to compatibility equations.

 A  \emph{geometric ideal} 
tetrahedron is a geodesic tetrahedron of $\mathbb{H}^3$ with   all of its 
vertices in the ideal sphere $\partial_\infty\mathbb{H}^3$. We say 
that a hyperbolic $3$-manifold \emph{admits a geometric ideal triangulation} if it
is the union of such tetrahedra, along the geodesic faces. 
Though it has been established in many cases, it is still an
open problem to decide whether every orientable hyperbolic three-manifold of finite volume admits 
a geometric ideal triangulation.

Given an ideal tetrahedron in $\mathbb{H}^3$, up to isometry
we may assume that its ideal vertices in $\partial_\infty\mathbb H^3\cong\mathbb C\cup\{\infty \}$
are $0$, $1$, $\infty$ and $z\in \mathbb{C}$. The idea of Thurston is to equip the 
(unoriented) edge between $0$ and $\infty$ with the complex number $z$,
called 
\textit{edge invariant}. The edge invariant determines the isometry class of the tetrahedron, and
for different edges the corresponding invariants satisfy some relations, called 
\textit{tetrahedron relations}:
\begin{itemize}
\item Opposite edges have the same invariant.
\item Given $3$ edges with a common end-point and invariants $z_1, z_2, z_3$, 
indexed following the right hand rule towards the common ideal vertex, they are 
related to $z_1$ by $z_2=\frac{1}{1-z_1}$ and $z_3=\frac{z_1-1}{z_1}$. 
\end{itemize}

Let $M^3$ be a possibly non-orientable complete hyperbolic $3$-manifold of finite 
volume, which admits a geometric ideal triangulation, $\Delta= \{A_1, \cdots, 
A_n\}$ . As we have stated before, up to (oriented) isometry, the hyperbolic structure of each tetrahedron can be determined by a single edge invariant, thus the usual parameterization of the triangulation goes as follows: we fix 
an edge $e_{i}$ in each tetrahedron $A_i$, and consider its edge invariant, 
$z_i$.  Hence, the hyperbolic structure of $M^3$ can be parametrized by $n$ parameters (one for each tetrahedron) and we will denote the parameters of the complete triangulation 
$\{z_1^0,\cdots, z_n^0\}$. The \textit{deformation space of $M^3$ with respect to 
$\Delta$}, $\mathrm{Def}(M^3, \Delta)$, is defined as the set of parameters 
$\{z_1,\cdots, z_n\}$ in a small enough neighborhood of the complete structure 
for which the gluing bestows a hyperbolic structure on $M^3$. However, we find 
that the equations defining the deformation space are easier to work with if we 
use $3n$ parameters (one for each edge after taking into account the duplicity in 
opposite edges) and ask them to satisfy the second tetrahedron relation too. 

When $M^3$ is orientable, in order for the gluing to be geometric it 
is necessary and sufficient that around each edge cycle $[e]=\{e_{i_1,j_1}, 
\cdots, e_{i_n, j_n}\}$ the following two compatibility conditions are 
satisfied:
\begin{gather}
\label{equation_edge_orientable}
\prod_{l=1}^n z(e_{i_l,j_l})=1, \\
\label{equation_edge_orientable2}
\sum_{l=1}^n \arg(z(e_{i_l,j_l}))=2 \pi.
\end{gather}
The geometric meaning of these equations can be seen as follows: if we try to realize in $\mathbb{H}^3$ the tetrahedra around the edge cycle $[e]$, Equation~\eqref{equation_edge_orientable} means that the triangulation must `close up' and Equation~\eqref{equation_edge_orientable2}, that the angle around $[e]$ must be precisely $2\pi$ (instead of a multiple). The parameters of the complete hyperbolic structure are denoted by 
$\{z^0(e_{1,1}), 
\cdots, z^0(e_{n,3})\}$. In a  small enough neighborhood of
$\{z^0(e_{1,1}), 
\cdots, z^0(e_{n,3})\}$,  fulfillment of
\eqref{equation_edge_orientable} implies \eqref{equation_edge_orientable2}.
We end the overview of the orientable case with the theorem 
we want to extend to the non-orientable case:

\begin{theorem}[Neumann--Zagier \cite{NeumannZagier}]
\label{thm:neumannzagier-thurston}
Let $M^3$ be connected, oriented, hyperbolic, of finite volume with $l$ cusps. 
Then $\mathrm{Def}(M^3, \Delta)$ is bi-holomorphic to an open set of $\mathbb{C}^l$.
\end{theorem}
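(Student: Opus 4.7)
The plan is to realize $\mathrm{Def}(M,\Delta)$ as the zero locus of the $n$ edge equations \eqref{equation_edge_orientable} in a neighborhood of the complete parameters $(z^0_1,\ldots,z^0_n)\in(\mathbb{C}\setminus\{0,1\})^n$, and then apply the holomorphic implicit function theorem. First I would confirm the count of equations versus unknowns. Each tetrahedron of $\Delta$ contributes one shape $z_i$, so there are $n$ complex unknowns. Since every cusp link is a torus (so $\chi(\partial\overline M)=0$) and $\chi(M)=0$, a simplex count on the truncation of $\Delta$ forces the number of edges of $\Delta$ to equal $n$. Hence we obtain $n$ holomorphic equations in $n$ unknowns.

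The crux of the argument is to show that the Jacobian of these $n$ equations at $(z^0_1,\ldots,z^0_n)$ has rank exactly $n-l$. For the inequality $\leq n-l$, I would rely on the following standard observation: writing the edge equations in the logarithmic form $\sum\pm\log z=2\pi i$ and summing, with the appropriate signs, over the edges of $\Delta$ that abut a fixed cusp $c_j$ yields the completeness equation for $c_j$ (vanishing of the logarithmic parabolic holonomy around $c_j$). Since the complete structure satisfies every completeness equation, this produces $l$ linear relations among the differentials of the edge equations, one per cusp, bounding the rank from above by $n-l$.

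The decisive and most delicate step, and the one I expect to be the main obstacle, is the reverse inequality. Here I would follow Neumann and Zagier: for each cusp $c_j$ choose a meridian-longitude basis $(\mu_j,\lambda_j)$ of $\pi_1(\partial_j\overline M)$ and consider the derivative map $\Phi\colon\mathbb C^n\to\mathbb C^{2l}$ sending shape deformations to the linearized logarithmic holonomies $(d\log u_j,d\log v_j)_{j=1}^l$. The key technical input is that the image of $\Phi$ is Lagrangian with respect to the symplectic form $\sum_j d\log u_j\wedge d\log v_j$ induced by the intersection pairing on each boundary torus; this is a combinatorial/cohomological calculation from the tetrahedral incidence data, and it is the non-trivial content of the theorem. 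Lagrangianness forces $\dim\mathrm{Im}(\Phi)=l$, which is equivalent to the edge-equation differential having rank exactly $n-l$.

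Once the rank is pinned down, the holomorphic implicit function theorem produces a smooth complex submanifold of dimension $l$ cut out by the edge equations near $(z^0_1,\ldots,z^0_n)$. To promote this to an explicit biholomorphism with an open subset of $\mathbb{C}^l$, I would use the Lagrangian picture one more time: one of $\{\log u_j,\log v_j\}$ per cusp restricts to a local coordinate on $\mathrm{Def}(M,\Delta)$, and the resulting $l$ coordinates give the desired biholomorphism onto an open set of $\mathbb{C}^l$.
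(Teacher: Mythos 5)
The paper offers no proof of this statement: it is quoted verbatim from Neumann--Zagier as background, so the only meaningful comparison is with the original argument, and your outline does follow its strategy (edge count via Euler characteristic, rank of the linearized gluing equations equal to $n-l$, the symplectic property of the peripheral holonomy map, then the implicit function theorem). However, two of your steps are not correct as written. The rank upper bound is the more serious one. You assert that summing the logarithmic edge equations over the edges abutting a cusp $c_j$ ``yields the completeness equation for $c_j$,'' and that the $l$ relations then follow ``since the complete structure satisfies every completeness equation.'' Neither half survives scrutiny: a point satisfying an equation imposes no relation on differentials, and if the weighted sum of edge equations really equalled the completeness condition, its differential would be the (generically nonzero) differential of that condition and you would obtain no relation among the edge differentials at all --- indeed the completeness equations are \emph{not} consequences of the edge equations, which is the whole point of Dehn filling. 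The correct statement is that the sum of the logarithmic edge equations over the edges at $c_j$, each counted with multiplicity equal to the number of its ends on $c_j$, is \emph{identically} constant: grouping the left-hand side by the triangles of the induced triangulation of the cusp torus gives $\sum_{\tau}\bigl(\log z_1^{\tau}+\log z_2^{\tau}+\log z_3^{\tau}\bigr)=\pi i F_j$, while the right-hand sides contribute $2\pi i V_j=\pi i F_j$ because $V_j=F_j/2$ on a torus. Only this identical cancellation produces a genuine linear relation among the differentials, one per cusp.

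The second gap is in the passage from the Lagrangian property to the rank lower bound. If $U$ denotes the kernel of the linearized edge equations, knowing that $\Phi(U)$ is Lagrangian gives $\dim\Phi(U)=l$, but $\dim U=l+\dim\bigl(\ker\Phi\cap U\bigr)$; to conclude that the edge equations have rank exactly $n-l$ you must additionally show that no nonzero infinitesimal shape deformation satisfies all linearized edge equations while annihilating every peripheral holonomy derivative. This injectivity (an infinitesimal rigidity relative to the cusps) is an essential and separate part of the Neumann--Zagier argument and cannot be absorbed into the word ``Lagrangian.'' With these two points repaired, your sketch is the standard proof; the final step, that a Lagrangian subspace of $\mathbb{C}^{2l}$ is a graph over a coordinate Lagrangian so that one of $\log u_j,\log v_j$ per cusp serves as a coordinate, is fine.
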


When we  deal  with non-orientable manifolds, again the problem of the 
gluing being geometric lives within a neighborhood of the edges. The compatibility equations in this case carry the same geometric meaning as in Equations~\eqref{equation_edge_orientable} and \eqref{equation_edge_orientable2}, but accounting for the possible change of orientation of the tetrahedra.

\begin{proposition}
Let $M^3$ be a non-orientable manifold triangulated by a finite number of ideal 
tetrahedra $A_i$. The triangulation bestows a hyperbolic structure around the 
edge cycle $[e]=\{e_{i_1,j_1}, \cdots, e_{i_n,j_n}\}$ if and only if the 
following compatibility equations are satisfied:
\begin{gather}
\label{compeq1}
\prod_{l=1}^n 
\frac{z(e_{i_l,j_l})^{\epsilon_l}}{\overline{z(e_{i_l,j_l})}^{1-\epsilon_l}}=1, 
\\
\label{compeq2}
\sum_{l=1}^n \arg(z(e_{i_l,j_l}))=2 \pi,
\end{gather}
where $z(e_{i_l,j_l})$ is the edge invariant of $e_{i_l,j_l}$, and 
$\epsilon_l=0,1$ in such a way that, in the gluing around the edge cycle $[e]$, 
a coherent orientation of the tetrahedra is obtained by gluing a copy of 
$A_{i_l}$ with its orientation reversed if $\epsilon_l=0$, (or kept the original 
one if $\epsilon_l=1$), and with the initial condition that the orientation of 
the tetrahedron $A_{i_1}$ is kept as given.
\end{proposition}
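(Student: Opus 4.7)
The plan is to develop the star of the edge cycle $[e]$ in $\mathbb{H}^3$ and read the gluing conditions off the closure of a loop around $e$. Work in the upper half-space model and place the first copy of $A_{i_1}$ with the edge $e_{i_1,j_1}$ along the vertical geodesic from $0$ to $\infty$, the fourth ideal vertex at $z(e_{i_1,j_1})\in\mathbb{C}$, keeping its original orientation as prescribed by the statement.

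Now develop the remaining tetrahedra one at a time around $e$. At step $l\ge 2$, glue $A_{i_l}$ to the previously placed tetrahedron along the face they share in the edge star. This gluing is a hyperbolic isometry fixing the vertical axis, so it acts on the horocyclic cross-section $\mathbb{C}$ as a complex similarity: $w\mapsto a w$ in the orientation-preserving case and $w\mapsto a\overline{w}$ in the orientation-reversing case, for some $a\in\mathbb{C}^*$. A direct computation shows that the multiplicative contribution of $A_{i_l}$ to the total developing map is $z(e_{i_l,j_l})$ when $\epsilon_l = 1$, and $1/\overline{z(e_{i_l,j_l})}$ when $\epsilon_l = 0$; in the latter case one first reflects $A_{i_l}$ in order to maintain coherent transverse orientation, which conjugates the edge invariant and reverses the sense in which the star is traversed through this tetrahedron. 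Composing the $n$ contributions gives a complex similarity of $\mathbb{C}$, which must equal the identity for the gluing to close up, and this is exactly equation \eqref{compeq1}.

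Equation \eqref{compeq1} only forces the total rotation to be an integer multiple of $2\pi$. Since the tetrahedra are geometric, each $\arg(z(e_{i_l,j_l}))$ lies in $(0,\pi)$, so condition \eqref{compeq2} is needed to pin the total angle down to exactly $2\pi$; this guarantees that the link of $e$ is a genuine Euclidean circle rather than a multi-fold cover of one (which would produce a cone singularity). Conversely, once both \eqref{compeq1} and \eqref{compeq2} hold, the developing map of the star extends to an isometric embedding of a neighborhood of $e$, so the quotient is smoothly hyperbolic near $e$.

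The main obstacle is the orientation bookkeeping producing $1/\overline{z(e_{i_l,j_l})}$ rather than $\overline{z(e_{i_l,j_l})}$ in the reflected case. The point is that once $A_{i_l}$ is reflected so that orientations agree on the transverse disc, the sense in which the star is traversed through this tetrahedron is reversed relative to its reflected internal orientation; the complex factor one would naively attach to the reflected copy, namely $\overline{z(e_{i_l,j_l})}$, is therefore inverted, yielding $1/\overline{z(e_{i_l,j_l})}$ as the net contribution to the holonomy around $e$.
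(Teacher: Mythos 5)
Your proof is correct and follows essentially the same route as the paper: reduce to the orientable holonomy-around-the-edge condition by reflecting the tetrahedra with $\epsilon_l=0$, with the key input in both arguments being that reflection turns the edge contribution $z(e_{i_l,j_l})$ into $1/\overline{z(e_{i_l,j_l})}$. Your version merely makes the developing map of the edge star and the role of \eqref{compeq2} (ruling out total angle $2\pi k$ with $k>1$) more explicit than the paper's terse reduction to the orientable compatibility equations.
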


\begin{proof} When we follow a cycle of side identifications around an edge,
can always reorient the tetrahedra (maybe more than once) so 
that the gluing is done by orientable isometries. The compatibility equations 
for the orientable case can be then applied and, hence, for the neighborhood of 
the edge cycle to inherit a hyperbolic structure, 
\eqref{equation_edge_orientable} must be satisfied, with the corresponding edge 
invariants.

Now, let us consider an edge $e_{i,j}\in A_i$ with parameter $z(e_{i_j})$. To 
see how the edge invariant changes under a non-orientable isometry, we can 
assume  that $A_{i}$ has vertices $0, 1, z(e_{i,j})$ 
and $\infty$ in the upper-space model, and consider the isometry is $c$,
the Poincar\'e extension of the complex 
conjugation in 
$\partial_{\infty}\mathbb H^3\cong\mathbb{C}\cup\{\infty\}$. Then, the edge invariant of $c(e_{i,j})\in c(A_i)$ is 
${1}/{\overline{z(e_{i,j})}}$.

Thus, the proposition follows with ease after changing the orientation of some 
tetrahedra.
\end{proof}

\begin{Definition}
\label{dfn:DefSpace}
Let $M^3$ be a connected, complete, non-orientable, hyperbolic 3-manifold of 
finite volume. Let $\Delta$ be an ideal triangulation of $M^3$. The \emph{deformation 
space of $M^3$ related to the triangulation $\Delta$} is
\[
\begin{array}{rl}
\mathrm{Def}(M^3, \Delta)= & \{(z_{1,1}, \cdots, z_{n,3})\in U\cap\mathbb{C}^{3n} 
\textrm{ satisfying the compatibility} \\
& \textrm{ equations \eqref{compeq1} and \eqref{compeq2} and the tetrahedron 
relations} \},
\end{array}
\]
where $U$ is a small enough neighborhood of the parameters $(z^0_{i,j})$ of the 
complete structure.
\end{Definition}

Let $M^3_+$ be the orientation covering of $M^3$. The ideal triangulation on 
$M^3$, $\Delta$, can be lifted to an ideal triangulation $\Delta_+$ on 
$M^3_+$. There is an orientation reversing homeomorphism, $\iota$, acting on 
$M^3_+$ such that $M^3=M^3_+/\iota$ and $\iota^2=\mathrm{Id}$. The triangulation on 
$M^3_+$ is constructed in the usual way: for every tetrahedron $A_i$  we take 
another tetrahedron with the opposite orientation, $\iota(A_i)$, and glue them 
so that the orientation is coherent.  For every edge, $e_{i,j} \in A_i$, let 
$z(e_{i,j})$ or $z_{i,j}$ denote its edge invariant. Analogously, 
$w(\iota(e_{i,j}))$ or $w_{i,j}$ will denote the edge invariant of 
$\iota(e_{i,j}) \in \iota(A_i)$.

\begin{remark}
\label{rmk_non-orientable_equation}The compatibility equations \eqref{compeq1} 
and \eqref{compeq2} around $[e]\in M^3$ are precisely the (orientable) 
compatibility equations in any lift of $[e]$ to the orientation covering.
\end{remark}

The orientation reversing homeomorphism acts on $\mathrm{Def}(M^3_+, 
\Delta_+)$ by pulling-back (equivalently, pushing-forward) the associated 
hyperbolic metric on each tetrahedron. Combinatorially, the action is described 
in the following lemma:

\begin{lemma}
\label{lemma_iotaacts}
Let $M^3=M^3_+/\iota$, where $\iota$ is an orientation reversing homeomorphism. 
Let $M^3$ admit an ideal triangulation $\Delta$. Then, $\iota$ acts on 
$\mathrm{Def}(M^3_+, \Delta_+)$ as
\begin{equation}
	\label{iotaacts}
	\iota_*((z_{i,j},w_{i,j}))=(\frac{1}{\overline{w_{i,j}}}, 
\frac{1}{\overline{z_{i,j}}}).
	\end{equation} 
\end{lemma}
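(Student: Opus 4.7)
The plan is to reduce the computation to the transformation law for the edge invariant of a single ideal tetrahedron under an orientation-reversing isometry of $\mathbb{H}^3$. This law was already worked out in the proof of the preceding proposition: in the upper half space model, complex conjugation $c$ sends an ideal tetrahedron with vertices $0, 1, \infty, z$ to one whose corresponding edge invariant becomes $1/\overline{z}$. Because every orientation-reversing isometry of $\mathbb{H}^3$ differs from $c$ by an orientation-preserving one, and the latter leaves edge invariants unchanged, the same rule $z \mapsto 1/\overline{z}$ governs the effect of any orientation-reversing identification between ideal tetrahedra.

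Next I would describe the action of $\iota$ combinatorially on $\hat{\Delta}$. The involution permutes the tetrahedra by exchanging each lift $A_i$ with its partner $\iota(A_i)$, and by construction the restriction $\iota|_{A_i}\colon A_i \to \iota(A_i)$ is orientation-reversing with respect to the coherent orientation of $\hat{N}$. Pulling back the hyperbolic structure via $\iota$ therefore replaces the shape of $A_i$ by the mirror image (in the sense of the previous paragraph) of the original shape of $\iota(A_i)$, and vice versa. With the indexing convention that $w_{i,j}$ is the invariant of the edge $\iota(e_{i,j}) \subset \iota(A_i)$, the first paragraph gives the new edge invariant of $e_{i,j} \subset A_i$ as $1/\overline{w_{i,j}}$, and symmetrically the new invariant of $\iota(e_{i,j}) \subset \iota(A_i)$ is $1/\overline{z_{i,j}}$. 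Assembling these equalities yields the formula \eqref{iotaacts}.

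The only delicate point is the bookkeeping of edge labels and orientations on the lifts: one must verify that the labelling conventions pair $e_{i,j}$ with $\iota(e_{i,j})$ through a common index, and that the coherent orientation chosen on $\iota(A_i)$ is the one inherited from $\hat{N}$, so that $\iota$ really acts by $z \mapsto 1/\overline{z}$ between correspondingly labelled edges rather than up to a tetrahedron relation. I expect this to be a routine unpacking of definitions rather than a genuine obstacle, but it is the one place where care is required.
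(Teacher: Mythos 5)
Your proposal is correct and follows essentially the same route as the paper: the paper's own proof likewise combines the fact that $\iota$ permutes the tetrahedra (swapping $A_i$ with $\iota(A_i)$) with the rule that an orientation-reversing isometry sends an edge invariant $z$ to $1/\overline{z}$, as established in the proof of the preceding proposition. Your version merely spells out the bookkeeping that the paper leaves implicit.
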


\begin{proof}
The proof follows easily from the fact that $\iota$ permutes the edges and, for 
$e_{i,j}\in A_i$ with invariant 
$z(e_{i,j})$, the edge invariant of $c(e_{i,j}) \in c(A_i)$ is 
$\frac{1}{\overline{z(e_{i,j})}}$, where $c$ is the Poincar\'e extension of the complex 
conjugation.
\end{proof}

\begin{remark} Metrics on tetrahedra are considered up to isotopy.
\end{remark}

\begin{corollary}
\label{coro:isomorphism_def_orientation_cover}

The map $(z_{i,j}) \in \mathrm{Def}(M^3,\Delta) \longmapsto (z_{i,j}, 
1/\overline{z_{i,j}})\in \mathrm{Def}(M^3_+, \Delta_+)^\iota$ is a real 
analytic isomorphism.
\end{corollary}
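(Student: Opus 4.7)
The strategy is to verify the map is a well-defined bijection onto the $\iota$-fixed locus with inverse given by projection onto the $z$-coordinates, and then observe that both directions are real analytic.

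First I would check that the forward map $(z_{i,j}) \mapsto (z_{i,j}, 1/\overline{z_{i,j}})$ lands in the fixed locus. Applying \eqref{iotaacts} to $(z_{i,j}, 1/\overline{z_{i,j}})$ yields $(1/\overline{1/\overline{z_{i,j}}}, 1/\overline{z_{i,j}}) = (z_{i,j}, 1/\overline{z_{i,j}})$, so the image is indeed $\iota_*$-fixed. Next I would verify that $(z_{i,j}, 1/\overline{z_{i,j}})$ lies in $\mathrm{Def}(\hat{N}, \hat{\Delta})$: the tetrahedron relations on $\iota(A_i)$ are inherited, since $1/\overline{z_{i,j}}$ is precisely the edge invariant of the mirror image tetrahedron, which satisfies the same relations; and around any lift to $\hat{N}$ of an edge cycle $[e] \subset N$, Remark~\ref{rmk_non-orientable_equation} identifies the orientable edge equation \eqref{equation_edge_orientable} with the non-orientable equation \eqref{compeq1} for $[e]$, which holds by hypothesis on $(z_{i,j})$.

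For the inverse, given an $\iota$-fixed $(z_{i,j}, w_{i,j}) \in \mathrm{Def}(\hat{N}, \hat{\Delta})$, the identity $\iota_*((z_{i,j}, w_{i,j})) = (z_{i,j}, w_{i,j})$ combined with \eqref{iotaacts} forces $w_{i,j} = 1/\overline{z_{i,j}}$, so projection to the $z$-coordinates is a set-theoretic inverse to the forward map. The projected point lies in $\mathrm{Def}(N, \Delta)$ because the tetrahedron relations descend trivially, and Remark~\ref{rmk_non-orientable_equation} identifies the orientable compatibility equation around any lift of $[e]$ in $\hat{\Delta}$ with the non-orientable equations \eqref{compeq1} and \eqref{compeq2} for $[e]$ in $N$.

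Real analyticity is immediate in both directions: the map $z \mapsto (z, 1/\overline{z})$ is real analytic wherever $z$ is bounded away from $0$, which holds throughout a sufficiently small neighborhood of the complete structure, and the projection inverse is linear. The main obstacle, as far as I can see, is the combinatorial bookkeeping: one must carefully match each edge cycle of $\hat{\Delta}$ to its $\iota$-image cycle and check that the single equation \eqref{compeq1} in $N$ simultaneously encodes the orientable equation around a lift and around its $\iota$-image, via the substitution $w_{i,j} = 1/\overline{z_{i,j}}$. This matching is essentially the content of Remark~\ref{rmk_non-orientable_equation}, so the proof reduces to a careful application of it.
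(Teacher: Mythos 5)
Your proposal is correct and follows essentially the same route as the paper: the paper's proof is a one-line citation of Lemma~\ref{lemma_iotaacts} (which gives that the fixed locus is exactly $w_{i,j}=1/\overline{z_{i,j}}$) and Remark~\ref{rmk_non-orientable_equation} (which matches the compatibility equations upstairs and downstairs), and your write-up simply carries out those two verifications explicitly together with the routine real-analyticity check. No gaps.
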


\begin{proof} It follows from Remark~\ref{rmk_non-orientable_equation} and Lemma~\ref{lemma_iotaacts}.
\end{proof}

Our goal is to use Corollary~\ref{coro:isomorphism_def_orientation_cover} and Theorem~\ref{thm:neumannzagier-thurston} 
in order to identify the deformation space of 
$M^3$ with the fixed points under an action on $\mathbb{C}^k$. Let us suppose for 
the time being that $M^3$ has only one cusp which is non-orientable. The section 
of this cusp must be a Klein bottle. In order to define the bi-holomorphism 
through generalized Dehn filling coefficients we must first fix a longitude-meridian pair in the 
peripheral torus in the orientation covering $M^3_+$. As we will see, there is 
a canonical choice. Afterwards, following Thurston, we will compute the derivative of the holonomy, 
$\mathrm{hol'}$, and translate the action of $\iota$ over there and finally, to 
the generalized Dehn filling coefficients.

\bigskip

\noindent \textbf{Fixing a longitude-meridian pair.} Let $K^2$ be Klein bottle, 
its fundamental group admits a presentation 
$$\pi_1(K^2)=\langle a, b | aba^{-1}=b^{-1}\rangle $$
The elements $a^2, b$ in the orientation covering $T^2$ are generators of 
$\pi_1(T^2)$ and are represented by the unique homotopy classes of loops in the 
orientation covering that are invariant by the deck transformation (as 
unoriented curves). From now on, we will choose as longitude-meridian pair 
the elements:
\begin{align*}
 l&:=a^2, \\ m&:=b.
\end{align*}

\begin{Definition}
\label{def:distiguished}
The previous generators of $\pi_1(T^2)$ are called \emph{distinguished} 
elements.
\end{Definition}

\begin{lemma}
Let $[\alpha] \in \pi_1(T)$, let $\iota$ be the involution in the orientation 
covering $M^3_+$, that is, $M^3\cong M^3_+/\iota$. We also denote by $\iota$ 
the restriction of $\iota$ to the peripheral torus $T$. If 
$$\mathrm{hol}'(\alpha)=\prod_{r\in I}z(e_{i_r,j_r})^{\epsilon_r} \prod_{s\in J} 
w(\iota(e_{i_s,j_s}))^{\epsilon_s},$$
where $\epsilon_r, \epsilon_s \in \{\pm 1\} $, then 
$$\mathrm{hol'}(\iota(\alpha))=\prod_{r \in 
I}w(\iota(e_{i_r,j_r}))^{-\epsilon_r} \prod_{s\in J} 
z(e_{i_s,j_s})^{-\epsilon_s}.$$
\end{lemma}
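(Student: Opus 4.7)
The plan is to apply Thurston's combinatorial recipe for the derivative of the holonomy of a peripheral loop and track how both the edge labels and the exponents transform under $\iota$. Concretely, for the end of $\hat N$ carrying $\alpha$, the cross-section torus $T$ inherits a triangulation from $\hat\Delta$ whose vertices are in bijection with the edges of tetrahedra meeting that cusp; the edge invariant attached to a vertex coming from $e_{i,j}$ is $z(e_{i,j})$, and at a vertex coming from $\iota(e_{i,j})$ it is $w(\iota(e_{i,j}))$. After homotoping $\alpha$ to a closed polygonal curve transverse to the $1$-skeleton, $\mathrm{hol}'(\alpha)$ is the product of these edge invariants taken over the vertices that $\alpha$ winds around, raised to $+1$ or $-1$ according to the sense of winding relative to a chosen orientation of $T$. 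The expression in the statement is exactly this read-off, with the index set $I$ collecting the vertices sitting in tetrahedra $A_{i_r}$ and $J$ those sitting in the mirror tetrahedra $\iota(A_{i_s})$.

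Next I will push the recipe forward through $\iota$. Its restriction to $T$ is a cellular involution that swaps the vertex coming from $e_{i,j}$ with the one coming from $\iota(e_{i,j})$, simultaneously swapping the attached invariants $z(e_{i,j})$ and $w(\iota(e_{i,j}))$. Hence $\iota(\alpha)$ winds around precisely the image vertices, and the $z$ and $w$ factors in the formula are exchanged as in the statement. The remaining ingredient is orientation: since $\iota$ reverses the orientation of the cross-section torus, every positive winding of $\alpha$ around a vertex becomes a negative winding of $\iota(\alpha)$ around the image vertex, realising the substitutions $\epsilon_r \mapsto -\epsilon_r$ and $\epsilon_s \mapsto -\epsilon_s$. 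Combining the two effects produces the claimed formula.

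The main obstacle I anticipate is book-keeping of orientation conventions: one has to ensure that the sign reversal enters exactly once rather than being silently absorbed elsewhere. In particular, the invariants $w(\iota(e_{i,j}))$ are defined with respect to a fixed orientation on $\hat N$, so that $\iota$ acts only on the underlying simplex and not on the labeling; under this convention, as already used in Lemma~\ref{lemma_iotaacts}, all of the orientation reversal on $T$ is funnelled into the exponent flip above, and the argument closes cleanly without double compensation.
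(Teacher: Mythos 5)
Your proposal is correct and follows essentially the same route as the paper: both compute $\mathrm{hol}'$ via Thurston's developing of the cusp triangulation in $\mathbb{C}$ and track how each corner's contribution transforms under $\iota$, the paper summarizing in Figure~\ref{fig:action} exactly the combined effect (label swap $z\leftrightarrow w$ plus exponent flip from orientation reversal) that you spell out in two separate steps. Your explicit handling of the convention that $w(\iota(e_{i,j}))$ is computed in the fixed orientation of $\hat N$, so the sign reversal enters exactly once, is a welcome clarification of what the paper leaves to the figure.
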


When we compute the derivative of the holonomy of an element, $\mathrm{hol}'(\gamma)$, we assume that $\mathrm{hol}(\gamma)$ fixes 
$\infty$.

\begin{proof} We use Thurston's method for computing the holonomy 
through the developing of triangles in $\mathbb{C}$ (see \cite{ThurstonNotes}). 
Thus, the factor that each piece of path adds to the derivative of the holonomy 
changes as in Figure~\ref{fig:action} under the action of $\iota$. 
\end{proof}

\begin{figure}[h!]
	\centering
	\def\svgwidth{0.5\textwidth}
	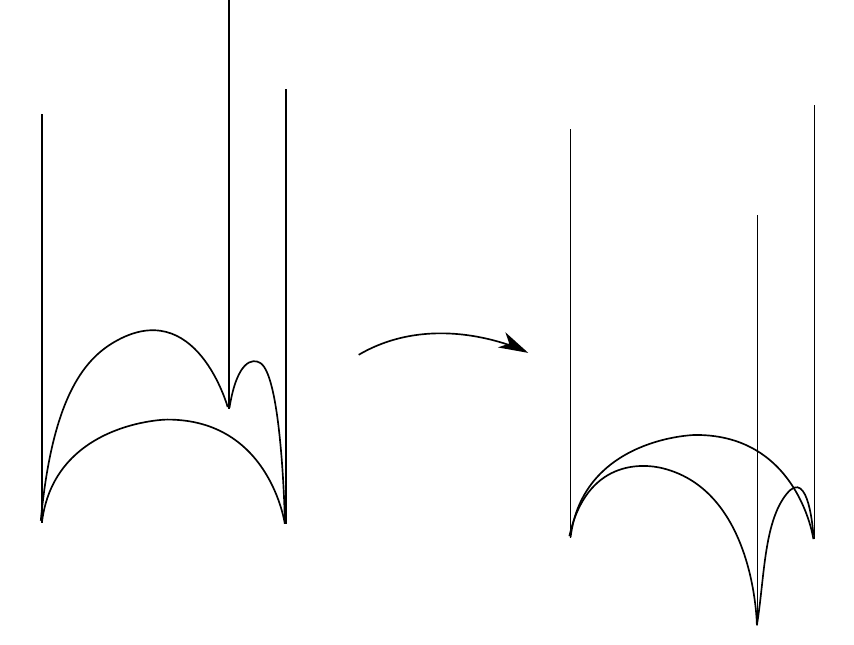
	\caption{Change under the action of $\iota$.}
	\label{fig:action}
\end{figure}

\begin{proposition}
\label{propiotaaccts2}
For the chosen longitude-meridian pair, the action of $\iota$ on 
$\mathrm{Im}(\mathrm{hol'})\subset \mathbb{C}^2$  is
\begin{equation}
\label{iotaacts2}
\iota_*(L,M)=(\overline{L}, \overline{M}^{-1}),
\end{equation}
where $L=\mathrm{hol'}(l)$, $M=\mathrm{hol'}(m)$.
\end{proposition}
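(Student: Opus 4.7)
The plan is to combine the previous lemma (which describes how $\mathrm{hol}'(\iota(\alpha))$ arises from $\mathrm{hol}'(\alpha)$ by a substitution of edge invariants) with Lemma~\ref{lemma_iotaacts} (which describes the $\iota$-action on the deformation parameters by $(z,w)\mapsto (1/\overline{w},1/\overline{z})$) in order to express $\iota_*\mathrm{hol}'$ geometrically, and then specialize to the distinguished generators $l=a^2$ and $m=b$.

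First, I would record the universal identity
\[
\iota_*\bigl(\mathrm{hol}'(\alpha)\bigr)=\overline{\mathrm{hol}'(\iota(\alpha))}
\]
valid for every $\alpha\in\pi_1(T)$. To see this, by the previous lemma, writing $\mathrm{hol}'(\alpha)$ as a monomial $\prod z_r^{\epsilon_r}\prod w_s^{\epsilon_s}$ gives $\mathrm{hol}'(\iota(\alpha))=\prod w_r^{-\epsilon_r}\prod z_s^{-\epsilon_s}$. On the other hand, applying the substitution of Lemma~\ref{lemma_iotaacts} to the monomial for $\mathrm{hol}'(\alpha)$ yields $\prod(1/\overline{w_r})^{\epsilon_r}\prod(1/\overline{z_s})^{\epsilon_s}$, which is precisely the complex conjugate of $\mathrm{hol}'(\iota(\alpha))$.

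Next, I would compute the action of $\iota$ on the peripheral $\pi_1$. Because $T$ is the orientation double cover of the Klein bottle section of the cusp, the deck involution on $\pi_1(T)\subset\pi_1(K^2)=\langle a,b\mid aba^{-1}=b^{-1}\rangle$ is conjugation by $a$. Hence
\[
\iota(l)=a\cdot a^2\cdot a^{-1}=a^2=l,\qquad \iota(m)=aba^{-1}=b^{-1}=m^{-1}.
\]
Substituting these into the universal identity yields
\[
\iota_*(L)=\overline{\mathrm{hol}'(l)}=\overline{L},\qquad \iota_*(M)=\overline{\mathrm{hol}'(m^{-1})}=\overline{M}^{-1},
\]
which is the claim.

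The main subtlety I expect is keeping straight the two distinct $\iota$-actions at play: the geometric involution acts on loops in $T$, while the induced action on the deformation space is on the edge-invariant parameters $(z_{i,j},w_{i,j})$, and the latter mixes in a complex conjugation. The universal identity above is exactly the bookkeeping that reconciles them; once it is in place, the rest is just reading off the images of the distinguished generators from the Klein bottle presentation.
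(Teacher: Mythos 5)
Your proof is correct and follows essentially the same route as the paper: both rest on the action $\iota_*(l)=l$, $\iota_*(m)=m^{-1}$ combined with the preceding lemma and the substitution of Lemma~\ref{lemma_iotaacts}. Your packaging of the computation as the single identity $\iota_*\bigl(\mathrm{hol}'(\alpha)\bigr)=\overline{\mathrm{hol}'(\iota(\alpha))}$ is just a clean reorganization of what the paper does by writing out the monomial forms of $\mathrm{hol}'(l)$ and $\mathrm{hol}'(m)$ explicitly.
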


\begin{proof}
 The action of $\iota$ on the longitude-meridian pair is $\iota_*(l)=l$, 
$\iota_*(m)=m^{-1}$. Hence, the previous lemma implies that the derivative holonomy of 
the longitude and the meridian has the following features:
\begin{gather}
\mathrm{hol'}(m)=\prod_{r\in I}z(e_{i_r,j_r})^{\epsilon_r} \prod_{s\in J} 
w(\iota(e_{i_s,j_s}))^{\epsilon_s}=\prod_{r \in 
I}w(\iota(e_{i_r,j_r}))^{\epsilon_r} \prod_{s\in J} z(e_{i_s,j_s})^{\epsilon_s}, 
\\
\mathrm{hol'}(l)=\prod_{r\in I } (z(e_{i_r,j_r}) 
w(\iota(e_{i_r,j_r}))^{-1})^{\epsilon_r}.
\end{gather}

\end{proof}

\begin{remark}
\label{rmk:hol_lm} Following the notation of Proposition~\ref{propiotaaccts2}, 
$(L,M) \in (\mathbb{C}^2)^\iota$ if and only if $L\in \mathbb{R}$, $|M|=1$.
\end{remark}

Let us denote by $u:=\log \mathrm{hol'}(l)$, $v:=\log \mathrm{hol'}(m)$, the \emph{generalized
Dehn coefficients} are the solutions in $\mathbb{R}^2 \cup \{\infty \}$ to 
 \emph{Thurston's equation}
\begin{equation}
\label{dehn_equation}
pu+qv=2\pi i.
\end{equation}
Indeed, Neumann-Zagier Theorem~\ref{thm:neumannzagier-thurston} (see also~\cite{ThurstonNotes})
states that, for $M^3$ orientable, the map $(z_{i,j}) \in \mathrm{Def}(M^3, \Delta) 
\mapsto (p_k,q_k)$ is a bi-holomorphism, and the image is a neighborhood of 
$(\infty, \cdots, \infty)\in \overline{\mathbb{C}}^l$, where $l$ is the 
number of cusps of $M^3$.
 
\begin{proposition}
The action of $\iota$ on $(p,q) \in U\cap \mathbb{R}^2\cup \{\infty\}$, where 
$(p,q)$ are the generalized Dehn coefficients, is
\begin{equation}
\label{iotaacts4}
\iota_*(p,q)=(-p,q).
\end{equation}
\end{proposition}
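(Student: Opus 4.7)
The plan is to push the transformation rule of Proposition~\ref{propiotaaccts2} for $(L,M)=(\mathrm{hol}'(l),\mathrm{hol}'(m))$ through the logarithm to obtain the action on $(u,v)$, and then extract the action on $(p,q)$ from the defining relation $pu+qv=2\pi i$.

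First, I would pass from $L,M$ to $u=\log L$, $v=\log M$, choosing the branches that vanish at the complete structure (where $L=M=1$). Since $\overline{\log L}=\log\overline{L}$ and $\log\overline{M}^{-1}=-\overline{\log M}$ on these branches, Proposition~\ref{propiotaaccts2} becomes
\[
\iota_*(u,v)=(\overline{u},-\overline{v}).
\]
Next, let $(p',q'):=\iota_*(p,q)\in\mathbb{R}^2\cup\{\infty\}$. By definition of the Dehn coefficients at $\iota(\sigma)$,
\[
p'\,\overline{u}-q'\,\overline{v}=2\pi i.
\]
On the other hand, complex-conjugating Thurston's equation $pu+qv=2\pi i$ and using that $p,q$ are real gives
\[
-p\,\overline{u}-q\,\overline{v}=2\pi i.
\]
Since $u,v$ are local (holomorphic) coordinates near the complete structure by Theorem~\ref{thm:neumannzagier-thurston}, the pair $(\overline{u},\overline{v})$ is $\mathbb{R}$-linearly independent away from the complete structure, so these two expressions can agree only if $p'=-p$ and $q'=q$, which is exactly \eqref{iotaacts4}. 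The case $(p,q)=\infty$, corresponding to the complete structure, is fixed by $\iota_*$ in a trivial way.

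The only potentially delicate point is the justification that $(u,v)\mapsto (p,q)$ is well-defined and injective on the real locus: for small generic $(u,v)$ the single complex equation $pu+qv=2\pi i$ has a unique real solution, and the degenerate case $u=v=0$ is precisely the $\infty$ point of $\mathbb{R}^2\cup\{\infty\}$. So the only real work is the branch computation, after which the conclusion is immediate from matching linear forms in $\overline{u},\overline{v}$.
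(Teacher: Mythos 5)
Your proposal is correct and follows essentially the same route as the paper: translate the action $\iota_*(L,M)=(\overline{L},\overline{M}^{-1})$ through the logarithm to get $\iota_*(u,v)=(\overline{u},-\overline{v})$, then solve $p'\overline{u}-q'\overline{v}=2\pi i$ for $(p',q')$. The paper leaves the last step as ``straightforward to check,'' while you carry it out explicitly by conjugating Thurston's equation and matching real-linear forms, which is a welcome (and correct) elaboration.
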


\begin{proof}

The action of $\iota$ can be translated through the logarithm to $(u,v)$ from 
the action on the holonomy \eqref{iotaacts2} as 
$\iota_*(u,v)=(\overline{u},-\overline{v})$. Then, to find the action on generalized
Dehn coefficients, we have to solve Thurston's equation \eqref{dehn_equation} with 
$\overline{u}$ and $-\overline{v}$, that is,
\begin{equation}
p'\overline{u}-q'\overline{v}=2\pi i,
\end{equation}
where $\iota_*(p,q)=(p',q')\in \mathbb{R}^2\cup \{\infty \}$. It is 
straightforward to check that $(p',q')=(-p,q)$ is the solution.
\end{proof}

\begin{corollary}
The fixed points under $\iota$, which are in correspondence with 
$\mathrm{Def}(M^3, \Delta)$, are those whose generalized Dehn filling coefficients are of type 
$(0,q)$.
\end{corollary}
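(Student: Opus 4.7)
The plan is to assemble directly the three ingredients that have just been established and read off the conclusion. First, the earlier corollary identifies $\mathrm{Def}(N,\Delta)$ with the fixed-point set $\mathrm{Def}(\hat N,\hat\Delta)^\iota$. Next, the Neumann–Zagier theorem, Theorem~\ref{thm:neumannzagier-thurston}, gives a bi-holomorphism between $\mathrm{Def}(\hat N,\hat\Delta)$ and a neighborhood of $\infty$ in $\overline{\mathbb C}$ via the generalized Dehn filling coefficients $(p,q)$ associated to the distinguished longitude-meridian pair (Definition~\ref{def:distiguished}). Finally, the preceding proposition computes the action of $\iota$ through this identification as $\iota_*(p,q)=(-p,q)$.

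Putting these together, I would argue that a point of $\mathrm{Def}(\hat N,\hat\Delta)$ lies in the image of $\mathrm{Def}(N,\Delta)$ if and only if its Dehn coefficients satisfy $(p,q)=(-p,q)$, equivalently $p=0$. This is a purely algebraic manipulation on the $(p,q)$-side once the equivariance of the Neumann–Zagier bi-holomorphism with respect to $\iota$ is invoked.

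The one point that requires a small amount of care, and that I would treat explicitly, is the behavior at the complete structure, which corresponds to $(p,q)=(\infty,\infty)$. Here I would note that the action $(p,q)\mapsto(-p,q)$ extends continuously to $\overline{\mathbb C}$ and fixes $(\infty,\infty)$, so the complete structure is indeed a fixed point and lies in the closure of the locus $\{p=0\}$; this is consistent with the complete structure being visible in $\mathrm{Def}(N,\Delta)$. Away from $\infty$, the fixed-point equation $-p=p$ forces $p=0$ with $q\in\mathbb R$ arbitrary (in the small neighborhood under consideration).

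The main obstacle, if any, is not a genuine difficulty but rather bookkeeping: one must be sure that the Neumann–Zagier parameterization is $\iota$-equivariant, i.e.\ that the same $(p,q)$ coordinates that realize the bi-holomorphism for $\hat N$ intertwine the combinatorial action on $\mathrm{Def}(\hat N,\hat\Delta)$ (given by Lemma~\ref{lemma_iotaacts}) with the action on coefficients given by Proposition~\ref{propiotaaccts2}. This equivariance has already been secured by the chain of translations: combinatorial action $\leadsto$ action on $(L,M)=(\mathrm{hol}'(l),\mathrm{hol}'(m))$ in \eqref{iotaacts2} $\leadsto$ action on $(u,v)=(\log L,\log M)$ $\leadsto$ action on $(p,q)$ via Thurston's equation \eqref{dehn_equation}. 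So the proof reduces to citing these three steps and solving $(-p,q)=(p,q)$.
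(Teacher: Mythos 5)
Your proposal is correct and follows essentially the same route as the paper: the corollary is an immediate consequence of the preceding proposition $\iota_*(p,q)=(-p,q)$ together with the identification of $\mathrm{Def}(N,\Delta)$ with the $\iota$-fixed locus and the Neumann--Zagier parameterization, and solving $(-p,q)=(p,q)$ gives $p=0$. Your extra remarks on the point at infinity and on the equivariance of the chain of identifications are consistent with the paper's setup and do not change the argument.
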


\begin{theorem}
	\label{deformation}
	Let $M^3$ be a connected, complete, non-orientable, hyperbolic 3-manifold 
of finite volume. Let $M^3$ have $k$ non-orientable cusps and $l$ orientable ones 
and let it admit an ideal triangulation $\Delta$. Then $\mathrm{Def}(M^3,\Delta)$ 
is real bi-analytic  to an open set of $\mathbb{R}^{k+2l}$.
\end{theorem}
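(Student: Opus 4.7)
The strategy is to reduce to the orientable case via the orientation double cover $\hat N \to N$, and then identify $\mathrm{Def}(N,\Delta)$ with the fixed-point set of the involution $\iota$ acting on $\mathrm{Def}(\hat N,\hat\Delta)$. First I would verify the cusp count: each non-orientable end of $N$ is a Klein bottle and lifts to a single torus in $\hat N$, while each orientable (torus) end lifts to two tori swapped by $\iota$. Hence $\hat N$ has $k+2l$ (orientable) cusps, and Theorem~\ref{thm:neumannzagier-thurston} yields a bi-holomorphism between $\mathrm{Def}(\hat N,\hat\Delta)$ and an open subset of $\mathbb{C}^{k+2l}$ via the generalized Dehn filling coefficients, one pair $(p_j,q_j)\in \overline{\mathbb C}$ per cusp of $\hat N$. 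By the Corollary preceding Definition~\ref{def:distiguished}'s paragraph, $\mathrm{Def}(N,\Delta)$ is real bi-analytically isomorphic to $\mathrm{Def}(\hat N,\hat \Delta)^\iota$, so what remains is to describe the $\iota$-action on $\mathbb{C}^{k+2l}$ and read off the fixed set.

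Next I would analyze the action cusp by cusp. For each non-orientable cusp of $N$, the analysis in the one-cusp case above applies verbatim (after choosing the distinguished longitude-meridian pair in the unique torus of $\hat N$ above it): Proposition~\ref{propiotaaccts2} and the subsequent proposition give $\iota_*(p_j,q_j)=(-p_j,q_j)$, whose fixed set in a neighborhood of $\infty$ is a real line $\{0\}\times \mathbb{R}\cup\{\infty\}$. For each orientable cusp of $N$, $\iota$ swaps the two lifted tori $T_j',T_j''\subset\hat N$; choosing a meridian-longitude pair on $T_j'$ and transporting it via $\iota$ to a meridian-longitude pair on $T_j''$, the same computation as in the lemma on $\mathrm{hol}'(\iota(\alpha))$ shows that the action on the two Dehn pairs is a swap, possibly composed with complex conjugation, of the form $(p_j',q_j',p_j'',q_j'')\mapsto (p_j'',q_j'',p_j',q_j')$. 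In either case the fixed set of this swap is the real-analytic diagonal, parameterized freely by a single pair in $\mathbb{R}^2\cup\{\infty\}$.

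Putting the contributions together, the fixed-point set of $\iota$ inside $\mathrm{Def}(\hat N,\hat\Delta)\simeq U\subset \mathbb C^{k+2l}$ is cut out, on each factor $\overline{\mathbb C}$ corresponding to a non-orientable cusp of $N$, by the real-analytic condition $p=0$ (contributing one real parameter), and, on each pair of factors corresponding to an orientable cusp of $N$, by the diagonal condition (contributing two real parameters). Hence the total fixed locus is a real-analytic submanifold of real dimension $k+2l$, parameterized bi-analytically by an open subset of $\mathbb{R}^{k+2l}$, and via the corollary this open subset inherits the structure of $\mathrm{Def}(N,\Delta)$.

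The main obstacle I anticipate is the second step: verifying that for an orientable cusp of $N$ the involution $\iota$ really does interchange the two lifted tori as described, and writing down meridian-longitude choices on the two tori so that the action on Dehn coefficients takes the simple swap form above. This amounts to checking compatibility between the combinatorial action \eqref{iotaacts} on $\mathrm{Def}(\hat N,\hat\Delta)$ and the analogue of Proposition~\ref{propiotaaccts2} for an orientable cusp, and requires some care with orientation conventions, but it is parallel to the non-orientable case already treated.
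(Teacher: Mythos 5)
Your plan follows the paper's proof essentially verbatim: pass to the orientation cover, apply Theorem~\ref{thm:neumannzagier-thurston} there, identify $\mathrm{Def}(N,\Delta)$ with the fixed locus of $\iota$, and compute the action on generalized Dehn coefficients cusp by cusp, getting one real parameter per non-orientable cusp and two per orientable one. The only discrepancy is at the step you yourself flag: the paper computes the action over an orientable cusp to be $(p_1,q_1,p_2,q_2)\mapsto -(p_2,q_2,p_1,q_1)$, so the fixed set is the anti-diagonal $(p,q,-p,-q)$ rather than the plain diagonal, which does not affect the dimension count.
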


\begin{proof}
 We have already proved the theorem for $k=1, l=0$. 

Let $k=0, l=1$. Any peripheral torus on $M^3$ is lifted to two peripheral tori, 
$T_1, T_2$, on $M^3_+$. The action of $\iota$ here is by permutation. More 
precisely, when can fix any longitude-meridian pair in one, $l_1, m_1 \in 
\pi_1(T_1)$, and choose the longitude-meridian pair in the second torus as 
$l_2:=\iota_*(l_1), m_2:=\iota_*(m_1) \in \pi_1(T_2)$. The same arguments as in 
Proposition~\ref{propiotaaccts2} show that $\iota_*(p_1,q_1, p_2, q_2) 
=-(p_2,q_2,p_1,q_1)$ and, hence, the fixed points have generalized coefficients 
$(p,q,-p,-q)$, $p,q \in \mathbb{R}$.

Finally, in general the action of $\iota$ on $\mathrm{Im}(\mathrm{hol'})\subset 
\mathbb{C}^{k+2l}$ can be understood as a product of $k+l$ actions, 
$\iota_1\times \cdots \times \iota_l$, the first $k$, $\iota_i, i=1, \cdots , k$ 
acting on $\mathbb{C}$ as in the case for a Klein bottle cusp, and the 
subsequent $l$, $\iota_j, j=k+1, \cdots , k+l$, acting on $\mathbb{C}^2$ as in 
the case for a peripheral torus. 
\end{proof}

\section{Varieties of representations}
\label{S:VarReps}
The group of isometries of hyperbolic space is denoted by $G$, and we have the following well known isomorphisms:
$$
 G=\mathrm{Isom}(\mathbb{H}^3)\cong\mathrm{PO}(3,1)\cong\mathrm{PSL}(2,\mathbb{C})\rtimes \mathbb{Z}_2,
$$
which we will often consider in order to identify elements of $G$ with elements of $\mathrm{PSL}(2,\mathbb{C})\rtimes \mathbb{Z}_2$. The group $G$ has two connected components, according to whether the isometries preserve or reverse the orientation.

For a finitely generated group $\Gamma$, the variety of representations of $\Gamma$ in $G$ is denoted by
$$
\hom(\Gamma, G).
$$
As $G$ is algebraic, it has a natural structure of algebraic set, cf Johnson--Millson \cite{JohnsonMillson}, 
but we  consider only its topological structure.
We are interested in the set of conjugacy classes of representations: 
$$
\mathcal{R}(\Gamma,G)=\hom(\Gamma, G)/G.
$$
When $M^3$ is hyperbolic, we write $\Gamma=\pi_1(M^3)$. 
The holonomy of $M^3$ $$\mathrm{hol}\colon\Gamma\to G$$
is well defined up to conjugacy, hence $[\mathrm{hol}]\in \mathcal{R}(\Gamma,G)$.
To understand deformations, we analyze a neighborhood of the holonomy in 
$\mathcal{R}(\Gamma,G)$. 
The main result of this section is:

\begin{Theorem}
\label{Thm:dimdef}
Let $M^3$   be a hyperbolic manifold of finite volume. Assume that it has $k$ non-orientable cusps and $l$
orientable cusps. Then there exists a neighborhood of $[\mathrm{hol}]$ in  $\mathcal{R}(\Gamma,G)$ homeomorphic
to $\mathbb{R}^{k+2l}$. 
 \end{Theorem}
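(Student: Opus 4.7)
The plan is to reduce to the orientable case via the orientation cover and to identify a neighborhood of $[\mathrm{hol}]$ in $\mathcal{R}(\Gamma,G)$ with the fixed point set of a natural involution on the variety of representations of the cover. Let $\pi\colon \hat M\to M^3$ be the orientation cover, $\iota$ its deck involution, and $\hat\Gamma=\pi_1(\hat M)\triangleleft\Gamma$ of index two. Each non-orientable cusp of $M^3$ has a Klein bottle section and lifts to a single torus cusp of $\hat M$, while each orientable cusp lifts to two, so $\hat M$ is orientable of finite volume with exactly $k+2l$ cusps. By the classical Thurston--Kapovich theorem \cite{KapovichBook}, a neighborhood $\hat U$ of $[\mathrm{hol}|_{\hat\Gamma}]$ in $\mathcal{R}(\hat\Gamma,\mathrm{PSL}(2,\mathbb{C}))$ is bi-analytic to an open subset of $\mathbb{C}^{k+2l}$. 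Fixing $\tau\in\Gamma\setminus\hat\Gamma$, conjugation $c_\tau$ gives an outer automorphism of $\hat\Gamma$ (well defined modulo $\mathrm{Inn}(\hat\Gamma)$) and hence a continuous involution $\iota_*\colon[\hat\rho]\mapsto[\hat\rho\circ c_\tau]$ on $\mathcal{R}(\hat\Gamma,\mathrm{PSL}(2,\mathbb{C}))$.

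The central step is to show that the restriction map $\mathrm{res}\colon\mathcal{R}(\Gamma,G)\to\mathcal{R}(\hat\Gamma,\mathrm{PSL}(2,\mathbb{C}))$ is a local homeomorphism near $[\mathrm{hol}]$ onto $\hat U^{\iota_*}$. Its image lies in $\hat U^{\iota_*}$ because $\rho(\tau)$ realizes $c_\tau$ by conjugation. Conversely, given $[\hat\rho]\in\hat U^{\iota_*}$, an extension of $\hat\rho$ to $\Gamma$ is encoded by an element $A\in G\setminus\mathrm{PSL}(2,\mathbb{C})$ with
\[
A\,\hat\rho(\gamma)\,A^{-1}=\hat\rho(c_\tau\gamma)\text{ for all }\gamma\in\hat\Gamma,\qquad A^2=\hat\rho(\tau^2).
\]
Near the holonomy $\hat\rho$ is irreducible, so Schur's lemma in $\mathrm{PSL}(2,\mathbb{C})$ forces the first equation to have a unique solution $A$ whenever one exists. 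The element $A^{-2}\hat\rho(\tau^2)$ then centralizes the irreducible image and is thus trivial, so the second equation is automatic. Real-analytic dependence of $A$ on $\hat\rho$ follows from the implicit function theorem, yielding the desired local homeomorphism.

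It remains to identify $\hat U^{\iota_*}$ with $\mathbb{R}^{k+2l}$. Under the Thurston--Kapovich parametrization of $\hat U$ by (generalized) Dehn-filling coefficients of the cusps of $\hat M$, the action of $\iota_*$ splits cusp by cusp into the actions computed in Section~\ref{S:CombinatorialDef}: on the parameter of a cusp over a non-orientable cusp of $M^3$ it acts as $(p,q)\mapsto(-p,q)$, with $1$-dimensional real fixed set, and on the pair of parameters over an orientable cusp as $(p_1,q_1,p_2,q_2)\mapsto-(p_2,q_2,p_1,q_1)$, with $2$-dimensional real fixed set. Bochner linearization of the real-analytic involution $\iota_*$ at the fixed point then identifies $\hat U^{\iota_*}$, after possibly shrinking, with a real-analytic submanifold of dimension $k+2l$, homeomorphic to $\mathbb{R}^{k+2l}$. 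The main obstacle of the proof is the extension step of the previous paragraph, which rests on the irreducibility of $\mathrm{hol}|_{\hat\Gamma}$ and the triviality of its centralizer in $\mathrm{PSL}(2,\mathbb{C})$; everything else is either classical or a direct translation of the computations already carried out in Section~\ref{S:CombinatorialDef}.
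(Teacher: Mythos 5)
Your overall strategy --- pass to the orientation cover, characterize the image of the restriction map as the fixed locus of an involution, and read off that locus in Thurston--Kapovich coordinates --- is the same as the paper's. But there is a genuine error in the central step: the involution you fix is the wrong one, off by a complex conjugation. You define $\iota_*[\hat\rho]=[\hat\rho\circ c_\tau]$ on $\mathcal{R}(\hat\Gamma,\mathrm{PSL}(2,\mathbb{C}))$ and claim the image of $\mathrm{res}$ lies in $\hat U^{\iota_*}$ ``because $\rho(\tau)$ realizes $c_\tau$ by conjugation.'' However $\rho(\tau)$ is orientation-reversing, hence not an element of $\mathrm{PSL}(2,\mathbb{C})$: writing $\rho(\tau)=B\circ c$ with $B\in\mathrm{PSL}(2,\mathbb{C})$ and $c$ complex conjugation, the identity $\rho(\tau)\hat\rho(\gamma)\rho(\tau)^{-1}=\hat\rho(c_\tau\gamma)$ gives $B\,\overline{\hat\rho(\gamma)}\,B^{-1}=\hat\rho(c_\tau\gamma)$, i.e.\ $[\hat\rho\circ c_\tau]=[\overline{\hat\rho}]$ in the $\mathrm{PSL}(2,\mathbb{C})$-quotient. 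Since $[\hat\rho]\neq[\overline{\hat\rho}]$ near the holonomy (this is Lemma~\ref{Lemma:notfixed}: an orientation-reversing isometry commuting with the holonomy would fix the whole limit set), the restricted holonomy is \emph{not} a fixed point of your $\iota_*$, and for no $[\hat\rho]$ near it does an orientation-reversing $A$ with $A\hat\rho(\gamma)A^{-1}=\hat\rho(c_\tau\gamma)$ exist. The correct locus is the fixed set of the \emph{anti-holomorphic} involution $[\hat\rho]\mapsto[\overline{\hat\rho\circ c_\tau}]$, which is exactly what Lemma~\ref{Lemma:liftres} records.

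This is not merely a notational slip: carried through literally it changes the answer. Your $\iota_*$ is holomorphic (precomposition by a group automorphism), and in the coordinates of Theorem~\ref{Thm:coordinates} it acts by $(x,y,z)\mapsto(x,z,y)$, whose fixed set is $\mathbb{C}^{k+l}$, of real dimension $2k+2l$, not $k+2l$. The real form $\mathbb{R}^k\times\mathbb{C}^l$ only appears after the extra conjugation is inserted, giving the conditions $x_i=\overline{x_i}$ and $z_j=\overline{y_j}$ as in Theorem~\ref{Thm:coordinatesNO}; your final paragraph silently uses this anti-holomorphic action (the computation $(p,q)\mapsto(-p,q)$ from Section~\ref{S:CombinatorialDef} already has the conjugation built in, since it comes from $\iota_*(u,v)=(\overline u,-\overline v)$), which is inconsistent with the involution you defined. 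A second, smaller gap: $\mathrm{res}$ does not obviously land in $\mathcal{R}(\hat\Gamma,\mathrm{PSL}(2,\mathbb{C}))$, since a $G$-conjugacy class of representations into $\mathrm{PSL}(2,\mathbb{C})$ determines a $\mathrm{PSL}(2,\mathbb{C})$-class only up to complex conjugation; one must choose a lift, which the paper does by first shrinking to a neighborhood $W$ with $W\cap\overline W=\emptyset$ (again via Lemma~\ref{Lemma:notfixed}). The remainder of your argument --- uniqueness of the extending element via triviality of the centralizer, and the automatic compatibility $A^2=\hat\rho(\tau^2)$ --- is sound and matches the paper's use of Johnson--Millson's good representations.
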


When $M^3$ is orientable this result is well known, see for instance Boileau--Porti or Kapovich
\cite{BoileauPorti,KapovichBook}, 
hence we assume that $M^3$ is non-orientable.
We will prove a more precise result in Theorem~\ref{Thm:coordinatesNO}, as 
for our purposes it is relevant to describe local coordinates in terms of the geometry of
holonomy structures at the ends.

Before starting the proof, we need a  lemma on varieties of representations. 
The projection to the quotient  $\pi\colon\hom(\Gamma,G)\to \mathcal{R}(\Gamma,G)$ can have quite bad properties,
for instance even if  $\hom(\Gamma,G)$ is Hausdorff, in general $\mathcal{R}(\Gamma,G)$ is not. But
in  a neighborhood of the holonomy we have:

\begin{Lemma}
\label{Lemma:GoodNbhd}
 There exists a neighborhood $V\subset \mathcal{R}(\Gamma,G)$ of $[\mathrm{hol}]$ such that:
 \begin{enumerate}[(a)]
  \item If $[\rho]=[\rho']\in V$, then the matrix $A\in G$ satisfying 
  $A\rho(\gamma) A^{-1}=\rho'(\gamma)$, $\forall\gamma\in\Gamma$, is \emph{unique}.
  \item $V$ is Hausdorff and the projection $\pi\colon \pi^{-1}(V)\to V$ is open.
  \item If $[\rho]\in V$, then $\forall\gamma\in \Gamma$ $\rho(\gamma)$ preserves the orientation
  of $\mathbb{H}^3$ if and only if $\gamma$ is represented by a loop that preserves the orientation of $M^3$
 \end{enumerate}
\end{Lemma}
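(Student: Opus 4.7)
The plan is to reduce all three statements to a single fact: the centralizer of the holonomy in $G$ is trivial and this property persists on a neighborhood in $\hom(\Gamma,G)$. Because $M^3$ is a complete hyperbolic $3$-manifold of finite volume, $\mathrm{hol}(\Gamma)$ is a non-elementary Kleinian group (its index-$2$ orientation-preserving subgroup already is), so one can find $\alpha,\beta\in\Gamma$ whose holonomies are loxodromic with four distinct fixed points on $\partial_\infty\mathbb H^3$ not lying on a common circle; any element of $G$ centralising both $\mathrm{hol}(\alpha)$ and $\mathrm{hol}(\beta)$ must fix these four points, and no non-trivial isometry of $\mathbb H^3$ fixes four non-cocircular points of $\partial_\infty\mathbb H^3$. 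The conditions ``loxodromic with distinct fixed points'' and ``not cocircular'' are open in $\hom(\Gamma,G)$, so there is an open neighborhood $\tilde V\subset\hom(\Gamma,G)$ of $\mathrm{hol}$ on which every representation has trivial centralizer in $G$.

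Item (a) is then immediate: if $A\rho A^{-1}=\rho'=B\rho B^{-1}$ with $\rho,\rho'\in\tilde V$, then $B^{-1}A\in Z_G(\rho(\Gamma))=\{1\}$, so $A=B$. For item (c), let $\epsilon\colon G\to\mathbb Z_2$ be the continuous orientation character of $G$, so that $\epsilon\circ\mathrm{hol}\colon\Gamma\to\mathbb Z_2$ is the standard orientation homomorphism of $M^3$. Pick a finite generating set $\gamma_1,\dots,\gamma_n$ of $\Gamma$; since $\mathbb Z_2$ is discrete, after shrinking $\tilde V$ we may assume $\epsilon(\rho(\gamma_i))$ is constant in $\rho\in\tilde V$ for every $i$. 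Then $\epsilon\circ\rho$ and $\epsilon\circ\mathrm{hol}$ are homomorphisms $\Gamma\to\mathbb Z_2$ that agree on a generating set, hence on all of $\Gamma$, which is item (c).

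The delicate statement is (b). Openness of the quotient map is automatic for orbit projections of continuous group actions, so the only real content is Hausdorffness of $V$. The plan is to construct a local slice through $\mathrm{hol}$: triviality of the centralizer makes the orbit map $G\to\hom(\Gamma,G)$, $g\mapsto g\,\mathrm{hol}\, g^{-1}$, a continuous injection, and by the known smoothness of the character variety near the holonomy of a finite volume hyperbolic $3$-manifold (cf.~\cite{KapovichBook}) the image is a locally embedded submanifold. A transverse slice $S\ni\mathrm{hol}$ then exists by a standard tubular neighborhood argument, so that the action map $G\times S\to\hom(\Gamma,G)$ is a homeomorphism onto an open saturated neighborhood $\tilde V'$ of the orbit. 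Setting $V:=\pi(\tilde V')\subset\mathcal R(\Gamma,G)$, the induced map $S\to V$ is a homeomorphism, so $V$ is Hausdorff.

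I expect the main obstacle to be precisely this slice construction, as it is the only step that really uses the deformation theory of $\Gamma$ in $G$ rather than formal topology of group actions; everything else is an elementary consequence of the trivial centralizer together with continuity of $\epsilon$. Once the slice is in place the three items follow cleanly, and one even obtains the stronger fact that $\pi^{-1}(V)\to V$ is, after possibly further shrinking, a trivial principal $G$-bundle.
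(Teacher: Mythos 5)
Your items (a) and (c) are handled correctly, and by an argument that is essentially a self-contained version of what the paper outsources: the paper proves (a) and (b) by citing Johnson--Millson's notion of a \emph{good} representation (openness of the good locus plus the properties it guarantees), and disposes of (c) exactly as you do, by continuity and the two-component decomposition of $G$. Your trivial-centralizer argument (two loxodromics with non-cocircular fixed points, an isometry fixing four non-cocircular ideal points is the identity, openness of these conditions) is sound, and (a) follows immediately from it.

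The genuine gap is in (b), and it sits precisely where you predicted. From a transverse slice $S$ to the (locally closed) orbit $G\cdot\mathrm{hol}$ you only get, for free, that $G'\times S\to\hom(\Gamma,G)$ is a homeomorphism onto a neighborhood of $\mathrm{hol}$ for $G'$ a small neighborhood of the identity in $G$. Your claim that $G\times S\to\hom(\Gamma,G)$ is a homeomorphism onto an open \emph{saturated} neighborhood --- equivalently, that distinct points of $S$ are never conjugate and that no sequence $g_n\rho g_n^{-1}$ with $g_n\to\infty$ in $G$ accumulates back onto $\pi^{-1}(V)$ --- is exactly the statement that the conjugation action is proper near $\mathrm{hol}$, and this is not a consequence of freeness plus local smoothness (a free action with injectively immersed orbits can still have a non-Hausdorff quotient, as the irrational flow on the torus shows). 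Properness of the $G$-action on the locus of irreducible (or ``good'') representations is true and standard, but it is the actual content of the Johnson--Millson input the paper relies on; as written, your slice argument asserts the conclusion of that theorem rather than proving it. To close the gap you would either prove properness directly (e.g.\ by a normal-family/convergence argument using that nearby representations remain irreducible with uniformly separated fixed-point data), or cite it, at which point your proof of (b) reduces to the paper's.
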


Assertions (a) and (b) are proved for instance by Johnson and Millson in \cite{JohnsonMillson}:
they  define the property of \emph{good}
representation, that is open in $\mathcal{R}(\Gamma,G)$, it implies Assertions (a) and (b), 
and it is satisfied by the conjugacy class of the
holonomy. Assertion (c) is clear by continuity and the decomposition of $G$ in two components, according to the 
orientation.

To describe the neighborhood of the holonomy in $\mathcal{R}(\Gamma, G)$ we use the orientation covering.

\subsection{Orientation covering and the involution on representations}

As mentioned, we assume $M^3$ non-orientable.
Let 
$$M_+^3\to M^3$$ denote the orientation covering, with fundamental group $\Gamma_+=\pi_1(M^3_+)$.
In particular we have a short exact sequence:
$$
1\to\Gamma_+\to\Gamma\to\mathbb Z_2\to 1.
$$

\begin{Definition}
\label{dfn:sigma}
 For $\zeta\in \Gamma\setminus \Gamma_+$, define the group automorphism
 $$
 \begin{array}{rcl}
  \sigma_*\colon \Gamma_+ & \to & \Gamma_+ \\
   \gamma & \mapsto & \zeta\gamma\zeta^{-1}
 \end{array}
 $$
\end{Definition}

The automorphism $\sigma_*$ depends on the choice of  $\zeta\in \Gamma\setminus \Gamma_+$:
 automorphisms corresponding to 
 different choices of $\zeta$  differ by composition (or pre-composition) with an inner automorphism
of $\Gamma_+$; furthermore $\sigma_*^2$ is an inner automorphism because $\zeta^2\in\Gamma_+$.
This automorphism $\sigma_*$ is the map induced by the deck transformation of the
orientation covering $M^3_+\to M^3$.

The map induced by  $\sigma_*$ in the variety of representations is denoted by
$$
\begin{array}{rcl}
\sigma^*\colon \mathcal{R}(\Gamma, G) & \to & \mathcal{R}(\Gamma, G)\\
{}[\rho] & \mapsto & [\rho\circ\sigma_*]
\end{array}
$$
and $\sigma^*$ does not depend on the choice of $\zeta$, because $\sigma_*$ is well defined up to inner automorphism.
Furthermore  $\sigma^*$ is an involution, $(\sigma^*)^2=\mathrm{Id}$.

Consider the restriction map:
$$
 \mathrm{res}\colon \mathcal{R}(\Gamma,G) \mapsto \mathcal{R}(\Gamma_+,G)
$$
that maps the conjugacy class of a representation of $\Gamma$ to the conjugacy class of its 
restriction to $\Gamma_+$.

\begin{Lemma}
\label{Lemma:UV}
 There exist $U\subset \mathcal{R}(\Gamma, G)$ neighborhood of $[\mathrm{hol}]$ and
 $V\subset \mathcal{R}(\Gamma_+, G)$ neighborhood of $\mathrm{res}([\mathrm{hol}])$ such that
 $$
 \mathrm{res}\colon U\overset{\cong}\longrightarrow \{[\rho]\in V\mid \sigma^*([\rho])=[\rho]\}
 $$
 is a homeomorphism.
\end{Lemma}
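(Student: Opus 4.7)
The plan is to construct a continuous inverse to $\mathrm{res}$ on the $\sigma^*$-fixed locus inside a good neighborhood of the holonomy, exploiting the short exact sequence $1\to\Gamma_+\to\Gamma\to\mathbb{Z}_2\to 1$ and the triviality of the centralizer of $\mathrm{hol}(\Gamma_+)$ in $G$. First I would observe that $\mathrm{res}$ is automatically continuous, and that its image lies in the fixed locus: for any $[\rho]\in\mathcal{R}(\Gamma,G)$ and any $\zeta\in\Gamma\setminus\Gamma_+$, the element $\rho(\zeta)\in G$ conjugates $\rho|_{\Gamma_+}$ to $\rho|_{\Gamma_+}\circ\sigma_*$, so $\sigma^*(\mathrm{res}[\rho])=\mathrm{res}[\rho]$.

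For injectivity, I would shrink $U$ so that Lemma~\ref{Lemma:GoodNbhd} applies and so that $\mathrm{res}(U)$ lies in an analogous good neighborhood for $\Gamma_+$. If $[\rho_1],[\rho_2]\in U$ share the same image under $\mathrm{res}$, pick $A\in G$ with $A\rho_1(\gamma)A^{-1}=\rho_2(\gamma)$ for all $\gamma\in\Gamma_+$. A direct computation using $\rho_i(\zeta)\rho_i(\gamma)\rho_i(\zeta)^{-1}=\rho_i(\sigma_*(\gamma))$ shows that $B=\rho_2(\zeta)^{-1}A\rho_1(\zeta)A^{-1}$ commutes with $\rho_2(\Gamma_+)$; since the centralizer of $\mathrm{hol}(\Gamma_+)$ in $G$ is trivial, and this triviality persists in a neighborhood, $B$ equals the identity, so $A\rho_1(\zeta)=\rho_2(\zeta)A$ and $A$ conjugates $\rho_1$ to $\rho_2$ on all of $\Gamma$.

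For surjectivity, given $[\rho_+]\in V$ fixed by $\sigma^*$, I would choose $A\in G$ realizing $A\rho_+(\gamma)A^{-1}=\rho_+(\sigma_*(\gamma))$ and attempt to extend by setting $\tilde\rho(\gamma\zeta^k)=\rho_+(\gamma)A^k$ for $\gamma\in\Gamma_+$ and $k\in\{0,1\}$. The only nontrivial relation to check is $A^2=\rho_+(\zeta^2)$; since $\sigma_*^2$ coincides with conjugation by $\zeta^2$, the element $A^2\rho_+(\zeta^2)^{-1}$ centralizes $\rho_+(\Gamma_+)$ and is therefore trivial by the same argument. Continuity of the resulting inverse then follows from Lemma~\ref{Lemma:GoodNbhd}(b): the conjugating element $A$ is uniquely determined by $\rho_+$, depends continuously on it, and openness of the quotient maps translates this to continuity on conjugacy classes.

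The main obstacle will be verifying that the centralizer of $\rho_+(\Gamma_+)$ in the full disconnected group $G=\mathrm{Isom}(\mathbb{H}^3)$ remains trivial throughout a neighborhood of the holonomy, and not merely in the identity component $\mathrm{PSL}(2,\mathbb{C})$. At the holonomy itself this is clear because any centralizing isometry of $\mathbb{H}^3$ descends to an isometry of $M_+^3$ inducing the identity on $\pi_1(M_+^3)$, hence to the identity map; extending this to nearby representations requires an algebraic openness argument combined with the good-representation property of Lemma~\ref{Lemma:GoodNbhd}, and the need to handle orientation-reversing centralizing elements separately is what makes this step delicate.
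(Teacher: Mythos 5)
Your proposal is correct and follows essentially the same route as the paper: show the image of $\mathrm{res}$ lies in the $\sigma^*$-fixed locus by direct computation, extend a fixed $[\rho_+]$ to $\Gamma$ by sending $\zeta$ to the conjugating element $A$ and verifying $A^2=\rho_+(\zeta^2)$, and upgrade the continuous bijection to a homeomorphism via Lemma~\ref{Lemma:GoodNbhd}(b). Your phrasing in terms of triviality of the centralizer is equivalent to the paper's appeal to uniqueness of the conjugating matrix in Lemma~\ref{Lemma:GoodNbhd}(a) (which is stated for the full disconnected $G$, so the ``delicate'' point you flag about orientation-reversing centralizers is already absorbed there), and your explicit injectivity computation merely spells out what the paper leaves implicit.
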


\begin{proof}
 We show first that 
 $\mathrm{res}(\mathcal{R}(\Gamma, G))\subset   \{[\rho]\in  \mathcal{R}(\Gamma_+, G) \mid \sigma^*([\rho])=[\rho]\}  $:
 if $\rho_+=\mathrm{res}(\rho)$, then $\forall\gamma\in \Gamma_+$
 $$
 \sigma^*(\rho_+)(\gamma)=\rho_+(\sigma_*(\gamma))=\rho_+(\zeta\gamma\zeta^{-1})=
 \rho(\zeta)\rho_+(\gamma)\rho(\zeta)^{-1}.
 $$
Hence $\sigma^*([\mathrm{res}(\rho)])= [\mathrm{res}(\rho)]$.

Next, given $[\rho_+]\in   \mathcal{R}(\Gamma_+, G) $ satisfying  $\sigma^*([\rho_+])=[\rho_+]$, by construction 
there exists 
$A\in G$ that conjugates $\rho_+$  and $\rho_+\circ\sigma_*$. 
We chose the neighborhood $V$ so that
Lemma~\ref{Lemma:GoodNbhd} applies, hence such an   $A\in G$ is unique. From uniqueness (of $A$ and $A^2$), it follows easily that,
if $\zeta\in\Gamma\setminus\Gamma_+$ is the element such that $\sigma_*$ is conjugation by $\zeta$
then, by choosing $\rho(\zeta)=A$,
$\rho_+$ extends to $\rho\colon\Gamma\to G$. Hence:
$$
\mathrm{res}(\mathcal{R}(\Gamma, G))= \{[\rho]\in  \mathcal{R}(\Gamma_+, G) \mid \sigma^*([\rho])=[\rho]\}  .
$$
Let $U=\mathrm{res}^{-1}(V)$. With this choice of $U$ and~$V$,
$$ \mathrm{res}\colon U\to  \{[\rho]\in V\mid \sigma^*([\rho])=[\rho]\}$$ is a continuous bijection.

Finally we establish continuity of  $\mathrm{res}^{-1}$ using a slice. 
The existence of a slice $S\subset \mathcal{R}(\Gamma_+, G) $ at 
$\mathrm{res}(\mathrm{hol})$ is proved  by Johnson and Millson in 
\cite[Theorem~1.2]{JohnsonMillson}, who point to Borel-Wallach \cite[IX.5.3]{BorelWallach} for a definition
of slice.
From the properties of the slice, and as the stabilizer of $\mathrm{hol}\vert_{\Gamma_+}$ is trivial,
the natural map $G\times S\to  \mathcal{R}(\Gamma_+, G)$, that maps $(g, s)\in  G \times S$
to $gsg^{-1}$, yields a homeomorphism 
between $G\times S$ and  a neighborhood
of the orbit of $\mathrm{res}(\mathrm{hol})$, 
and the
projection induces  a homeomorphism $S\cong V$. It follows from the product structure that the
$A\in G$ that conjugates $\rho_+$ and $\rho_+\circ \sigma_*$ is continuous on $\rho_+$,
so the extension of $\rho_+$ to a representation of the whole $\Gamma$ is continuous on $\rho_+$.
Then  continuity of $\mathrm{res}^{-1}$ follows by composing the homeomorphism $V\cong S$
(restricted to the fixed point set of $\sigma^*$) with the extension from $\Gamma_+$  to $\Gamma$, and projecting to 
 $U\subset \mathcal{R}(\Gamma, G)$.
\end{proof}

As $\Gamma_+$ preserves the orientation, next we use the complex structure of the 
identity component 
$G_0= \operatorname{Isom}^+(\mathbb{H}^3)\cong \mathrm{PSL}(2,\mathbb C)$.

\subsection{Representations  in $ \mathrm{PSL}(2,\mathbb C)$}

The holonomy of the orientation covering $M_+^3$ is contained in $ \mathrm{PSL}(2,\mathbb C)$,
and it is well defined up to the action of $G=\mathrm{PSL}(2,\mathbb C)\rtimes\mathbb Z_ 2$
by conjugation. If we furthermore choose an orientation on $M_+^3$, then the holonomy is unique up
to the action by conjugacy of $G_0= \operatorname{Isom}^+(\mathbb{H}^3)\cong \mathrm{PSL}(2,\mathbb C)$,
and complex conjugation corresponds to changing the orientation.
We call the conjugacy class  in $\mathrm{PSL}(2,\mathbb C)$ of the holonomy of 
$M^3_+$ the \emph{oriented holonomy}.

 We consider
$$
 \mathcal{R}(\Gamma_+, \mathrm{PSL}(2,\mathbb C))=
 \hom( \Gamma_+,  \mathrm{PSL}(2,\mathbb C))/ \mathrm{PSL}(2,\mathbb C).
$$
Its local structure is well known:

\begin{Theorem}
\label{Thm:smooth}
A neighborhood of the oriented holonomy of $M_+^3$ in $\mathcal{R}(\Gamma_+, \mathrm{PSL}(2,\mathbb C))$
has a natural structure of $\mathbb{C}$-analytic variety
defined over $\mathbb{R}$. 

\end{Theorem}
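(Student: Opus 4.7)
The plan is to realize $\hom(\Gamma_+,\mathrm{PSL}(2,\mathbb C))$ as a complex algebraic subvariety of a power of $\mathrm{PSL}(2,\mathbb C)$ via a finite presentation of $\Gamma_+$, and then to descend the analytic structure to a neighborhood of the oriented holonomy in the quotient by exploiting that the holonomy is a good (in particular irreducible) representation. The real structure will come for free from complex conjugation on $\mathrm{PSL}(2,\mathbb C)$.

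First I fix a finite presentation $\Gamma_+=\langle g_1,\dots,g_n \mid r_1,\dots,r_s\rangle$ (such a presentation exists since $M_+^3$ has finite volume and so $\Gamma_+$ is finitely presented). Evaluation on the generators gives a closed embedding
$$
\hom(\Gamma_+,\mathrm{PSL}(2,\mathbb C))\hookrightarrow \mathrm{PSL}(2,\mathbb C)^n
$$
whose image is cut out by the polynomial equations $r_j(X_1,\dots,X_n)=\mathrm{Id}$. These equations have integer coefficients, so the representation variety is an affine complex algebraic variety defined over $\mathbb R$, in particular a $\mathbb C$-analytic set defined over $\mathbb R$.

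Next, $\mathrm{PSL}(2,\mathbb C)$ acts algebraically by conjugation and I must descend the analytic structure to the local quotient around the oriented holonomy. Because $M_+^3$ has finite volume, its holonomy is discrete, faithful and Zariski dense in $\mathrm{PSL}(2,\mathbb C)$, hence irreducible and \emph{good} in the sense of Johnson--Millson \cite{JohnsonMillson}; applying Lemma~\ref{Lemma:GoodNbhd} to $\Gamma_+$ one gets that in a neighborhood of the oriented holonomy the orbits of the conjugation action are closed and free modulo the finite center, the quotient is Hausdorff, and the projection is open. Standard slice arguments (see \cite{JohnsonMillson,KapovichBook}) then produce a $\mathbb C$-analytic slice transverse to the orbit of $[\mathrm{hol}]$; this slice inherits its $\mathbb C$-analytic structure from the ambient algebraic representation variety and the conjugation orbits of nearby points meet it either at a unique point or along a discrete set of points, yielding the desired local $\mathbb C$-analytic structure on $\mathcal R(\Gamma_+,\mathrm{PSL}(2,\mathbb C))$ near the oriented holonomy.

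Finally, the real structure: entrywise complex conjugation is an antiholomorphic involution on $\mathrm{PSL}(2,\mathbb C)$ which commutes with the conjugation action and preserves the integer-coefficient defining equations of $\hom(\Gamma_+,\mathrm{PSL}(2,\mathbb C))$. It therefore descends to an antiholomorphic involution on the local quotient, and this is exactly the condition that the $\mathbb C$-analytic structure is defined over $\mathbb R$. The main obstacle in this plan is Step 2: upgrading the set-theoretic local quotient to a genuine $\mathbb C$-analytic variety and checking that the slice is well defined and independent of choices; this is precisely where the goodness (equivalently, the irreducibility and discreteness) of the holonomy of a finite-volume hyperbolic $3$-manifold is used in an essential way, and it is the reason one must restrict to a small neighborhood of $[\mathrm{hol}]$ rather than asserting the statement globally.
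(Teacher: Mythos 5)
Your proposal is correct and follows essentially the same route the paper intends, namely the standard Johnson--Millson/Kapovich argument that the paper simply cites: realize $\hom(\Gamma_+,\mathrm{PSL}(2,\mathbb C))$ as an affine variety cut out by integer-coefficient equations in $\mathrm{PSL}(2,\mathbb C)^n$ and use goodness (irreducibility plus closed, stabilizer-free orbits) of the holonomy to produce an analytic slice modelling the local quotient. One small caveat on your last step: by the paper's own Lemma~\ref{Lemma:notfixed}, complex conjugation does \emph{not} fix $[\mathrm{hol}_+]$, so it is not an involution of the neighborhood $W$ itself but an antiholomorphic homeomorphism onto the disjoint neighborhood $\overline{W}$; ``defined over $\mathbb{R}$'' should therefore be read at the level of the defining equations (equivalently, of the global antiholomorphic involution of $\mathcal{R}(\Gamma_+,\mathrm{PSL}(2,\mathbb C))$), which is what is actually used later when one asserts that the coordinates $I_\gamma$ commute with conjugation.
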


The fact that it is $\mathbb C$-analytic follows for instance from~\cite{JohnsonMillson} or~\cite{KapovichBook}.
In Theorem~\ref{Thm:coordinates} we precise  $\mathbb C$-analytic coordinates, for the moment this is sufficient for our purposes.

\begin{Lemma}
\label{Lemma:notfixed}
 Let $\mathrm{hol}_+$ be the oriented holonomy of $M_+^3$. Then
 $$
 [\mathrm{hol}_+]\neq [\overline{\mathrm{hol}_+}]\in \mathcal{R}(\Gamma_+, \mathrm{PSL}(2,\mathbb C)).
 $$
 Namely, the  oriented holonomy and its complex conjugate are not conjugate by a matrix in 
 $\mathrm{PSL}(2,\mathbb C)$.
\end{Lemma}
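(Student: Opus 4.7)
The plan is to argue by contradiction, using that the image $\mathrm{hol}_+(\Gamma_+)$ is a lattice in $\mathrm{PSL}(2,\mathbb{C})$ (hence Zariski-dense) and that complex conjugation corresponds to an orientation-reversing isometry of $\mathbb{H}^3$. Suppose there exists $A\in\mathrm{PSL}(2,\mathbb{C})$ with
\[
A\,\mathrm{hol}_+(\gamma)\,A^{-1}=\overline{\mathrm{hol}_+(\gamma)}\qquad\text{for all }\gamma\in\Gamma_+.
\]
Let $c\in G$ denote the Poincaré extension of complex conjugation, which is orientation-reversing and satisfies $c\,h\,c^{-1}=\bar h$ for every $h\in\mathrm{PSL}(2,\mathbb{C})$. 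Then the element $g:=c^{-1}A\in G$ satisfies $g\,\mathrm{hol}_+(\gamma)\,g^{-1}=\mathrm{hol}_+(\gamma)$ for all $\gamma\in\Gamma_+$, i.e.\ $g$ centralizes $\mathrm{hol}_+(\Gamma_+)$ in $G$; crucially, $g$ is orientation-reversing.

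The heart of the argument is showing the centralizer of $\mathrm{hol}_+(\Gamma_+)$ in $G$ is trivial. Since $M_+^3$ has finite volume, $\mathrm{hol}_+(\Gamma_+)$ is a lattice in $G_0=\mathrm{PSL}(2,\mathbb{C})$, and by Borel density it is Zariski-dense in $G_0$. Any element of $G$ centralizing a Zariski-dense subgroup of $G_0$ centralizes all of $G_0$. The centralizer of $G_0$ inside $G_0$ is $Z(G_0)=\{1\}$; and no element of the nontrivial coset $G\setminus G_0$ can centralize $G_0$, because conjugation by such an element acts on $G_0$ as (inner)$\circ$(complex conjugation), and complex conjugation is not an inner automorphism of $\mathrm{PSL}(2,\mathbb{C})$. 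Hence the centralizer of $\mathrm{hol}_+(\Gamma_+)$ in $G$ is $\{1\}$, contradicting that $g$ is orientation-reversing.

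An equivalent repackaging, which also fits naturally with the setup of Lemma~\ref{Lemma:UV}, would be: if $[\mathrm{hol}_+]=[\overline{\mathrm{hol}_+}]$, then $\sigma^*([\mathrm{hol}_+])=[\mathrm{hol}_+]$ could be realized by an orientation-preserving conjugator, and composing with the (orientation-reversing) extension $\mathrm{hol}(\zeta)$ would produce a nontrivial centralizer element, leading to the same contradiction. The main technical point in either formulation is the triviality of the centralizer, which is standard but should be stated carefully because we must rule out orientation-reversing centralizers as well as orientation-preserving ones; this is what makes the non-innerness of complex conjugation on $\mathrm{PSL}(2,\mathbb{C})$ the essential ingredient.
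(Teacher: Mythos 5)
Your proof is correct, and its first half coincides with the paper's: both reduce the problem to producing an orientation-reversing isometry ($B=c\circ A$ in the paper, your $g=c^{-1}A$, the same element) that commutes with every element of $\mathrm{hol}_+(\Gamma_+)$, and then derive a contradiction from the fact that such an element must be trivial. Where you diverge is in how triviality is established. The paper argues geometrically: Brouwer's theorem gives $\mathrm{Fix}(B)\neq\emptyset$ in the closed ball, this set is closed and $\mathrm{hol}_+(\Gamma_+)$-invariant, so minimality of the limit set forces $\partial_\infty\mathbb{H}^3\subset\mathrm{Fix}(B)$ and hence $B=\mathrm{Id}$. You argue algebraically: Borel density makes the lattice Zariski-dense in $G_0$, the commutation condition is (real) Zariski-closed so $g$ centralizes all of $G_0$, and this is impossible because $Z(G_0)$ is trivial and complex conjugation is not an inner automorphism of $\mathrm{PSL}(2,\mathbb{C})$ (anti-holomorphic versus holomorphic, or simply because conjugation preserves traces while conjugation of coefficients does not). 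Your route invokes heavier machinery (Borel density) but isolates the genuinely essential algebraic fact --- non-innerness of complex conjugation --- and generalizes verbatim to lattices in other semisimple groups; the paper's route stays inside elementary Kleinian group theory and needs nothing beyond the minimality of the limit set. One small point worth making explicit in your version: the relevant Zariski topology is the real one, since conjugation by an orientation-reversing $g$ acts on $G_0$ anti-holomorphically, so the fixed set is only real-algebraic; Borel density does give density in that topology, so the argument goes through.
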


\begin{proof}
By contradiction, assume that $\mathrm{hol}_+$ and $\overline{\mathrm{hol}_+}$ are 
conjugate by a matrix in 
$\mathrm{PSL}(2,\mathbb C)$: there exists an orientation-preserving isometry $A\in \mathrm{PSL}(2,\mathbb C)$ such that
$$
A\,\mathrm{hol}_+(\gamma)\, A^{-1}= \overline{\mathrm{hol}_+(\gamma)},\qquad\forall\gamma\in \Gamma_+.
$$
Consider the  orientation reversing isometry $B=c\circ A$, where $c$ is the isometry with  M\"obius transformation the complex conjugation, $z \mapsto \overline{z}$. 
The previous equation is equivalent to
\begin{equation}
 \label{eqn:commute}
 B\,\mathrm{hol}_+(\gamma)\, B^{-1}={\mathrm{hol}_+(\gamma)},\qquad\forall\gamma\in \Gamma_+.
\end{equation}
Brouwer's fixed point theorem yields that the fixed point set of $B$ in the ball compactification $\mathbb{H}^3\cup\partial_{\infty} \mathbb{H}^3$
is non-empty: 
$$
\mathrm{Fix}(B)=\{x\in \mathbb{H}^3\cup\partial_{\infty} \mathbb{H}^3\mid B(x)=x\}\neq\emptyset.
$$
By \eqref{eqn:commute}  $\mathrm{hol}_+(\Gamma_+)$  preserves $\mathrm{Fix}(B)$. Thus, by minimality of the limit set of a Kleinian group, since 
$\mathrm{Fix}(B)\neq\emptyset$ is closed and  $\mathrm{hol}_+(\Gamma_+)$-invariant, it contains the whole ideal boundary: 
$\partial_{\infty} \mathbb{H}^3\subset \mathrm{Fix}(B)$. Hence $B$ is the identity, contradicting that $B$ reverses the  orientation.
\end{proof}

From Lemma~\ref{Lemma:notfixed} and Theorem~\ref{Thm:smooth}, we have:

\begin{Corollary}
 There exists a neighborhood $W\subset \mathcal{R}(\Gamma_+, \mathrm{PSL}(2,\mathbb C))$
 of the conjugacy class  of the oriented holonomy of $M_+$ that is disjoint from its complex conjugate:
 $$
 \overline{W}\cap W=\emptyset.
 $$
\end{Corollary}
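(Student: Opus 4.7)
The plan is to combine the non-equality of $[\mathrm{hol}_+]$ and $[\overline{\mathrm{hol}_+}]$ from Lemma~\ref{Lemma:notfixed} with the local Hausdorffness of $\mathcal{R}(\Gamma_+, \mathrm{PSL}(2,\mathbb C))$ at the holonomy, together with the continuity of complex conjugation on the variety of representations. The latter holds because, on $\hom(\Gamma_+, \mathrm{PSL}(2,\mathbb C))$, the operation of replacing a representation $\rho$ by its entry-wise complex conjugate $\bar\rho$ is continuous (indeed $\mathbb{R}$-analytic), and it descends to a continuous involution on $\mathcal{R}(\Gamma_+, \mathrm{PSL}(2,\mathbb C))$ since it commutes with $\mathrm{PSL}(2,\mathbb C)$-conjugation up to taking conjugates of the conjugating element.

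First, I would invoke Lemma~\ref{Lemma:GoodNbhd} (applied, via the restriction isomorphism of Lemma~\ref{Lemma:UV}, to the holonomy of the orientation covering $M_+^3$) together with Theorem~\ref{Thm:smooth} to fix an open Hausdorff neighborhood $V_0$ of $[\mathrm{hol}_+]$ in $\mathcal{R}(\Gamma_+, \mathrm{PSL}(2,\mathbb C))$. Since $\overline{[\mathrm{hol}_+]}=[\overline{\mathrm{hol}_+}]$ is also a good conjugacy class (its stabilizer properties are preserved by complex conjugation), I can enlarge $V_0$ if needed to ensure $\overline{V_0}$ is likewise a Hausdorff open set containing $[\overline{\mathrm{hol}_+}]$.

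Next, by Lemma~\ref{Lemma:notfixed}, $[\mathrm{hol}_+]\neq[\overline{\mathrm{hol}_+}]$. Using Hausdorffness inside $V_0\cup\overline{V_0}$ (or simply working within any Hausdorff open containing both points), I choose disjoint open neighborhoods $U$ of $[\mathrm{hol}_+]$ and $V$ of $[\overline{\mathrm{hol}_+}]$, with $U\cap V=\emptyset$. Setting
\[
W:=U\cap \overline{V},
\]
observe that $W$ is open and contains $[\mathrm{hol}_+]$, since $[\mathrm{hol}_+]\in U$ and $\overline{[\mathrm{hol}_+]}=[\overline{\mathrm{hol}_+}]\in V$ forces $[\mathrm{hol}_+]\in\overline{V}$. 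Applying complex conjugation gives $\overline{W}=\overline{U}\cap V$, so
\[
W\cap\overline{W}=(U\cap\overline{V})\cap(\overline{U}\cap V)\subset U\cap V=\emptyset,
\]
as desired.

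There is no real obstacle; the only delicate point is verifying that complex conjugation is well-defined and continuous at the level of the character variety, but this is immediate from its compatibility with conjugation in $\mathrm{PSL}(2,\mathbb C)$. The content of the statement is entirely concentrated in Lemma~\ref{Lemma:notfixed}; the passage to a separating neighborhood is then a formal consequence of local Hausdorffness and continuity of the involution.
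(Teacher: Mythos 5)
Your argument is correct and is essentially the proof the paper intends (the paper offers no written proof beyond asserting that the corollary follows from Lemma~\ref{Lemma:notfixed} and Theorem~\ref{Thm:smooth}): one separates the two distinct conjugacy classes using local Hausdorffness and the continuity of the conjugation involution, and your intersection trick $W=U\cap\overline{V}$ is a clean way to conclude. The one point to be careful about is that a union of two Hausdorff open sets need not be Hausdorff, so to choose the disjoint $U$ and $V$ you should rely, as your parenthetical already suggests, on a single Hausdorff open set containing both $[\mathrm{hol}_+]$ and $[\overline{\mathrm{hol}_+}]$ --- for instance the open locus of good conjugacy classes of \cite{JohnsonMillson}, which contains both since goodness is preserved by complex conjugation.
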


By choosing the neighborhood $W\subset \mathcal{R}(\Gamma_+, \mathrm{PSL}(2,\mathbb C))$
sufficiently small, we may assume that its projection to 
$\mathcal{R}(\Gamma_+, G)$
is in contained in $V$ as in Lemma~\ref{Lemma:UV}.
The neighborhood $V$ can also be chosen smaller, to be equal to the projection of $W$, as this map is open. Namely the neighborhoods
can be chosen so that 
$   \mathcal{R}(\Gamma_+, \mathrm{PSL}(2,\mathbb C))\to \mathcal{R}(\Gamma_+, G)$
restricts to a homeomorphism between $W$ (or $\overline{W}$) and $V$. In particular we can lift to $W$ the
restriction map from $U$ to $V$: 
$$
\xymatrix{
                      &  & W \ar[d]^{\cong}\\
    U \ar[rr]^{\mathrm{res}}   \ar@{.>}[urr]^{\widetilde{\mathrm{res}}}   &   & V }
 $$

\begin{Lemma}
\label{Lemma:liftres}
For $U\subset \mathcal{R}(\Gamma, G)$ and $W \subset \mathcal{R}(\Gamma_+, 
 \mathrm{PSL}(2,\mathbb C))$ as above, the lift of the restriction map yields an homeomorphism:
  $$
  {\widetilde{\mathrm{res}}}\colon U\overset{\cong}\longrightarrow 
  \{[\rho]\in W\mid [\rho\circ\sigma_*]=[\overline{\rho}] \}.
  $$
\end{Lemma}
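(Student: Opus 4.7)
The plan is to combine Lemma~\ref{Lemma:UV} with the two-to-one covering $\mathcal{R}(\Gamma_+,\mathrm{PSL}(2,\mathbb C))\to\mathcal{R}(\Gamma_+,G)$, using the Corollary to reduce everything to statements inside $W$. First I would observe that two classes in $\mathcal{R}(\Gamma_+,\mathrm{PSL}(2,\mathbb C))$ have the same image in $\mathcal{R}(\Gamma_+,G)$ if and only if they are related by $\mathrm{PSL}(2,\mathbb C)$-conjugation or by complex conjugation, because $G\setminus G_0 = c\cdot\mathrm{PSL}(2,\mathbb C)$ where $c$ is the Poincaré extension of complex conjugation. Since $\overline W\cap W=\emptyset$, no two distinct classes in $W$ get identified in $V$, so (shrinking $V$ to the image of $W$, as in the paragraph preceding the lemma) the map $W\to V$ is a homeomorphism.

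Next I would translate the fixed-point condition. For $[\rho]\in W$, the class $\sigma^{*}([\rho])\in V$ is the image of $[\rho\circ\sigma_*]\in\mathcal{R}(\Gamma_+,\mathrm{PSL}(2,\mathbb C))$. Hence $\sigma^{*}([\rho])=[\rho]$ in $V$ precisely when $[\rho\circ\sigma_*]$ equals either $[\rho]$ or $[\overline\rho]$ in $\mathcal{R}(\Gamma_+,\mathrm{PSL}(2,\mathbb C))$. Evaluating at the complete structure, a short computation with $\mathrm{hol}(\zeta)=cB$ for some $B\in\mathrm{PSL}(2,\mathbb C)$ gives
$$
\mathrm{hol}_+\circ\sigma_*(\gamma)= \overline{B\,\mathrm{hol}_+(\gamma)\,B^{-1}},\qquad\forall\gamma\in\Gamma_+,
$$
so $[\mathrm{hol}_+\circ\sigma_*]=[\overline{\mathrm{hol}_+}]$, which by Lemma~\ref{Lemma:notfixed} is \emph{different} from $[\mathrm{hol}_+]$.

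The key step is then to rule out the spurious branch $[\rho\circ\sigma_*]=[\rho]$ inside $W$. Using continuity of the involution $[\rho]\mapsto[\rho\circ\sigma_*]$ on $\mathcal{R}(\Gamma_+,\mathrm{PSL}(2,\mathbb C))$ together with the disjointness $\overline W\cap W=\emptyset$, I would shrink $W$ so that its image under this involution lies entirely in a small neighborhood of $[\overline{\mathrm{hol}_+}]$, still disjoint from $W$. Then for $[\rho]\in W$ the class $[\rho\circ\sigma_*]$ is outside $W$ while $[\rho]$ is inside, forcing the only compatible possibility to be $[\rho\circ\sigma_*]=[\overline\rho]$.

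Finally, putting things together, Lemma~\ref{Lemma:UV} gives $\mathrm{res}\colon U\overset{\cong}\longrightarrow\{[\rho]\in V\mid\sigma^{*}([\rho])=[\rho]\}$, the homeomorphism $W\cong V$ transports this target to $\{[\rho]\in W\mid[\rho\circ\sigma_*]=[\overline\rho]\}$, and the composition is the desired $\widetilde{\mathrm{res}}$. The main obstacle is the third step, i.e.\ eliminating the branch $[\rho\circ\sigma_*]=[\rho]$: this is exactly where Lemma~\ref{Lemma:notfixed} becomes indispensable, as it provides the initial separation at the holonomy that is then propagated by continuity to all of $W$.
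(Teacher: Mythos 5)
Your proposal is correct, but it is organized differently from the paper's argument. The paper disposes of this lemma in one line: it reruns the proof of Lemma~\ref{Lemma:UV} verbatim, the only new ingredient being that $\rho(\zeta)$ is orientation-reversing (Lemma~\ref{Lemma:GoodNbhd}(c)), so writing $\rho(\zeta)=A\circ c$ one gets $\rho_+\circ\sigma_*=A\,\overline{\rho_+}\,A^{-1}$ directly, i.e.\ the condition $[\rho\circ\sigma_*]=[\overline\rho]$ falls out of the algebra, and surjectivity is again the extension argument ``set $\rho(\zeta)=A\circ c$'' with uniqueness of $A$. You instead treat Lemma~\ref{Lemma:UV} as a black box and work purely with the two-to-one identification $[\rho]\sim[\overline\rho]$ under $\mathcal{R}(\Gamma_+,\mathrm{PSL}(2,\mathbb C))\to\mathcal{R}(\Gamma_+,G)$: the fixed-point condition in $V$ lifts to the alternative $[\rho\circ\sigma_*]\in\{[\rho],[\overline\rho]\}$, and you exclude the branch $[\rho\circ\sigma_*]=[\rho]$ by anchoring at the complete structure (where Lemma~\ref{Lemma:notfixed} gives $[\mathrm{hol}_+\circ\sigma_*]=[\overline{\mathrm{hol}_+}]\neq[\mathrm{hol}_+]$) and propagating by continuity after shrinking $W$ — for instance one can take $W$ small enough that its image under $[\rho]\mapsto[\rho\circ\sigma_*]$ lands in $\overline W$, which is disjoint from $W$. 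Your route avoids redoing the extension-of-$\rho_+$ step and makes explicit why the correct condition on $W$ is conjugacy to $\overline\rho$ rather than to $\rho$ (a point the paper leaves implicit); the paper's route is shorter and yields the same statement without the extra shrinking of $W$. Both are valid, and both ultimately rest on the same inputs: $\overline W\cap W=\emptyset$ (hence Lemma~\ref{Lemma:notfixed}) and the uniqueness of conjugating elements from Lemma~\ref{Lemma:GoodNbhd}.
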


This lemma has same proof as Lemma~\ref{Lemma:UV}, just taking into account that 
$\rho(\zeta)\in G$ reverses the orientation, for
 $[\rho]\in U$ and $\zeta\in \Gamma\setminus\Gamma_+$.

\subsection{Local coordinates}
 
Here we give the local coordinates of Theorem~\ref{Thm:smooth} and we prove a stronger version of Theorem~\ref{Thm:dimdef}.

For $\gamma\in\Gamma_+$ and $[\rho]\in\mathcal{R}(\Gamma_+,\mathrm{PSL}(2,\mathbb{C}))$,
as Culler and Shalen in \cite{CullerShalen} define
\begin{equation}
 \label{eqn:Igamma}
I_{\gamma}([\rho])=(\mathrm{trace}(\rho(\gamma)))^2-4.
 \end{equation}
Thus $I_\gamma$ is a function from  $\mathcal{R}(\Gamma_+,\mathrm{PSL}(2,\mathbb{C}))$
to $\mathbb{C}$. This function plays a role in the  
generalization of  Theorem~\ref{Thm:dimdef}. 

\begin{Theorem}
 \label{Thm:coordinates}
Let $M_+^3$ be as above and assume that it has $n$ cusps. 
Chose $\gamma_1,\ldots,\gamma_n\in\Gamma_+$ a non-trivial element for each peripheral
subgroup. Then, for a neighborhood $W\subset \mathcal{R}(\Gamma_+,\mathrm{PSL}(2,\mathbb{C}))$
of the oriented holonomy,
$$
(I_{\gamma_1},\dots, I_{\gamma_n})\colon W\to\mathbb{C}^n
$$
 defines a bi-analytic map between $W$ and a neighborhood of the origin.
\end{Theorem}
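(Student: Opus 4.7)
The plan is to apply the inverse function theorem. By Theorem~\ref{Thm:smooth} together with the classical local rigidity of complete finite-volume hyperbolic $3$-manifolds (Calabi--Weil--Garland), $W$ is a smooth $\mathbb C$-analytic variety of complex dimension $N$ at $[\mathrm{hol}_+]$. Each $I_{\gamma_i}$ vanishes there because $\mathrm{hol}_+(\gamma_i)$ is parabolic with trace $\pm 2$. Since the source and target of $(I_{\gamma_1},\dots,I_{\gamma_N})$ both have complex dimension $N$, it suffices to prove that the differentials $dI_{\gamma_1},\dots,dI_{\gamma_N}$ are linearly independent on the tangent space $T_{[\mathrm{hol}_+]}W \cong H^1(\Gamma_+,\mathfrak{sl}_2\mathbb C)$, where $\mathfrak{sl}_2\mathbb C$ carries the adjoint $\mathrm{hol}_+$-action.

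To compute these differentials I would work at the cocycle level: for $u\in Z^1(\Gamma_+,\mathfrak{sl}_2\mathbb C)$ representing a tangent vector, the one-parameter family $\rho_t(\gamma)=\exp(tu(\gamma))\,\mathrm{hol}_+(\gamma)$ satisfies
\[
dI_{\gamma_i}([u]) \;=\; 2\,\mathrm{tr}\bigl(\mathrm{hol}_+(\gamma_i)\bigr)\,\mathrm{tr}\bigl(u(\gamma_i)\,\mathrm{hol}_+(\gamma_i)\bigr),
\]
which depends only on the restriction of $u$ to $\pi_1(T_i)$. Next I would invoke the Weil--Garland fact that the restriction map
\[
r\colon H^1(\Gamma_+,\mathfrak{sl}_2\mathbb C) \longrightarrow \bigoplus_{i=1}^{N} H^1\bigl(\pi_1(T_i),\mathfrak{sl}_2\mathbb C\bigr)
\]
is injective, with image an $N$-dimensional Lagrangian subspace of a $2N$-dimensional target; in particular its projection to each peripheral factor is $1$-dimensional.

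Finally, after normalizing so that $\mathrm{hol}_+(\gamma_i)=\bigl(\begin{smallmatrix}1 & 1\\ 0 & 1\end{smallmatrix}\bigr)$, a direct calculation shows that $dI_{\gamma_i}([u])$ equals, up to the nonzero factor $\pm 4$, the lower-left matrix entry of $u(\gamma_i)$, which is precisely the coordinate transverse to the parabolic locus in the local representation variety of $\pi_1(T_i)\cong\mathbb Z^2$. The Thurston hyperbolic Dehn surgery theorem guarantees that the one-dimensional image of $r$ in each peripheral factor has nonzero projection onto this coordinate. Hence $dI_{\gamma_i}$ restricted to the $i$-th peripheral summand is a nonzero functional, while it vanishes on the other summands; linear independence then follows from the injectivity of $r$, and the inverse function theorem yields the bi-analytic statement. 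The main obstacle is precisely this last transversality at each cusp---equivalently, verifying that the Thurston deformation direction does make the cusp holonomy loxodromic rather than parabolic at first order---which is the analytic heart of the hyperbolic Dehn surgery principle.
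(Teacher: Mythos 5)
The paper does not actually prove this statement---it defers to \cite{BoileauPorti,KapovichBook}---so your proposal is being measured against the standard argument in those references, which is indeed the one you are reconstructing: smoothness of $W$ at the oriented holonomy, the cocycle formula for $dI_{\gamma_i}$, and non-degeneracy via restriction to peripheral cohomology. Your setup is sound, and the computation identifying $dI_{\gamma_i}([u])$ with $\pm 4$ times the lower-left entry $c_i$ of $u(\gamma_i)$ (after normalizing $\mathrm{hol}_+(\gamma_i)$ to be upper triangular unipotent) is correct, as is the observation that $\{c_i=0\}$ is the tangent space to the first-order deformations of $\pi_1(T_i^2)$ that stay parabolic.

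The gap is at the decisive step, and you have flagged it yourself. Deducing that each $dI_{\gamma_i}$ is nonzero on the image of $r$ from ``the Thurston hyperbolic Dehn surgery theorem'' is circular---this theorem \emph{is} the algebraic input to the proof of Dehn surgery, as the paper states explicitly---and it is also insufficient even if Dehn surgery is taken as a black box: it yields nearby representations with $I_{\gamma_i}\neq 0$, hence $I_{\gamma_i}\not\equiv 0$ on $W$, but that does not exclude $dI_{\gamma_i}=0$ at the holonomy (on the Neumann--Zagier deformation space, which is a branched double cover of $W$ over each cusp coordinate, one has $I_{\gamma_i}=4\sinh^2(u_i/2)$, vanishing to \emph{second} order in $u_i$; it is only on the character variety that the order drops to one). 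The ingredient that closes the argument is Garland's vanishing theorem (infinitesimal rigidity of the complete structure relative to the cusps): writing $V_i\subset H^1(\pi_1(T_i^2),\mathfrak{sl}_2\mathbb{C})$ for the image of the cohomology with coefficients in the invariant line of $\mathfrak{sl}_2\mathbb{C}$, one has $V_i=\ker(c_i)$ and $\ker\bigl(H^1(\Gamma_+,\mathfrak{sl}_2\mathbb{C})\to\bigoplus_i H^1(\pi_1(T_i^2),\mathfrak{sl}_2\mathbb{C})/V_i\bigr)=0$; together with $\dim H^1(\Gamma_+,\mathfrak{sl}_2\mathbb{C})=N$ this makes $(c_1,\ldots,c_N)$, hence $(dI_{\gamma_1},\ldots,dI_{\gamma_N})$, an isomorphism. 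Note also that your intermediate assertion that the Lagrangian image of $r$ projects to a $1$-dimensional subspace of each $H^1(\pi_1(T_i^2),\mathfrak{sl}_2\mathbb{C})$ does not follow from Lagrangianity (a Lagrangian in a product of symplectic planes can project onto a full factor), is generally false for $N\geq 2$, and is not needed once the kernel statement above is in place.
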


This theorem holds for any orientable hyperbolic manifold of finite volume, though we only use it for the orientation covering. 
Again, see \cite{BoileauPorti,KapovichBook} for a proof. As explained in these references, this is the algebraic part 
of the proof of Thurston's hyperbolic Dehn filling theorem using varieties of representations.

For a Klein bottle $K^2$, in Definition~\ref{def:distiguished} 
we considered the presentation of its fundamental group:
$$
\pi_1(K^2)=\langle a,b \mid aba^{-1}=b^{-1}\rangle
.$$
The elements
$a^2$ and $b$ are called \emph{distinguished} elements. 
Recall that, in terms of paths, those are represented by the unique homotopy classes of  loops
in the orientation covering that are invariant by the deck transformation (as unoriented curves).

Here we prove the following generalization of Theorem~\ref{Thm:dimdef}:

\begin{Theorem}
 \label{Thm:coordinatesNO}
 Let $M^3$ be a non-orientable manifold of finite volume with $k$ non-orientable cusps 
 and $l$ orientable cusps.
 For each horospherical Klein bottle, $K^2_i$, chose $
 \gamma_i\in\pi_1(K_i^2)$ distinguished, $i=1,\ldots, k$. 
  For each horospherical torus, $T^2_j$, chose a nontrivial 
 $\mu_j\in\pi_1(T_j^2)$, $j=1,\ldots, l$.
 
 There exists a neighborhood $U\subset \mathcal{R}(\Gamma, G)$ 
 of the holonomy of $M^3$
 such that 
 the map
 $$
( I_{\gamma_1},\ldots, I_{\gamma_k},I_{\mu_1},\ldots,I_{\mu_l} )\circ \widetilde{\mathrm{res}}:
U   \to 
\mathbb{R}^k\times \mathbb{C}^{l}
 $$
 defines a homeomorphism between $U$ and a neighborhood of the origin in 
$\mathbb{R}^k\times \mathbb{C}^{l}$.
\end{Theorem}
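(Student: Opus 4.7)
The plan is to combine Lemma~\ref{Lemma:liftres}, which identifies $U$ with the $\sigma^*$-fixed locus in $W\subset\mathcal{R}(\Gamma_+,\mathrm{PSL}(2,\mathbb{C}))$, with the coordinate Theorem~\ref{Thm:coordinates} applied to the orientation covering. The first step is to count and label the cusps of $M_+^3$. A Klein bottle cusp of $M^3$ lifts to a single torus cusp in $M_+^3$ (the orientation double cover of a Klein bottle is a torus), while each orientable torus cusp of $M^3$ lifts to two disjoint tori in $M_+^3$ interchanged by the deck transformation. Hence $M_+^3$ has $N=k+2l$ cusps. As peripheral elements I take the distinguished elements $\gamma_i$ for the $k$ Klein bottle lifts (which already lie in $\Gamma_+$ by their invariance property), and, for each orientable cusp of $M^3$, the pair $\mu_j$ and $\sigma_*(\mu_j)$, one in each of the two lifted tori. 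Theorem~\ref{Thm:coordinates} then gives that
\[
\Phi=(I_{\gamma_1},\dots,I_{\gamma_k},I_{\mu_1},I_{\sigma_*(\mu_1)},\dots,I_{\mu_l},I_{\sigma_*(\mu_l)})\colon W\to\mathbb{C}^{k+2l}
\]
is a bi-analytic homeomorphism onto a neighborhood of the origin.

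Next I would translate the $\sigma^*$-fixed condition $[\rho\circ\sigma_*]=[\overline{\rho}]$ into coordinates. Since $I_\gamma$ is a conjugacy invariant and $I_\gamma(\overline{\rho})=\overline{I_\gamma(\rho)}$, the condition is equivalent, for each chosen generator $\gamma$, to
\[
I_{\sigma_*(\gamma)}(\rho)=\overline{I_\gamma(\rho)}.
\]
For the distinguished Klein bottle elements, using the presentation $\pi_1(K^2)=\langle a,b\mid aba^{-1}=b^{-1}\rangle$ and the fact that the deck transformation of $T^2\to K^2$ is conjugation by $a$, one has $\sigma_*(a^2)=a^2$ and $\sigma_*(b)=b^{-1}$. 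Since $\mathrm{tr}(A^{-1})=\mathrm{tr}(A)$ in $\mathrm{PSL}(2,\mathbb{C})$, in both cases $I_{\sigma_*(\gamma_i)}=I_{\gamma_i}$ as functions on the variety of representations, so the fixed-locus condition becomes the reality condition $I_{\gamma_i}(\rho)\in\mathbb{R}$. For the orientable cusps, by construction $\sigma_*(\mu_j)$ is the peripheral element of the companion lifted torus, so the fixed-locus condition reads $I_{\sigma_*(\mu_j)}(\rho)=\overline{I_{\mu_j}(\rho)}$.

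Finally, $\Phi$ sends the $\sigma^*$-fixed locus homeomorphically onto the real subspace
\[
\bigl\{(x_1,\dots,x_k,z_1,\overline{z_1},\dots,z_l,\overline{z_l})\mid x_i\in\mathbb{R},\ z_j\in\mathbb{C}\bigr\}\subset\mathbb{C}^{k+2l},
\]
which is homeomorphic to $\mathbb{R}^k\times\mathbb{C}^l$ via the projection onto the first $k+l$ entries (the remaining entries being determined by complex conjugation). Composing with the homeomorphism $\widetilde{\mathrm{res}}\colon U\to\{[\rho]\in W\mid [\rho\circ\sigma_*]=[\overline{\rho}]\}$ from Lemma~\ref{Lemma:liftres} yields the claimed homeomorphism of $U$ with a neighborhood of the origin in $\mathbb{R}^k\times\mathbb{C}^l$.

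The main subtle point I expect is verifying that the distinguished peripheral elements behave as asserted under $\sigma_*$, up to the ambiguity in the choice of $\zeta\in\Gamma\setminus\Gamma_+$ and a change of base point. This is precisely what the characterization in Definition~\ref{def:distiguished} of distinguished elements as classes of unoriented loops invariant under the deck transformation guarantees: $\sigma_*(\gamma_i)$ is conjugate to $\gamma_i^{\pm 1}$ in $\Gamma_+$, and since $I_\gamma=I_{\gamma^{-1}}$ is a conjugacy invariant, $I_{\sigma_*(\gamma_i)}=I_{\gamma_i}$ holds unambiguously. Everything else is a routine transport of the involution through the bi-analytic coordinate chart.
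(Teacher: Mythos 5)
Your proposal is correct and follows essentially the same route as the paper: identify $U$ with the $\sigma^*$-fixed locus via Lemma~\ref{Lemma:liftres}, apply Theorem~\ref{Thm:coordinates} to the $k+2l$ cusps of $M_+^3$ with the peripheral elements $\gamma_i,\mu_j,\sigma_*(\mu_j)$, and observe that $\sigma_*(\gamma_i)=\gamma_i^{\pm1}$ together with $I$ commuting with complex conjugation cuts out exactly the reality conditions $x_i\in\mathbb{R}$ and $z_j=\overline{y_j}$. The only differences are cosmetic (ordering of coordinates and your explicit mention of $\operatorname{tr}(A^{-1})=\operatorname{tr}(A)$, which the paper leaves implicit).
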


\begin{proof}
Let $M^3_+\to M^3$ be the orientation covering. By construction, by the choice of distinguished
elements in the peripheral Klein bottles,  $\gamma_i\in \Gamma_+$. Furthermore, as the peripheral
tori are orientable, $\mu_j\in \Gamma_+$.
Hence 
$$
\{\gamma_1,\ldots,\gamma_k,\mu_1,\ldots,\mu_l,\sigma_*(\mu_1), \ldots, \sigma_*(\mu_l)\}
$$
gives a nontrivial element for each peripheral subgroup of $\Gamma_+$, where $\sigma_*$ is the group automorphism from Definition~\ref{dfn:sigma}
. We apply Theorem~\ref{Thm:coordinates}:
$$
I=(I_{\gamma_1}, \ldots, I_{\gamma_k},I_{\mu_1},\ldots,I_{\mu_l},
I_{\sigma_*(\mu_1)}, \ldots, I_{\sigma_*(\mu_l)})\colon W\to \mathbb C^{k+2l}
$$
is a bi-analytic map with a neighborhood of the origin.
Furthermore, as $\sigma
_*(\gamma_i)=\gamma_i^{\pm 1}$ and $(\sigma^*)^2=\operatorname{Id}$,
$$
I\circ\sigma^*\circ I^{-1}(x_1,\ldots,x_k,y_1,\ldots,y_l,z_1,\ldots, z_l)=
(x_1,\ldots,x_k,z_1,\ldots,z_l,y_1,\ldots, y_l).
$$
In addition, by construction $I$ commutes with complex conjugation. 
Hence, by Lemma~\ref{Lemma:liftres} the image
$
(I\circ\widetilde{\mathrm{res}})(U)
$
is the subset of a neighborhood of the origin in $\mathbb C^{k+2l}$ defined by
$$
\begin{cases}
x_i=\overline{x_i},& \forall i=1,\ldots,k,\textrm{ and } \\
z_j=\overline{y_j},& \forall j=1,\ldots,l. 
\end{cases}
$$
Finally, by combining Theorem~\ref{Thm:coordinates} and Lemma~\ref{Lemma:liftres}, the map 
$I\circ\widetilde{\mathrm{res}}$ is a homeomorphism between $U$ and its image.  
 \end{proof}

We can now state the  generalization of 
Theorem~\ref{Thm:main_thm_one_cusp} to several cusps. 
Here $D(1)\subset\mathbb C$ denotes a disk of radius~$1$.

\begin{theorem}%[Main theorem, several cusps version]
\label{Thm:main_thm_several_cusps}
        Let $M^3$ be a complete non-orientable hyperbolic $3$-manifold of finite
volume with $k$ non-orientable cusps and $l$ orientable cusps.
\begin{enumerate}[(a)]
\item If $M^3$ admits a geometric ideal triangulation
$\Delta$, then $\mathrm{Def}(M^3, \Delta)\cong (-1,1)^k\times D(1)^l$. 
The  parameters $(\pm t_1,\ldots, \pm l_k,\pm u_1,\ldots,\pm u_l) \in (-1,1)^k\times D(1)^l$ 
correspond to the same structure.

\item A neighborhood of the holonomy in $\mathcal{R}(\pi_1(M^3),\mathrm{Isom}(\mathbb{H}^3))$
is homeomorphic to $(-1,1)^k\times D(1)^l$.
\end{enumerate}
Furthermore,   the holonomy map
$\mathrm{Def}(M^3, \Delta) \to  \mathcal{R}(\pi_1(M^3),\mathrm{Isom}(\mathbb{H}^3))$
in coordinates writes as:
$$
\begin{array}{rcl}
 (-1,1)^k\times D(1)^l & \to &  (-1,1)^k\times D(1)^l
\\
( t_1,\ldots,  t_k, v_1,\ldots, v_l) & \mapsto &
( t_1^2,\ldots,  {t_k}^2, 
{v_1}^2,\ldots, {v_l}^2)
\end{array}
$$  
Namely, each interval $ (-1,1)$ is folded along $0$ and has image $[0,1)$, and disks $D(1)$ are mapped to disks by a  2:1 branched covering.
\end{theorem}

\begin{proof}
  Assertion (a) is
Theorem~\ref{deformation}, and assertion (b) is Theorem~\ref{Thm:coordinatesNO}. 
To describe the holonomy map in coordinates, for each cusp (orientable or not) choose  
an orientation preserving peripheral element $m$ 
and let $v$ be the logarithm of the holonomy of $m$ defined as 
in \eqref{dehn_equation} in a neighborhood of the origin in $\mathbb C$ (with $v\in i\mathbb{R}$ in the non-orientable case). In particular $v$ 
is a component of the local
coordinates of $\mathrm{Def}(M^3, \Delta)$. Furthermore, 
the holonomy of $m$
is conjugate to 
$$
\pm   \begin{pmatrix}
       e^{v/2} & 1 \\ 0 & e^{-v/2}  
        \end{pmatrix}
$$
therefore  it has trace $\pm 2\cosh\frac{v}{2}$, and this trace
is a component of the local
coordinates of  $\mathcal{R}(\pi_1(M^3),\mathrm{Isom}(\mathbb{H}^3))$. 
 Then the assertion follows from 
applying a suitable coordinate change.
\end{proof}

\section{Representations of the Klein bottle}
\label{S:RepKl}

Let $\pi_1(K^2)=\langle a,b | aba^{-1}=b^{-1} \rangle$ be a presentation of the fundamental group of the Klein bottle, and $G=\mathrm{Isom}(\mathbb{H}^3)\cong \mathrm{PSL}(2,\mathbb{C})\rtimes \mathbb{Z}_2$. The variety of representations $\mathrm{hom}(\pi_1(K^2),G)$ is identified with 
$$
\mathrm{hom}(\pi_1(K^2),G)\cong \{A,B \in G | ABA^{-1}=B^{-1}  \}.
$$
Topologically, we can expect to have, at least, $4$ (possibly empty) connected components according to the orientable nature of $A$ and $B$. We are interested in studying one of them.

\begin{Definition}
A representation $\rho \in \mathrm{hom}(\pi_1(K^2),G)$ is said to \emph{preserve the orientation type} if, for every $\gamma\in \pi_1(K^2)$, $\rho(\gamma)$ is an orientation-preserving isometry if and only if $\gamma$ is represented by and orientation-preserving loop of $K^2$. We denote this subspace of representations by
$$
\hom_+(\pi_1(K^2), G).
$$
\end{Definition}

Let $T^2 \rightarrow K^2$ be the orientation covering. The restriction map on the varieties of representations (without quotienting by conjugation) is:
$$
\mathrm{res}\colon \hom(\pi_1(K^2), G) \to \hom(\pi_1(T^2), \mathrm{PSL}(2,\mathbb{C}) ). 
$$

\begin{theorem}
\label{th:repklein}
Let $\rho\in \hom_+(\pi_1(K^2),G)$ preserve the orientation type and let $\rho(b)\neq \mathrm{Id}$. 
By writing $A=\rho(a)$, $B=\rho(b)$ as Möbius transformations, up to conjugation one of the following holds:
\begin{itemize}
	\item[a)] $A(z)=\overline z+1$, $B(z)=z+\tau i$, with $\tau\in \mathbb{R}_{> 0}$. 
	\item[a')] $A(z)=\overline z$, $B(z)=z+\tau i$,  with $\tau\in \mathbb{R}_{> 0}$. 
	\item[b)] $A(z)=e^l\overline{z}$, $B(z)=e^{\alpha i} z$, with  $l\in \mathbb{R}_{\geq 0}$, $\alpha \in (0,\pi]$. 
	\item[c)] $A(z)=e^{\alpha i}/\overline{z}$, $B(z)=e^l z$,  with  $l\in \mathbb{R}_{> 0}$, $\alpha \in [0,\pi]$. 
\end{itemize}
\end{theorem}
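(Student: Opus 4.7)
The plan is to exploit the relation $ABA^{-1}=B^{-1}$ to deduce that the anti-holomorphic isometry $A$ preserves the fixed point set $\mathrm{Fix}(B)\subset \partial_\infty \mathbb{H}^3$, and then to carry out a case analysis on the isometry type of the non-trivial element $B\in\mathrm{PSL}(2,\mathbb{C})$.

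First I will treat the parabolic case. After conjugating so that $B$ fixes $\infty$, I write $B(z)=z+\tau_0$ and $A(z)=\alpha\bar z+\beta$ with $\alpha\in\mathbb{C}^*$, and compute $ABA^{-1}$ directly; the relation yields $\alpha\bar{\tau_0}=-\tau_0$, in particular $|\alpha|=1$. Using the remaining freedom to conjugate by a dilation $\lambda z$ and by a translation $z\mapsto z+\mu$, I normalize $\tau_0$ to $i\tau$ with $\tau>0$, which forces $\alpha=1$, and reduce $\beta$ to a real number. A further real dilation, combined if needed with the anti-holomorphic conjugation $z\mapsto\bar z$, then normalizes $\beta$ to either $0$ (case (a$'$)) or $1$ (case (a)).

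Next I will handle the non-parabolic case. Sending the two fixed points of $B$ to $0$ and $\infty$, I write $B(z)=\mu z$ with $\mu\neq 0,1$, and split according to whether $A$ fixes the pair $\{0,\infty\}$ pointwise or swaps it. If $A$ fixes both endpoints, then $A(z)=s\bar z$, and computing $ABA^{-1}(z)=\bar\mu z$ and matching with $B^{-1}(z)=z/\mu$ forces $|\mu|=1$, so $B$ is elliptic; the surviving conjugation freedom (scaling by $\lambda z$ and the swap $z\mapsto 1/z$) normalizes $s=e^l$ with $l\geq 0$ and $\mu=e^{i\alpha}$ with $\alpha\in(0,\pi]$, yielding case (b). If instead $A$ swaps $0$ and $\infty$, then $A(z)=k/\bar z$, and the analogous computation gives $ABA^{-1}(z)=z/\bar\mu$, forcing $\mu\in\mathbb{R}$, so $B$ is hyperbolic; normalizing $\mu=e^l$ with $l>0$, scaling $k$ by $|\lambda|^2$ and conjugating by $z\mapsto\bar z$ to replace $k$ by $\bar k$, I arrange $k=e^{i\alpha}$ with $\alpha\in[0,\pi]$, yielding case (c). A loxodromic $B$ with $|\mu|\neq 1$ and $\mu\notin\mathbb{R}$ is excluded in both subcases, which is consistent with the statement.

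The conceptual content is the pair of identities $|\mu|=1$ and $\mu\in\mathbb{R}$ produced by the relation $ABA^{-1}=B^{-1}$ applied to an anti-holomorphic $A$: they correlate the isometry type of $B$ with the behaviour of $A$ on $\mathrm{Fix}(B)$, and they rule out genuine loxodromic $B$. The main obstacle I expect is the bookkeeping of the residual conjugation freedom after each normalization, in order to land in the exact parameter ranges of the statement; in particular, distinguishing case (a) from case (a$'$), and getting the correct open/closed endpoints for the $\alpha$-intervals in cases (b) and (c), requires careful attention to the surviving real dilations and to the action of the anti-holomorphic conjugation $z\mapsto\bar z$.
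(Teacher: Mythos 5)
Your argument is correct in outline and reaches the same normal forms as the paper, but by a genuinely different route. The paper first restricts $\rho$ to the orientation covering $T^2\to K^2$, invokes the standard classification of representations of $\pi_1(T^2)$ in $\mathrm{PSL}(2,\mathbb{C})$ up to conjugation (parabolic, degenerate parabolic, or diagonal), and then determines which orientation-reversing $A$ with $A^2=L$ satisfy $ABA^{-1}=B^{-1}$. You instead work directly with the pair $(A,B)$, using the observation that the relation forces $A$ to preserve $\mathrm{Fix}(B)$, and classify according to the isometry type of $B$ and the action of $A$ on its fixed points. Your route is more self-contained (it does not presuppose the classification of abelian representations) and makes the mechanism transparent; the paper's route ties the statement to the restriction to the peripheral torus, which is the object used elsewhere in the paper. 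The normalization bookkeeping (residual dilations and translations, the swap $z\mapsto 1/z$, the conjugation $z\mapsto\overline z$) is essentially the same in both proofs and you handle it correctly.

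There is, however, one step that fails --- and it fails in the paper's own proof as well, so this is not a defect relative to the paper, but you should be aware of it. In the subcase where $B(z)=\mu z$ and $A(z)=k/\overline z$ swaps $0$ and $\infty$, the relation only gives $\mu=\overline\mu$, i.e.\ $\mu\in\mathbb{R}\setminus\{0,1\}$. You conclude ``so $B$ is hyperbolic'' and normalize $\mu=e^l$; this silently discards $\mu<0$. Yet for $\mu<0$ the pair $\bigl(k/\overline z,\ \mu z\bigr)$ does satisfy $ABA^{-1}=B^{-1}$ and does preserve the orientation type: for $\mu\neq-1$ the element $B$ is loxodromic with rotation angle $\pi$ and $\operatorname{trace}^2(B)=2-2\cosh l<0$, so it is not conjugate to any of the four listed normal forms (which have $\operatorname{trace}^2(B)\geq 0$); for $\mu=-1$ the element $B$ is elliptic of angle $\pi$, but $A$ swaps $\mathrm{Fix}(B)$ instead of fixing it pointwise, so the pair is not conjugate to case b) either. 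The paper makes the same unjustified reduction when it asserts that in the diagonal case $M$ ``corresponds to a real dilation.'' The omission is harmless for the rest of the paper, since such representations have $\operatorname{trace}^2(B)$ bounded away from $4$ and therefore cannot arise as small deformations of the peripheral (parabolic) holonomy, but a complete proof of the theorem exactly as stated cannot be given without either adding this extra family or restricting the hypotheses.
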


\begin{proof}
 Let $G^0=\mathrm{PSL}(2, \mathbb{C})\vartriangleleft G$ be the connected component of the identity. The variety of representations $\hom(\pi_1(T^2), G^0)/G^0$ is  well known. A representation $[\rho_0]$ in this variety is the class of either a parabolic representation, $\rho_0(l)(z)=z+1$, $\rho_0(m)(z)=z+\tau$, $\tau \in \mathbb{C}$, a parabolic degenerated one, $\rho_0(l)(z)=z$, $\rho_0(m)(z)=z+\tau$, $\tau \in \mathbb{C}$, or a hyperbolic one $\rho_0(l)(z)=\lambda z$, $\rho_0(m)(z)=\mu z$, $\lambda, \mu \in \mathbb{C}$, where $\pi_1(T^2)=\langle l, m | lm=ml \rangle$.
 
For $\rho_0 =\mathrm{res}(\rho)$, let $A= \rho(a)$, $B= \rho(b)$ where $a,b$ are generators of $\pi_1(K^2)$, 
and $L=\rho(l)$, $M=\rho(m)$. The following is satisfied:
 \begin{align}
 (A^2,B)= & (L,M), \tag{Restriction of a representation to the torus} \\
 ABA^{-1}= & B^{-1}. \tag{Klein bottle relation}
 \end{align}
In fact, in order for $\rho_0$ to be a restriction, there must be $A$ and $B$ satisfying the previous conditions. We prove the theorem using these equations.
 
If $[\rho]$ is in the parabolic case, by hypothesis $\tau\neq 0$. Then, the solution is unique and, $A(z)=\overline{z}+1$, $B(z)=z+\tau i$, $\tau \in \mathbb{R}\setminus \{0\}$, hence $L(z)=z+2$, $M(z)=z+\tau i$. Similarly, for the degenerated parabolic case, $A(z)=\overline{z}$, $B(z)=z+\tau i$, $\tau \in  \mathbb{R}\setminus \{0\}$.

On the other hand, for $[\rho]$ hyperbolic, either $L$ corresponds to a real dilation and $M$ to a rotation, or the other way around.  In the case $L(z)=e^{2l}z$, $M(z)=e^{\alpha i}z$, $l\in \mathbb{R}$, $\alpha \in (-\pi, \pi]$ the representation can be written as the restriction of several representations of the Klein bottle, but all of them are conjugated to $A(z)=e^{l} \overline{z}$, $B(z)=e^{\alpha i}z$. A similar situation happens when $L(z)=e^{2\alpha i}z$, $M(z)=e^{l}z$, $ l \in \mathbb{R}$, $\alpha \in (-\pi, \pi]$, obtaining $A(z)=e^{\alpha i}/\overline{z},  B(z)=e^lz$. However, in the last case, we should note down that for every such representation $[\rho]$, we get two non-conjugated representations $[\rho_1], [\rho_2]$ such that $[\rho_0]=\mathrm{res}([\rho_1])=\mathrm{res}([\rho_2])$, them differing in $A_1(z)=e^{\alpha i}/\overline{z}$, $A_2(z)=e^{(\alpha+\pi)i}/\overline{z}=-e^{\alpha i}/\overline{z}$.

Thus, we obtain a classification of representations in $\hom(\pi_1(K^2),G)/G^0$. 
To get the classification quotienting by the whole group, $\hom(\pi_1(K^2),G)/G$, we only have to see how the complex conjugation $c$ acts by conjugation on each representation: In $a)$, $a')$, $c$ maps $z+\tau i \mapsto z-\tau i$; in $b)$, $e^{\alpha i}z \mapsto e^{-\alpha i}z$; and in $c)$, $e^{\alpha i}/\overline{z} \mapsto e^{-\alpha i}/\overline{z}$. The choice $\alpha >0, l>0$ in $b), c)$  is obtained by taking into account that $[\rho]=[\rho^{-1}]$.
\end{proof}

\begin{Definition}
\label{def:namereps} According to the different cases in Theorem~\ref{th:repklein}, a representation 
$\rho\in \hom_+(\pi_1(K^2),G)$ is called:
\begin{itemize}
 \item \emph{parabolic non-degenerate}  in case a) and \emph{parabolic degenerate} in case a'),
 \item \emph{type I} in case b), and 
 \item \emph{type II} in case c).
\end{itemize}
Furthermore, type I or II are called non-degenerate if $l\neq 0$ or $\alpha\neq 0$
respectively, and degenerate otherwise.
\end{Definition}

\begin{remark}
\label{rmk:non-deg}
The holonomy of a non-orientable cusp restricts to a representation of
the Klein bottle that preserves the orientation type and is parabolic non-degenerate.

Furthermore, deformations of this representation still preserve the orientation type
and are non-degenerate (possibly of type I or II), by continuity.
\end{remark}
 
 For $\gamma\in \pi_1(T^2) \vartriangleleft \pi_1(K^2)$,  recall from \eqref{eqn:Igamma} that
 $$
 \begin{array}{rcl}
  I_{\gamma}\colon \hom(\pi_1(K^2), G) & \to & \mathbb{C} \\
  \rho& \mapsto & (\operatorname{trace}_{\mathrm{PSL}(2,\mathbb{C})}(\rho(\gamma)))^2-4,
 \end{array}
 $$
where $\operatorname{trace}_{\mathrm{PSL}(2,\mathbb{C})}$ means trace as matrix in
 $\mathrm{PSL}(2,\mathbb{C})$.

\begin{lemma}
\label{Lemma:typeofrep}
Let $\rho\in\hom(\pi_1(K^2), G)$ preserve the orientation type and $\rho(b)\neq \mathrm{Id}$. Then:
\begin{itemize}
 \item If $\rho$ is parabolic, then $I_{\gamma}(\rho)=0$, $\forall \gamma\in \pi_1(T^2) $.
 \item If $\rho$ is of type I, then  $I_{ a^2}(\rho)\geq 0$ and $I_{b}(\rho)< 0$.
 \item If $\rho$ is of type II, then  $I_{ a^2}(\rho)\leq 0$ and $I_{b}(\rho)> 0$.
\end{itemize}

\begin{proof}
	It is a straightforward computation from Theorem~\ref{th:repklein}.
\end{proof}
\end{lemma}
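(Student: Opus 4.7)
The proof will be a direct case-by-case computation using the normal forms provided by Theorem~\ref{th:repklein}. In each case I would write down explicit matrix representatives for $A=\rho(a)$ and $B=\rho(b)$ (or rather for $A^2$ and $B$, since we only need the restrictions to $\pi_1(T^2)$), compute their traces as elements of $\mathrm{PSL}(2,\mathbb{C})$, and then plug into the defining formula $I_\gamma(\rho)=(\mathrm{trace}(\rho(\gamma)))^2-4$.

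Concretely, my plan is as follows. First, for a M\"obius transformation of the form $z\mapsto e^\lambda z$ (hyperbolic or elliptic according to whether $\lambda$ is real or imaginary) I use the matrix $\mathrm{diag}(e^{\lambda/2},e^{-\lambda/2})$, whose trace is $2\cosh(\lambda/2)$; for a parabolic $z\mapsto z+c$, the trace is $\pm2$; for the identity the trace is $\pm 2$ as well. Second, I compute $A^2$ in each of the four cases of Theorem~\ref{th:repklein}: in case (a), $A^2(z)=z+2$; in case (a'), $A^2=\mathrm{Id}$; in case (b), $A^2(z)=e^{2l}z$; in case (c), $A^2(z)=e^{2\alpha i}z$. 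This is where one must be careful with the antiholomorphic nature of $A$: the composition of two antiholomorphic maps is holomorphic, and complex conjugation turns $e^l$ into $e^l$ but $e^{\alpha i}$ into $e^{-\alpha i}$, which is what produces the doubling pattern.

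Plugging in: in the two parabolic cases, $A^2$ is either a translation with trace $2$ or the identity, and $B$ is a translation with trace $2$, so $I_{a^2}(\rho)=0$ and $I_b(\rho)=0$. In type I (case (b)), the trace of $A^2$ is $2\cosh(l)$, giving $I_{a^2}(\rho)=4\sinh^2(l)\geq 0$, and the trace of $B$ is $2\cos(\alpha/2)$, giving $I_b(\rho)=-4\sin^2(\alpha/2)$, which is strictly negative because $\alpha\in(0,\pi]$ (guaranteed by the hypothesis $\rho(b)\neq\mathrm{Id}$, which forces $\alpha\neq 0$). In type II (case (c)), the trace of $A^2$ is $2\cos(\alpha)$, giving $I_{a^2}(\rho)=-4\sin^2(\alpha)\leq 0$, and the trace of $B$ is $2\cosh(l/2)$, giving $I_b(\rho)=4\sinh^2(l/2)$, which is strictly positive since in this case $l>0$.

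There is no real obstacle here; the statement is essentially a summary of what the trace functions look like on the normal forms of Theorem~\ref{th:repklein}. The only minor pitfall is to remember the sign conventions: the trace in $\mathrm{PSL}(2,\mathbb{C})$ is only defined up to sign, but $I_\gamma$ involves its square and is therefore well defined. Once that is noted, the whole lemma reduces to the four trace computations above.
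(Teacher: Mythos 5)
Your proposal is correct and is exactly the computation the paper has in mind: the paper's proof is literally ``a straightforward computation from Theorem~\ref{th:repklein}'', and your case-by-case evaluation of $A^2$ and $B$ on the normal forms (with the correct handling of the antiholomorphic factor and of the sign ambiguity of the $\mathrm{PSL}(2,\mathbb{C})$ trace) fills in precisely those details.
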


\begin{corollary}\label{Coro:notrealized}
\begin{enumerate}[a)]
 \item The holonomy of a representation in $\mathrm{Def}(M,\Delta)$ is of type I
\item Representations in a neighborhood of $[\mathrm{hol}]$  in $\mathcal R(M^3,G)$ 
are or both, type I and II.
\item In particular, the holonomy map
$\mathrm{Def}(M,\Delta)\to \mathcal R(M^3,G)$
is not surjective in a neighborhood of the holonomy.
\end{enumerate}
\end{corollary}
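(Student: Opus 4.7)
The plan is to combine three ingredients that are already in place: the real constraint $L = \mathrm{hol}'(a^2) \in \mathbb{R}$ coming from the $\iota$-invariance built into $\mathrm{Def}(M,\Delta)$, the trace classification in Lemma~\ref{Lemma:typeofrep}, and the local coordinate from Theorem~\ref{Thm:coordinatesNO} specialised to $k=1$, $l=0$.

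For part (a), I would take $\rho$ with $[\rho] \in \mathrm{Def}(M,\Delta)$. The identification of $\mathrm{Def}(M,\Delta)$ with the $\iota$-fixed deformations of the orientation covering, whose generalized Dehn filling coefficients are of the form $(0,q)$, together with Remark~\ref{rmk:hol_lm} yields $L \in \mathbb{R}$ and $|M| = 1$. Continuity from the complete structure (where $L = 1$) forces $L > 0$ throughout a small enough neighbourhood, so $\rho(a^2)$ has real eigenvalues $\sqrt{L}^{\pm 1}$ and absolute trace at least $2$, i.e.\ $I_{a^2}(\rho) \geq 0$. By Remark~\ref{rmk:non-deg} the representation preserves the orientation type and is non-degenerate, hence Lemma~\ref{Lemma:typeofrep} rules out type II and leaves only type I.

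For part (b), Theorem~\ref{Thm:coordinatesNO} applied with one non-orientable cusp gives a homeomorphism $(I_{a^2} \circ \widetilde{\mathrm{res}})\colon U \to \mathbb{R}$ onto a neighbourhood of $0$. In particular $I_{a^2}$ attains both strictly positive and strictly negative values on $U$, and by Remark~\ref{rmk:non-deg} together with Lemma~\ref{Lemma:typeofrep} the sign of $I_{a^2}$ exactly distinguishes type I from type II. Both types are therefore realised in every neighbourhood of $[\mathrm{hol}]$.

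Part (c) is then immediate: by (a) the image of $\mathrm{Def}(M,\Delta)$ in $\mathcal{R}(\pi_1(M^3),G)$ is contained in $\{[\rho] \in U : I_{a^2}([\rho]) \geq 0\}$, while by (b) this is a proper subset of $U$. The only delicate point is part (a); the main obstacle there is the clean bookkeeping that translates the $\iota$-invariant coordinate $L \in \mathbb{R}$ into the sign of $I_{a^2}$ via the trace identity, and in particular guaranteeing $L > 0$ in a small neighbourhood of the complete structure, which is what excludes the elliptic type II behaviour.
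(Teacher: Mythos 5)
Your proposal is correct and follows essentially the same route as the paper, which derives (a) from Remark~\ref{rmk:hol_lm} together with Lemma~\ref{Lemma:typeofrep} (via Remark~\ref{rmk:non-deg} to place the peripheral representation in the classification) and (b) from Theorem~\ref{Thm:coordinatesNO} together with Lemma~\ref{Lemma:typeofrep}, with (c) immediate. The only cosmetic difference is that for (a) you use the longitude coordinate $L=\mathrm{hol}'(a^2)\in\mathbb{R}$ plus a continuity argument to get $L>0$ and hence $I_{a^2}\geq 0$, whereas one can read off the other half of Remark~\ref{rmk:hol_lm}, namely $|M|=|\mathrm{hol}'(b)|=1$, which gives $I_{b}\leq 0$ and excludes type II directly without needing the positivity step.
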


\begin{proof}
Assertion a) follows from Remark~\ref{rmk:hol_lm} and Assertion b) from Theorem~\ref{Thm:coordinatesNO},
in both cases using Lemma~\ref{Lemma:typeofrep}.
\end{proof}

\section{Metric completion}
\label{S:Completion}

As we deform non-compact manifolds, the deformations into non-complete manifolds are not unique
(eg~one can consider proper open subset of a non-complete manifold).
We are not discussing the different issues related to this non-uniqueness, 
just the existence of a deformation into  a metric that can be complete as a conifold (see below).

The main result of this section is Theorem~\ref{Theorem:completion}. In the orientable case, the metric completion after 
deforming an orientable cusp is a singular space with a singularity called of  
\emph{Dehn type} (that include non-singular manifolds), 
see Hodgson's thesis \cite{Hodgson} and Boileau--Porti \cite[Appendix~B]{BoileauPorti}. 
In the non-orientable case, the singularity is more specific, a so called  conifold. 
 
\subsection{Conifolds and cylindrical coordinates}
\label{Subsection:conifolds}

A \emph{conifold} is a metric length space locally isometric to the metric cone of constant curvature on a spherical
conifold of dimension one less, see for instance \cite{BLP}.
When, as topological space, a conifold is homeomorphic to a manifold, it is called a \emph{cone manifold}, but in general it is only a pseudo-manifold.
In dimension 2 conifolds are also cone manifolds, but in dimension three there may be points with a neighborhood
homeomorphic to the cone on a projective plane $P^2$.

We are interested in three local models of singular spaces, that as conifolds are:
\begin{itemize}
 \item The hyperbolic cone over a round sphere $S^2$. This corresponds to a point with a non-singular hyperbolic metric.
 \item The hyperbolic cone over $S^2(\alpha,\alpha)$, the sphere with two cone points of angle $\alpha$, that  is
 the spherical suspension of a circle of perimeter $\alpha$. It corresponds to a singular axis of angle~$\alpha$.
 \item The hyperbolic cone over $P^2(\alpha)$, the projective plane with a cone point of angle $\alpha$. This is the 
 quotient of the previous one by a metric involution, which is the antipodal map on each concentric sphere. 
\end{itemize}

Next we describe metrically those local models, by using cylindrical coordinates in the hyperbolic space. 
These coordinates are defined from a geodesic line $g$ in $\mathbb H^3$, and we fix a point in the unit normal bundle to $g$,
ie~a vector $\vec u$ of norm $1$ and perpendicular to $g$.
Cylindrical coordinates give a diffeomorphism:
$$
\begin{array}{rcl}
 \mathbb{H}^3\setminus g & \overset\cong\longrightarrow & (0,+\infty)\times \mathbb{R}/2\pi\mathbb{Z}\times \mathbb{R} \\
   p & \longmapsto & (r,\theta, h)
\end{array}
$$
where $r$ is the distance between $g$ and $p$, 
$\theta$ is the angle parameter (the angle between the parallel transport of $\vec u$ and 
the tangent vector to the orthogonal geodesic from $g$  to $p$) and $h$ is  the arc parameter of $g$, 
the signed distance between the base point of $\vec u$
and the orthogonal projection from $p$ to $g$, Figure~\ref{Figure:Cylindrical}.

\begin{figure}[h]
\begin{center}
\begin{tikzpicture}[line join = round, line cap = round, scale=1]
\draw[thick] (0,0)--(0,3);
\draw (0,2)--(1.5,1.7);
 \draw[ ->] (0,2)--(.5,1.6);
 \draw[ ->] (0,1)--(.5,.6);
\draw[thin] (0.5,1.9)  arc [radius=.5, start angle = -30, end angle = -55]; 
\draw(1.5,1.7)[fill=black] circle(.03);
\draw (-.3, 2.7) node{$g$}; 
\draw (1.8, 1.7) node{$p$};
\draw (1, 2) node{$r$};
 \draw (0.65, 1.7) node{$_\theta$}; 
\draw (.65, 0.7) node{$\vec u$}; 
 \draw (-.15,1.5) node{$ \left\{ \rule{0pt}{5mm} \right. $};
 \draw (-.4,1.5) node{$h$}; 
\end{tikzpicture}
\end{center}
 \caption{Cylindrical coordinates.}
\label{Figure:Cylindrical}
\end{figure}
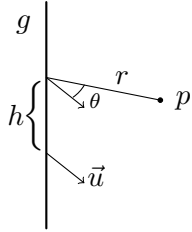

In the upper-half space model of $\mathbb H^3$, if $g$ is the geodesic from $0$ and $\infty$, then there exists a choice of coordinates 
(a choice of $\vec u$)
so that 
the projection from $g$ to the ideal boundary $\partial_\infty \mathbb{H}^3$ maps a point with
cylindrical coordinates $(r,\theta, h)$ to $e^{h+i\theta}\in\mathbb C$, Figure~\ref{Figure:Projection}.
A different choice of $\vec u$ would yield instead $\lambda e^{h+i\theta}\in\mathbb C$, for some $\lambda\in\mathbb C\setminus\{0\}$.

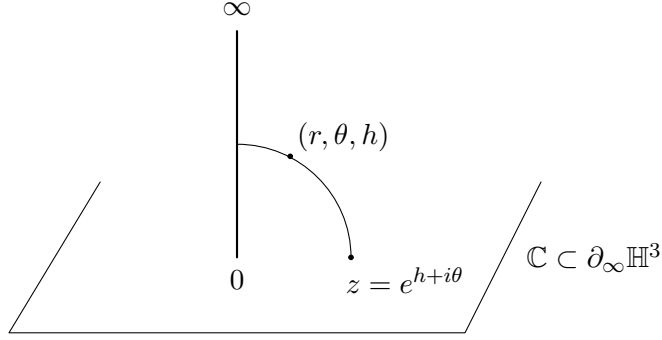
\begin{figure}[h]
\begin{center}
\begin{tikzpicture}[line join = round, line cap = round, scale=1]
\draw[thick] (0,0)--(0,3);
\draw (0,1.5) arc [radius=1.5, start angle = 90, end angle=0];
\draw(1.5,0)[fill=black] circle(.03);
\draw(.7,1.34)[fill=black] circle(.03);

\draw[thin] (-1.8,1)--(-3,-1)--(3,-1)--(4,1);

\draw (0,-.3) node{$0$ };
\draw (0,3.3) node{$\infty$};
\draw (2.21,-.3) node{$z=e^{h+i\theta}$ };
\draw(1.4,1.6) node{$(r,\theta,h)$ };
\draw(4.7,0) node{$ \mathbb{C}\subset \partial_\infty\mathbb{H}^3 $};
\end{tikzpicture}
\end{center}
 \caption{Orthogonal projection to $\partial_\infty\mathbb H^3$ with $g$ the geodesic with ideal end-points $0$ and $\infty$.}
\label{Figure:Projection}
\end{figure}

The hyperbolic metric on $\mathbb H^3$ in these coordinates is 
$$
d r^2+\sinh^2 (r) d\theta^2+\cosh^2 (r) d h^2
$$
More precisely, $\mathbb H^3$ is the metric completion of $(0,+\infty)\times \mathbb{R}/2\pi\mathbb{Z}\times \mathbb{R}$
with this metric.

\begin{Definition}
 For $\alpha\in (0,2\pi)$, $\mathbb{H}^3(\alpha)$ is the metric completion of 
 $(0,+\infty)\times \mathbb{R}/2\pi\mathbb{Z}\times \mathbb{R}$ for the metric
 $$
d s^2= d r^2+\left(\frac{\alpha}{2\pi }\right)^2 \sinh^2 (r) d\theta^2+\cosh^2 (r) d h^2
 $$
\end{Definition}

The metric space   $\mathbb{H}^3(\alpha)$  may be visualized by taking a sector in $\mathbb{H}^3$ of angle $\alpha$ and identifying its sides by a rotation. 
Alternatively, with the change of coordinates $\widetilde \theta = \frac{\alpha}{2\pi } \theta$, $\mathbb{H}^3(\alpha)$ is the metric completion of
$(0,+\infty)\times \mathbb{R}/\alpha \mathbb{Z}\times \mathbb{R}$ for the metric 
$
d r^2+\sinh^2 (r) d{\widetilde\theta}^2+\cosh^2 (r) d h^2
$.

\begin{Remark}
 The metric models are:
\begin{itemize}
 \item For the non-singular case (the cone on the round sphere) it is $\mathbb{H}^3$.
 \item For the singular axis (the cone on $S^2(\alpha,\alpha)$) it is $\mathbb{H}^3(\alpha)$.
 \item For the cone on $P^2(\alpha)$, it is the quotient
 $$
 \mathbb{H}^3(\alpha)/(r,\theta,h)\sim (r,-\theta,-h).
 $$
\end{itemize} 
\end{Remark}
 
 \subsection{Conifolds bounded by a Klein bottle}
 
We keep the notation of Subsection~\ref{Subsection:conifolds}, with cylindrical coordinates. Before discussing conifolds bounded by 
a Klein bottle, we describe a cone manifold bounded by a torus.

\begin{Definition}
A  \emph{solid torus with singular soul} is $ \mathbb{H}^3(\alpha)/\!\!\sim$,
where $\sim$ is the relation induced by the isometric action of $\mathbb{Z}$ generated by
$$
(r,\theta,h)\mapsto (r, \theta+\tau, h+L)
$$
for $\tau\in \mathbb{R}/2\pi\mathbb{Z}$ and $L>0$.
\end{Definition}
 
The space  $ \mathbb{H}^3(\alpha)/\!\!\sim$ is a solid torus of infinite radius with singular soul of cone angle $\alpha$, 
length of the singularity $L>0$ and torsion parameter $\tau\in \mathbb{R}/2\pi\mathbb Z$
(the rotation angle induced by parallel transport along the singular geodesic is 
$\frac{\alpha}{2\pi}\tau\in \mathbb{R}/\alpha\mathbb Z$).

By considering the metric neighborhood of radius $r_0>0$ on the singular soul, we get a compact solid torus,
bounded by a $2$-torus. This compact solid torus depicts a tubular neighborhood of a component of the singular 
locus of a cone manifold (compare Hodgson--Kerckhoff \cite{HodgsonKerckhoff} and
Hodgson's thesis \cite{Hodgson}).

We describe two conifolds bounded by a Klein bottle, that are a quotient of this solid torus by an involution.

\begin{Definition}
A  \emph{solid Klein bottle with singular soul} is $ \mathbb{H}^3(\alpha)/\!\!\sim$,
where $\sim$ is the relation induced by the isometric action of $\mathbb{Z}$ generated by
$$
(r,\theta,h)\mapsto (r, -\theta, h+L)
$$
for $L>0$.
\end{Definition}

 The space  $ \mathbb{H}^3(\alpha)/\!\!\sim$ is a solid Klein bottle of infinite radius with singular soul of cone angle~$\alpha$, 
and length of the singularity $L>0$. We may consider a metric tubular neighborhood
of radius $r_0$, bounded by a Klein bottle.
Its orientation cover is a solid torus with singular soul, cone angle $\alpha$, length of the singularity $2L$ and torsion parameter $\tau=0$.

\begin{Definition}
The   \emph{disc orbi-bundle with singular soul} is $ \mathbb{H}^3(\alpha)/\!\!\sim$,
where $\sim$ is the relation induced by two isometric involutions:
$$
\begin{array}{rcl}
(r,\theta,h)&\mapsto &(r, \theta+\pi, -h)\\
(r,\theta,h)&\mapsto &(r, \theta+\pi, 2L-h)
\end{array}
$$
for $L>0$.
\end{Definition}

To describe this space, it is useful first to look at the action on the preserved geodesic, corresponding to $r=0$. 
These involutions map $h\in\mathbb{R}$ to $-h$ and to $2L-h$ respectively. Thus it is the action of the infinite dihedral group
$\mathbb Z_2*\mathbb Z_2$ on a line generated by two reflections. Its orientation preserving subgroup is $\mathbb{Z}$
acting by translations on $\mathbb{R}$. Thus $\mathbb{R}/\mathbb{Z}$ is a circle, and 
$\mathbb{R}/(\mathbb Z_2*\mathbb Z_2)$ is an orbifold. The solid torus is a disc bundle over the circle, and our space is an orbifold-bundle
over $\mathbb{R}/(\mathbb Z_2*\mathbb Z_2)$ with fibre a disc. 

This space is the quotient of an involution on the solid torus. View the solid torus as two $3$-balls joined by two 1-handles,
Figure~\ref{Figure:SolidTorus}.
On each 3-ball apply the antipodal involution (on each concentric sphere of given radius), and extend this involution by permuting the 1-handles.
The quotient of each ball is the (topological) cone on $P^2$, hence our space is the result of joining two cones on $P^2$ by a $1$-handle. 
Its boundary is the connected sum
$P^2\# P^2\cong K^2$.

\begin{figure}[h]
\begin{center}
\begin{tikzpicture}[line join = round, line cap = round, scale=.7]

\begin{scope}[shift={(3,0.3)}, rotate=180]
\draw[fill=white] (0.5, 0) to[out=-15, in=180+15] (2.5, 0) to [out=-15, in=+15] (2.5,-.5) to[out=180+15, in=-15]  (0.5, -.5) to[out=180-15, in=180+15] 
cycle ;
\draw[fill=cyan!30!white, opacity=.3] (0.5, 0) to[out=-15, in=180+15] (2.5, 0) to [out=-15, in=+15] (2.5,-.5) to[out=180+15, in=-15]  (0.5, -.5) to[out=180-15, in=180+15]  cycle ; 
\end{scope}

 \draw[fill=white] (0,0) circle [radius=1];
 \draw[fill=white] (3,0) circle [radius=1];

\shadedraw[ white, fill=green!30!white, opacity=.3] (0,0) circle [radius=1];
 \draw (0,0) circle [radius=1];
\draw[very thin] (-1,0) arc [ radius=2, start angle=240, end angle= 300 ] ; 
\begin{scope}[shift={(3,0)}]
\shadedraw[ white, fill=green!30!white, opacity=.3] (0,0) circle [radius=1];
 \draw (0,0) circle [radius=1];
\draw[very thin] (-1,0) arc [ radius=2, start angle=240, end angle= 300 ] ;  
\end{scope}

\draw[fill=white] (0.5, 0) to[out=-15, in=180+15] (2.5, 0) to [out=-15, in=+15] (2.5,-.5) to[out=180+15, in=-15]  (0.5, -.5) to[out=180-15, in=180+15] 
cycle ;
\draw[fill=cyan!30!white, opacity=.4] (0.5, 0) to[out=-15, in=180+15] (2.5, 0) to [out=-15, in=+15] (2.5,-.5) to[out=180+15, in=-15] 
(0.5, -.5) to[out=180-15, in=180+15]  cycle ;
\end{tikzpicture}
\end{center}
\caption{A solid torus as  two $3$-balls joined by two $1$-handles.}
\label{Figure:SolidTorus}
\end{figure}
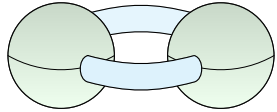

The singular locus of the  disc orbi-bundle $ \mathbb{H}^3(\alpha)/\!\!\sim$ is an interval (the underlying space  
of the orbifold bundle) of length $L$. 
The interior
points of the singular locus have cone angle $\alpha$, and the boundary points of the interval are precisely the points where it is not a
topological manifold.

Again $ \mathbb{H}^3(\alpha)/\!\!\sim$ has $\infty$ radius, and the metric tubular neighborhood of radius $r$ of the singularity
is bounded by a Klein bottle. It is the quotient of a solid torus of length $2L$ and torsion parameter $\tau=0$ by an isometric involution with two fixed points
(thus, as an orbifold, its orientation orbi-covering is a solid torus).

\begin{remark}  The boundary of both, the solid Klein bottle and the disc orbi-bundle,
is a Klein bottle. In both cases the holonomy preserves the orientation type, but the type of the presentation
as in Definition~\ref{def:namereps} is different:
\begin{enumerate}[a)]
 \item The holonomy of the boundary of a solid Klein bottle
 with singular soul is a representation of type I.
 
 \item The holonomy of the boundary of a disc orbi-bundle over a singular interval is of type II.
 \end{enumerate}
\end{remark}

For a non-orientable cusp, the holonomy of the peripheral torus is either parabolic non-degenerate, of type I or of type II, also nondegenerate (Remark~\ref{rmk:non-deg}).
The aim of next section is to prove that the deformations can be defined
so that the metric completion is either
solid Klein bottle with singular soul or a disc orbi-bundle with singular soul,
according to the type. This is the content of Theorem~\ref{Theorem:completion}, that we prove at the end of the section.

\subsection{The radial structure}

Let $M^3$ be a non-compact hyperbolic 3-manifold of finite volume. We deform its holonomy representation and
accordingly we deform its hyperbolic metric. Nonetheless,
incomplete metrics are not unique, so here we give a statement about the existence of a maximal structure, which corresponds to the one completed in Theorem~\ref{Theorem:completion}.

Let $[\rho] \in \mathcal{R}(\pi_1(M^3), G)$ be a deformation of its complete structure. 
There is some nuance in associating to $[\rho]$ a hyperbolic structure which is made explicit
by Canary--Epstein--Green in \cite{CEG}. Here, the authors conclude that every deformation with a given holonomy representation are related by an isotopy of the inclusion of $M^3$ in some fixed thickening $(M^{3})^*$, where a thickening is just another hyperbolic $3$-manifold containing ours.

We will start by making clear what we mean by a maximal structure.

\begin{Definition}
Let $M$ be a manifold with an analytic $(G,X)$-structure. 
We say that $M^*$ is an \emph{isotopic thickening of $M$} if it is a thickening and there is a isotopy, $i'$, of the inclusion, $i: M \hookrightarrow M^*$, such that $i'(M)=M^*$.
\end{Definition}

Given two isotopic thickenings of $M$  we say that $M_1^*\leq M_2^*$ if there is a $(G,X)$ isomorphism from $M_1^*$ to some subset of $M_2^*$ extending the identity on $M$. Hence, we say that an isotopic thickening is maximal if it is with respect the partial order relation we have just defined.

In general, it is not clear whether maximal isotopic thickenings exist, nor 
under which circumstances they do exist. However, we will construct in our situation an explicit maximal
thickening.

\begin{lemma}
	\label{lm:inj_rad}
	Let $\mathrm{inj}_{M^3}(x)$ denote the injectivity radius at a point $x\in M^3$. Then, a necessary condition for a non-trivial thickening of $M^3$ to exist is that there must exist a sequence $\{x_n\}\subset M^3$ with $\mathrm{inj}_{M^3}(x_n)\rightarrow 0$.
\end{lemma}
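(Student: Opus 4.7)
The plan is to prove the contrapositive: if there is a uniform lower bound $\mathrm{inj}_{M^{3}}(x)\geq\epsilon>0$ for all $x\in M^{3}$, then every isotopic thickening of $M^{3}$ must be trivial. The argument naturally splits into two independent steps, the first purely metric and the second topological.

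The first step shows that a uniform positive lower bound on the injectivity radius forces $M^{3}$ to be metrically complete. Given a Cauchy sequence $\{x_n\}\subset M^{3}$, pick $N$ large enough that $d(x_n,x_m)<\epsilon/2$ for all $n,m\geq N$; the tail of the sequence then lies in the closed metric ball $\overline{B(x_N,\epsilon/2)}$. Since $\mathrm{inj}_{M^{3}}(x_N)\geq\epsilon$, the exponential map identifies this closed ball with the closed ball of radius $\epsilon/2$ in $\mathbb{H}^{3}$, which is compact. Hence $\{x_n\}$ admits a convergent subsequence, and being Cauchy it converges in $M^{3}$.

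The second step shows that a complete, connected hyperbolic manifold $M$ admits no proper hyperbolic thickening. If $M\subsetneq M^{*}$ were such a thickening, then $M$ would be open in $M^{*}$ (the inclusion is a local diffeomorphism), so connectedness of $M^{*}$ would produce a continuous path from a point of $M$ to a point of $M^{*}\setminus M$. Taking the first exit time along such a path yields a sequence $\{x_n\}\subset M$ converging in $M^{*}$ to some boundary point $p\in\partial M\subset M^{*}\setminus M$. Because the inclusion is a $(G,X)$-embedding it is a local isometry, so $\{x_n\}$ is Cauchy in $M$; completeness then forces convergence to some $x\in M$, and uniqueness of limits in $M^{*}$ gives $x=p$, contradicting $p\notin M$.

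Combining the two steps yields the contrapositive: a uniform lower bound on $\mathrm{inj}_{M^{3}}$ rules out any non-trivial thickening, hence a non-trivial (isotopic) thickening forces the injectivity radius to fail to be bounded below, producing the required sequence $x_n$ with $\mathrm{inj}_{M^{3}}(x_n)\to 0$. The only mildly delicate point is matching the paper's definitions: the relevant inclusions are $(G,X)$-embeddings, hence local isometries on embedded metric balls, which is precisely what is used in both steps; notably, the isotopy hypothesis in the definition of \emph{isotopic} thickening is not needed for this direction, so the argument in fact gives a slightly stronger necessary condition.
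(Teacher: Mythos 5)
Your argument is correct, but it takes a genuinely different route from the paper. The paper's proof is a two-line direct argument: given a non-trivial thickening $(M^3)^*$, pick $x\in\partial((M^3)^*\setminus M^3)$ and any sequence $x_n\to x$ with $x_n\in M^3$; since the metric ball in $M^3$ about $x_n$ of radius exceeding $d(x_n,x)$ misses points of $(M^3)^*$, one gets $\mathrm{inj}_{M^3}(x_n)\le d(x_n,x)\to 0$ immediately. You instead prove the contrapositive by factoring through completeness: a uniform lower bound $\mathrm{inj}_{M^3}\ge\epsilon$ forces metric completeness (tails of Cauchy sequences lie in compact exponential balls), and a complete connected $(G,X)$-manifold admits no proper thickening (first-exit-time argument along a path leaving $M$). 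Both steps are standard and sound; the one place to tighten is your justification that $\{x_n\}$ is Cauchy in the \emph{intrinsic} metric of $M$ --- this follows not from the inclusion being a local isometry but from the fact that the subarcs $\gamma|_{[t_n,t_m]}$ lie in $M$ and have length tending to zero, so $d_M(x_n,x_m)\to 0$; convergence in $M$ then contradicts $\gamma(t_0)\notin M$. What your route buys is a self-contained, quantitative statement (injectivity radius bounded below $\Rightarrow$ no thickening at all, isotopy hypothesis unused) at the cost of length; what the paper's route buys is brevity and the explicit estimate $\mathrm{inj}_{M^3}(x_n)\le d(x_n,x)$, which is what is actually exploited afterwards to locate where thickening can occur (only in the deformed cusps, via the thick--thin decomposition).
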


\begin{proof}
	Let us suppose a thickening $(M^3)^*$ exists. Then, take a point $x\in \partial((M^3)^*\setminus M^3)$. Any sequence $\{x_n\}\subset M^3$ such that $x_n\rightarrow x$ satisfies $\mathrm{inj}_{M^3}(x_n)\rightarrow 0$.
\end{proof}

The purpose of Lemma~\ref{lm:inj_rad}  is two-fold: first, it gives a condition for a thickening to be maximal (in the sense of the partial order relation we just defined), and second, it shows where a manifold could possibly be thickened. Taking into account a thick-thin decomposition of the manifold, the thickening can only be done in the deformed cusps.

Each cusp of $M^3$ is diffeomorphic to either $T^2\times [0,\infty)$ or $K^2\times [0,\infty)$. 
Let us consider a proper product compact subset $K^2\times[0,\lambda]$ or $T^2\times [0,\lambda]$ of an end, for some $\lambda>0$, and let us denote by $D_\rho$ the developing map of a structure with holonomy $\rho$ in the equivalence class $[\rho]\in \mathcal{R}(\Gamma,G)$.

\begin{lemma}
	\label{lm:section_developing}
	The image of the proper product subset by the developing map, $D_{\rho}(\widetilde{K^2}\times[0,\lambda])$, $D_{\rho}(\widetilde{T^2}\times[0,\lambda])$ lies within two tubular neighborhoods of a geodesic $\gamma \in M^3$, that is, in $N_{\epsilon_2}(\gamma)\setminus N_{\epsilon_1}(\gamma)$, where $N_\epsilon(\gamma)=\{x\in \mathbb{H}^3\mid d(x,\gamma)<\epsilon \}$. Moreover, for every geodesic ray exiting orthogonally from $\gamma$, the intersection of the ray with $D_{\rho}(\widetilde{K^2}\times[0,\lambda])$ is non-empty and transverse to any section $D_{\rho}(\widetilde{K^2}\times\{\mu\})$, $\mu\in[0,\lambda]$ and, analogously for an orientable end.
\end{lemma}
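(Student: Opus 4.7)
The plan is to exploit the fact that, after deforming the holonomy, the peripheral subgroup acts with an invariant geodesic $\gamma\subset\mathbb{H}^3$, and to reduce the inclusion statement to a compactness argument in cylindrical coordinates around~$\gamma$.

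By Remark~\ref{rmk:non-deg}, the deformed peripheral holonomy $\rho|_{\pi_1(K^2)}$ preserves the orientation type and is non-degenerate of type~I or~II; in both normal forms given by Theorem~\ref{th:repklein}, the group $\rho(\pi_1(K^2))$ preserves the vertical geodesic from~$0$ to~$\infty$ in the upper half-space model. For an orientable cusp, the deformed holonomy of $T^2$ is a non-trivial abelian loxodromic subgroup of $\mathrm{PSL}(2,\mathbb{C})$ and thus also has a common axis. Call this axis~$\gamma$ and introduce cylindrical coordinates $(r,\theta,h)$ around it as in Subsection~\ref{Subsection:conifolds}. A direct inspection of the normal forms of Theorem~\ref{th:repklein} shows that every generator of $\rho(\pi_1(K^2))$ preserves the $r$-coordinate: it acts as a combination of a translation in~$h$, a rotation in~$\theta$ and, for the orientation-reversing generator, a reflection, but $r$ is always fixed.

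To bound the developed image I would then pick a compact fundamental domain $F\subset\widetilde{K^2}\times[0,\lambda]$ for the action of $\pi_1(K^2)$. Continuity of the developing map together with compactness of $F$ ensure that $D_{[\rho]}(F)$ is a compact subset of $\mathbb{H}^3\setminus\gamma$, the disjointness from~$\gamma$ being automatic for small deformations since at the complete structure the cusp develops to a horoball disjoint from~$\gamma$. Hence $r(D_{[\rho]}(F))$ is a compact subset of $(0,\infty)$, contained in some interval $[\epsilon_1,\epsilon_2]$. By the $\rho$-equivariance of $D_{[\rho]}$ and the fact that $\rho(\pi_1(K^2))$ preserves~$r$, the entire developed image $D_{[\rho]}(\widetilde{K^2}\times[0,\lambda])$ has the same $r$-range, yielding $D_{[\rho]}(\widetilde{K^2}\times[0,\lambda])\subset N_{\epsilon_2}(\gamma)\setminus N_{\epsilon_1}(\gamma)$. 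The orientable case is identical with $T^2$ replacing~$K^2$.

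For the transversality assertion I would argue by continuity from the complete structure combined with a judicious choice of representative within the isotopy ambiguity of~\cite{CEG}. At the complete cusp the slices $K^2\times\{\mu\}$ develop to horospheres centred at the cusp, which meet every vertical geodesic pointing to the cusp transversely; for a small deformation one can isotope the inclusion of $M^3$ into its thickening so that the developed slices become the level sets $\{r=r(\mu)\}$ of the radial function around~$\gamma$, and these are tautologically transverse to every geodesic ray exiting orthogonally from~$\gamma$, while each such ray meets the image in the non-empty segment $r\in[r(0),r(\lambda)]$. The hardest step will be precisely this choice of radial representative: since an incomplete structure is not uniquely determined by its holonomy, transversality and surjectivity on radial rays must be arranged by hand, and verifying that the required isotopy exists and is compatible on the periphery is where the real content lies, presumably motivating the subsection title ``The radial structure''.
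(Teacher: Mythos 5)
Your treatment of the first assertion is sound and essentially the paper's own argument: the paper likewise takes a compact fundamental domain $\overline{K}$, notes that for the complete structure $D_{[\rho_0]}(\overline{K})$ is a prism between two horospheres, and gets the containment in $N_{\epsilon_2}(\gamma)\setminus N_{\epsilon_1}(\gamma)$ from continuity of the developing map on compacta together with equivariance; your explicit observation that the normal forms of Theorem~\ref{th:repklein} preserve the distance $r$ to the axis is a clean way of phrasing that equivariance step.

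The gap is in the second assertion. Your plan is to isotope the inclusion so that the developed slices become the radial level sets $\{r=r(\mu)\}$, after which transversality and non-emptiness are tautological. But the existence of such an isotopy is essentially equivalent to what the lemma asserts: to push each slice onto a level set of $r$ you must already know that the radial projection restricted to the developed slice is a local diffeomorphism (transversality) and surjective onto the space of rays orthogonal to $\gamma$ (non-emptiness). Moreover, in the logical order of the paper this is circular: Lemma~\ref{lm:section_developing} is invoked later, in the construction of the radial thickening, precisely to show that the radial geodesics foliate the cusp transversally to $S\times\{0\}$, so the radial representative cannot be presupposed here. The non-circular argument is the one the paper sketches: at the complete structure the slices develop onto horospheres, which meet orthogonally the geodesics asymptotic to the parabolic fixed point; for $[\rho]$ near $[\rho_0]$ both the developed slices and the field of rays orthogonal to $\gamma$ are $C^1$-close, over the compact fundamental domain, to those horospheres and asymptotic geodesics, so transversality persists by stability of transversality and propagates to the whole image by equivariance. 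Non-emptiness then follows because the radial projection induces a local diffeomorphism from the compact surface $K^2\times\{\mu\}$ (resp.\ $T^2\times\{\mu\}$) to the compact quotient of the unit normal bundle of $\gamma$ by the peripheral group, and a local diffeomorphism between compact surfaces is a covering, hence surjective. You correctly identified where the difficulty sits, but the step you defer is the entire content of the second assertion.
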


\begin{proof}
	We use a modified argument of Thurston (see his notes 
	\cite[Chapters 4 and 5]{ThurstonNotes}) to prove the lemma for a non-orientable end (the same idea goes for an orientable one). The original argument of Thurston shows that in an ideal triangulated manifold, the image of the universal cover of the end under the developing map is the whole tubular neighborhood but the geodesic. Let $[\rho_0]$ be the parabolic representation corresponding to the complete structure, then $D_{\rho_0}(\widetilde{K^2}\times[0,\lambda])$ is the region between two horospheres centered at an ideal point  $p_\infty \in \partial_{\infty}\mathbb{H}^3$. Let $K\subset \widetilde{K^2}\times[0,\lambda]$ denote a fundamental domain of $K^2\times[0,\lambda]$. The domain $K$ can be taken so that $D_{\rho_0}(\overline{K})$  is a rectangular prism between two horospheres. 
	
We want to deform 
 $D_{\rho_0}(\overline{K})$  as we deform $\rho_0$ to
$\rho$. We do that by deforming the horosphere centered at $p_\infty$ 
to surfaces equidistant to the geodesics $\gamma_\rho$ invariant by the holonomy of
the peripheral subgroup $\rho(\pi_1(K^2))$.
The deformation of the horosphere to equidistant surfaces is described in 
\cite[\S4.4]{ThurstonNotes} in the half-space model of $\mathbb H^3$, see also
Benedetti--Petronio \cite[\S E.6.iv]{BenedettiPetronio}.
Alternatively, we can view the deformation of the horosphere  to the  equidistant surfaces as follows.
Consider $\mathbb Z^2<\pi_1(K^2)$ the orientation preserving subgroup of index $2$,
$\rho(\mathbb Z^2)$ is contained in a unique one-complex parameter subgroup  $U_\rho\subset \mathrm{PSL }(2,\mathbb C)$
 (ie~$U_\rho$ is the exponential image of a $\mathbb C$-line
in the Lie algebra $\mathfrak{sl}(2,\mathbb C)$). This $U_\rho$ depends continuously on $\rho$,
 and given $x\in\mathbb H^3$
 the orbit $U_\rho (x)=\{g(x)\mid g\in U_\rho \}$ is a surface containing $x$ such that: 
 when $\rho=\rho_0$  then   $U_\rho (x)$  is 
 a horosphere centered at $p_\infty$, and when  when $\rho\neq \rho_0$, then  $U_\rho  (x)$ is a surface equidistant to the geodesic $\gamma_\rho$.
 Using this construction,  the image of the domain
$D_{\rho_0}(\overline{K})$ deforms to  $D_{\rho}(\overline{K})$ with the required properties, by following the equidistant surfaces for the factor $\widetilde{K^2}$ and the geodesics orthogonal 
to these surfaces for the factor 
$[0,\lambda]$.
\end{proof}

\begin{Definition}
The geodesic of Lemma~\ref{lm:section_developing} is called the \emph{soul} of the end.
\end{Definition}

\begin{remark}
	The face of the section of proper product subset the cusp $K^2\times[0,\lambda]$ or $T^2\times [0,\lambda]$ that is glued to the thick part of the manifold is the section of the cusp which is further away from the geodesic. Hence, we will only consider thickenings \textquotedblleft towards" the soul.
\end{remark}

\smallskip

Let $x$ be a point in a cusp of the manifold and consider its image under the developing $y=D_{\rho}(\tilde{x})$ of any lift $\tilde{x}$. There is only one geodesic segment in $\mathbb{H}^3$ such that $\gamma(0)=y$ and goes towards the soul orthogonally. In cylindrical coordinates, if $y=(r,\theta, h)$, the image of the geodesic consists of $\{(t,\theta, h)\mid t\in [0,r]\}$. Let us denote by $\gamma_x$ the corresponding geodesic in $M^3$.

\begin{theorem}
There exists a maximal thickening $M^*$ of a half-open product $M=K^2\times[0,\lambda)$ or $T^2\times [0,\mu)$. It is characterized by the following property: for every point $x\in M$, the geodesic $\gamma_x$ can be extended in $M^*$ so that $D_{\rho}(\tilde{\gamma_x})$ is the geodesic whose cylindrical coordinates with respect to the soul are $\{(t,\theta, h)\mid t\in (0,r]\}$.
\end{theorem}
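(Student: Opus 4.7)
The plan is to realize $M^*$ as the quotient of the complement of the soul in $\mathbb{H}^3$ by the peripheral holonomy, and to derive the radial extension property by direct inspection in the cylindrical coordinates of Subsection~\ref{Subsection:conifolds}. Let $\Lambda \subset \mathrm{Isom}(\mathbb{H}^3)$ denote the image under $\rho$ of the peripheral subgroup, and let $g\subset \mathbb{H}^3$ be the soul geodesic produced by Lemma~\ref{lm:section_developing}. From the explicit normal forms in Theorem~\ref{th:repklein} (and the analogous description for an orientable torus cusp) one reads off that $g$ is $\Lambda$-invariant and that every nontrivial element of $\Lambda$ has its fixed set contained in $g$; thus $\Lambda$ acts freely and properly discontinuously on $\mathbb{H}^3\setminus g$. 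I would then define
\[
M^* := (\mathbb{H}^3\setminus g)/\Lambda,
\]
endowed with the induced hyperbolic structure, whose holonomy is $[\rho]$.

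Next, I would verify that $M^*$ is an isotopic thickening of $K^2\times[0,\lambda)$ (or $T^2\times[0,\mu)$) satisfying the radial property. By Lemma~\ref{lm:section_developing}, the developing map embeds the universal cover of the given cusp into an annular region of $\mathbb{H}^3\setminus g$ with horospherical sections transverse to the radial rays, and passing to the $\Lambda$-quotient yields an embedding $K^2\times[0,\lambda)\hookrightarrow M^*$ extending the original inclusion; contracting radially in cylindrical coordinates gives the isotopy required by the definition of isotopic thickening. For any point $x$ in the cusp with lifted coordinates $(r_x,\theta_x,h_x)$, the entire radial arc $\{(t,\theta_x,h_x):t\in(0,r_x]\}$ lies in $\mathbb{H}^3\setminus g$, and its projection to $M^*$ is the desired geodesic extension of $\gamma_x$.

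Finally, maximality follows from Lemma~\ref{lm:inj_rad} together with the classification of local models in Subsection~\ref{Subsection:conifolds}. If $M^{**}$ is a strictly larger isotopic thickening, then there is a sequence $x_n\in M^*$ approaching a point of $M^{**}\setminus M^*$ with $\mathrm{inj}_{M^*}(x_n)\to 0$; since the injectivity radius of $M^*$ is bounded below on every metric shell $\{r\geq\epsilon\}/\Lambda$, the lifts $\widetilde{x}_n$ must accumulate on the soul $g$. Any $(G,X)$-extension across such an accumulation must locally look like a smooth hyperbolic manifold on a neighborhood of a $\Lambda$-orbit in $g$, but the action of $\Lambda$ on such a neighborhood produces a cone angle $\alpha\neq 2\pi$ in the orientable and type I cases, or a cone on $P^2$ in the type II case, neither of which is a smooth manifold. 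This contradiction yields $M^*=M^{**}$; the uniqueness part of the radial characterization then follows because any thickening fulfilling the property must contain the image of every punctured radial ray, which already exhausts $M^*$. I expect this last step to be the main obstacle: converting ``$\mathrm{inj}$ collapses along $x_n$'' into ``$\widetilde{x}_n\to g$'' needs a uniform Margulis-type estimate on each shell, and ruling out a smooth extension past $g$ relies essentially on the explicit metric models of Subsection~\ref{Subsection:conifolds}.
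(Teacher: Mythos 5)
The construction at the heart of your proposal fails at its first step: the peripheral holonomy group $\Lambda$ does not, in general, act properly discontinuously on $\mathbb{H}^3\setminus g$. In the type I normal form of Theorem~\ref{th:repklein} the element $\rho(b)(z)=e^{\alpha i}z$ is an elliptic rotation about $g$ of angle $\alpha$, and in the type II case $\rho(a)^2(z)=e^{2\alpha i}z$ is again elliptic; for an orientable cusp with irrational ratio of generalized Dehn coefficients the peripheral group is likewise non-discrete. Whenever the rotation angle is an irrational multiple of $2\pi$ (the generic case for these deformations), the orbit of a point of $\mathbb{H}^3\setminus g$ under the cyclic group generated by that rotation is dense in a circle $\{r=\mathrm{const},\ h=\mathrm{const}\}$, so $\Lambda$ is not discrete and $(\mathbb{H}^3\setminus g)/\Lambda$ is not Hausdorff, let alone a hyperbolic manifold. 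Freeness (fixed sets contained in $g$) does not imply proper discontinuity. This is exactly why the developing map of a deformed cusp is not a covering onto its image, and why the paper builds the radial thickening \emph{locally}: it covers a product piece of the end by simply connected charts, thickens each chart $U$ to the union of radial rays $R(U)$, and glues the thickened charts using only the \emph{finite} set $T$ of deck transformations that can make two lifts meet near a fixed fundamental domain. A global repair of your idea would be to develop into the universal cover of $\mathbb{H}^3\setminus g$ (unrolling the angular coordinate $\theta$ to $\mathbb{R}$), where the lifted holonomy does act freely and properly discontinuously; but as written your $M^*$ is not a manifold.

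Your maximality argument has the right flavour and is close to the paper's: a strictly larger isotopic thickening would force the developing image of some open set to meet a ball around a point of the soul, contradicting that $\rho(b)$ (type I), $\rho(a^2)$ (type II), or a suitable peripheral element (orientable case, split according to $p/q$ rational or irrational) acts as a covering transformation. However, as stated it leans on the unavailable global quotient (the asserted lower bound for the injectivity radius on shells $\{r\geq\epsilon\}/\Lambda$) and on the claim that the cone angle differs from $2\pi$, which is only guaranteed for deformations near the complete structure. These points are repairable once the construction of $M^*$ is fixed; the proper-discontinuity issue is the essential gap.
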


\begin{proof}
Given a cusp section $S:=K^2$ or $T^2$, a product subset of the end, $K:=S\times[0,\lambda]$, a fixed fundamental domain $K_0$ of $K$ and a small neighborhood of $K_0$, $N(K_0)$, the set $T:=\{t\in \mathrm{Deck}(\tilde{K}/K)\mid tN(K_0)\cap N(K_0) \neq \emptyset \}$ is finite, where $\mathrm{Deck}(\tilde{K}/K)$ denotes the group of covering transformations of the universal cover. Hence, we can suppose that $D_{\rho\mid(T\overline{K_0})}$ is an embedding.

Let $\mathcal{U}$ be an open cover of $K$ by simply connected charts. For each $U$, take a lift $U_0\in \tilde{\mathcal{U}}$ such that $U_0\cap K_0 \neq \emptyset$ and consider $D_{\rho}(U_0)$. Given such a lift $U_0$, the other possible lift that could have non-empty intersection with $K_0$ are  $tU$, for $t\in T$. Furthermore, we can always assume that the chart $U$ coincides with the image of $U_0$ under the developing map, $D_{\rho}(U_0)$. Thus, we can identify 
	$$K\cong (\bigcup_{U\in \mathcal{U}} D_{\rho}(U_0))/\sim,$$
	 where the equivalence relation is by the action of $\mathrm{hol}(t)$, for $t \in T$.

Each $U\in \mathcal{U}$ can be thickened by identifying $U$ with $D_{\rho}(U_0)$ and considering, in cylindrical coordinates, the set of rays $R(U)=\{(t,\theta,h)\in \mathbb{H}^3\setminus \{soul\}\mid \exists (t_0, \theta, h)\in U, t<t_0\}$. Given two lifts of two thickened charts $R(U_1)$ and $R(U_2)$ with non-empty intersection with $K_0$, we glue them together in the points corresponding to $\mathrm{hol}(t)(R(U_1))\cap R(U_2)$, where $t\in T$. This defines a thickening of the cusp $K^*$. 

We have yet to show that it is isotopic to the original (half-open) product subset. Let us consider the section $S\times\{0\}$ of the cusp, the radial geodesics $\gamma_x$ for $x\in S\times\{0\}$ define a foliation of $K^*$ of finite length. Moreover, due to Lemma~$\ref{lm:section_developing}$, the foliation is transversal to $S\times\{0\}$. By re-parameterizing the foliation and considering its flow, we obtain a trivialization of the cusp, $K^*\cong S\times[0,\mu)$. Similarly, $K^*\setminus K$ is also a product. This let us construct an isotopy from $K^*$ to $K$.

This thickening clearly satisfies the property that $\gamma_x\subset{K^*}$ can be extended so that $D_{\rho}(\tilde{\gamma_x})=\{(t,\theta, h)\mid t\in (0,r] \}$. By taking geodesics $\gamma_x$ to geodesics through the developing map, it is clear our thickening can be mapped into every other thickening satisfying this property. Furthermore, if we consider the thickenings to be isotopic, we obtain an embedding.

Regarding the maximality, we will differentiate between an orientable end and a non-orientable one. The general idea will be the same one, for another isotopic thickening $(K)^{**}$ to include ours, the developing map should map some open set $V$ into a ball $W$ around a point $y_0$ in the soul, what will led to some kind of contradiction.

If $K$ is non-orientable, let us denote the distinguishable generators of $\pi_1(K^2)$, $a,b$, with $aba^{-1}=b^{-1}$. If $[\rho]$ is type I, $y_0$ is fixed by $\rho(b)$. Let $y\in W\setminus\{\mathrm{soul}\}$ and $x\in V$ be its preimage. $W$ is invariant by $\rho(b)$ and, in addition, both $x$ and $b\cdot x$ belong to $V$. Now take the geodesic $\gamma: I \mapsto \widetilde{(K)^{**}}$ from $x$ to $x_0$ which corresponds to the geodesic from $y$ to $y_0$. By equivariance and continuity, $x_0=\lim \gamma(t)=\lim b\gamma(t)=bx_0$. This contradicts $b$ being a covering transformation. If $[\rho]$ is type II, the previous argument with $a^2$ holds.

If $K$ is orientable, we will follow the same arguments leading to the completion of the cusp (for more details see, for instance, \cite{BenedettiPetronio}). the deformation $[\rho]$ is characterized in terms of its generalized Dehn filling coefficients $\pm(p,q)$. The case $p=0$ or $q=0$ are solved as in the non-orientable cusp, so we have the $2$ usual cases, $p/q\in \mathbb{Q}$ or $p/q \in \mathbb{I}$. For $p/q\in \mathbb{Q}$, there exists $k>0$ such that $k(p,q)\in \mathbb{Q}^2$ and $(kp)a+(kq)b$ is a trivial loop in the new thickening. If $p/q \in \mathbb{I}$, then $y_0$ is dense in $\{\mathrm{soul}\}\cap V$, which is a contradiction. 
\end{proof}

\begin{Definition}
We call the previous thickening, the \emph{radial thickening} of the cusp.
\end{Definition}

\begin{remark}
If the manifold $M^3$ admits an ideal triangulation, the canonical structure coming from the triangulation is precisely the radial thickening of the cusp.
\end{remark}

\begin{figure}
	\centering
	\def\svgwidth{0.45\linewidth}
	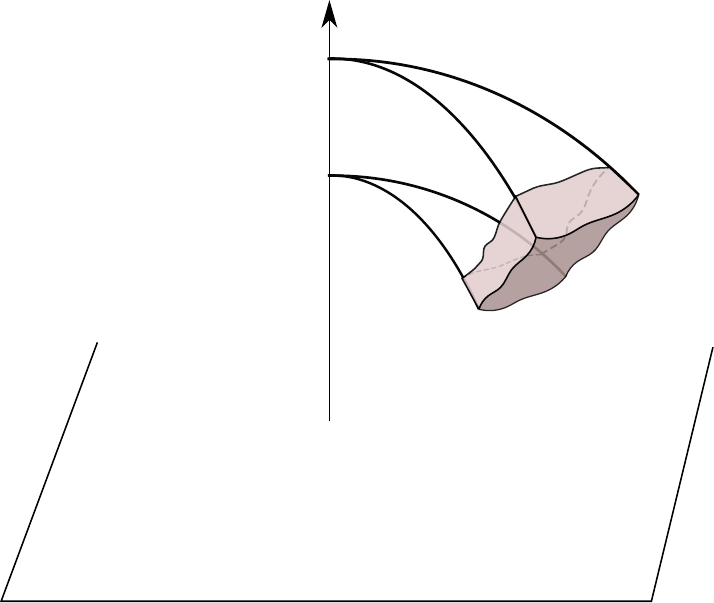
	\caption{The radial thickening.}
\end{figure}

\begin{Theorem} 
	\label{Theorem:completion}
	For a deformation of the holonomy $M^3$, the corresponding deformation of the metric can be chosen so that 
	on a non-orientable end: 
	\begin{itemize}
		\item It is a cusp, a metrically complete end, if the peripheral holonomy is parabolic.
		\item The metric completion is a solid Klein bottle with singular soul if the peripheral holonomy is of type I.
		\item The metric completion is a disc orbi-bundle with singular soul if it is of type II.
	\end{itemize}
	Furthermore, the cone angle $\alpha$ and the length $L$ of the singular locus is described by the peripheral boundary,
	so that those parameters start from $\alpha=L=0$ for the complete structure and grow continuously when deforming in either direction.
\end{Theorem}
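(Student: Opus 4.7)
The plan is to take as the chosen metric on a deformation the radial thickening constructed above, and then to identify its metric completion end by end. By Lemma~\ref{lm:inj_rad} and the thick-thin decomposition only the deformed cusps can contribute incompleteness, and the orientable cusps are handled by the classical Dehn-surgery picture, so the task reduces to a single non-orientable end. After conjugation, Theorem~\ref{th:repklein} puts the peripheral holonomy in one of three normal forms; the parabolic case is the standard horoball cusp and needs no argument, so only type~I and type~II remain.

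For a type~I end with $A(z)=e^l\overline{z}$, $B(z)=e^{i\alpha}z$, take as soul the common axis of $A$ and $B$ (the vertical geodesic from $0$ to $\infty$) and use the cylindrical coordinates of Subsection~\ref{Subsection:conifolds}. Then $B:(r,\theta,h)\mapsto(r,\theta+\alpha,h)$ and $A:(r,\theta,h)\mapsto(r,-\theta,h+l)$. By Lemma~\ref{lm:section_developing} the developing map of the radial thickening is a local isometry from the universal cover of the end into the punctured tubular neighbourhood $\{0<r\le r_0\}$ of this axis, equivariant for $\langle A,B\rangle$. The action of $B$ gives the ambient conifold $\mathbb{H}^3(\alpha)$ and $A$ descends to the glide reflection $(r,\theta,h)\mapsto(r,-\theta,h+l)$, which is exactly the identification in the definition of a solid Klein bottle with singular soul of cone angle $\alpha$ and length $L=l$. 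Its metric completion is obtained by adding the axis $r=0$.

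For a type~II end with $A(z)=e^{i\alpha}/\overline{z}$, $B(z)=e^lz$, the same coordinates give $B:(r,\theta,h)\mapsto(r,\theta,h+l)$, $A:(r,\theta,h)\mapsto(r,\theta+\alpha,-h)$ and $A^2:(r,\theta,h)\mapsto(r,\theta+2\alpha,h)$, a pure rotation commuting with $B$. Passing to the orientation torus cover, whose peripheral subgroup is $\langle A^2,B\rangle$, the metric completion is a solid torus with singular soul of length $l$ and cone angle $2\alpha$; the deck involution induced by $A$ restricts on the core circle to the reflection $h\mapsto-h$ and rotates the transverse discs by $\alpha$. The quotient by this involution realises the two generating involutions of the definition of disc orbi-bundle with singular soul (namely $A$ and $BA$, whose product is the translation by $l=2L$), producing a singular interval of length $L=l/2$ with cone angle $2\alpha$ in its interior and a cone on $P^2(2\alpha)$ at each of the two endpoints. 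Continuity of $(\alpha,L)$ in the deformation, and their simultaneous vanishing at the complete structure, then follow from the real-analytic coordinates of Theorem~\ref{Thm:coordinatesNO}.

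The most delicate point is the type~II case: $A^2$ has infinite order whenever $\alpha/\pi\notin\mathbb{Q}$, so $\rho(\pi_1(K^2))$ is not discrete in $\mathrm{Isom}(\mathbb{H}^3)$ and the literal quotient of $\mathbb{H}^3$ is non-Hausdorff near the axis. One must verify that the fixed-point sets of $A$ and of $BA$ meet $\mathbb{H}^3$ only at the isolated axis points $h=0$ and $h=l/2$ respectively, so that the image $\{0<r\le r_0\}$ of the radial thickening avoids them entirely, and that the completion introduces these two points only as the claimed $P^2(2\alpha)$ conical singularities at the endpoints of the singular interval. Once this is checked, the entire argument reduces to the explicit coordinate matching above.
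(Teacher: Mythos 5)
Your proposal is correct and follows essentially the same route as the paper's (very terse) proof: both reduce to cylindrical coordinates about the axis preserved by the peripheral holonomy, use the normal forms of Theorem~\ref{th:repklein} together with the orientation covering and the vanishing of the torsion parameter, and identify the completion with the model conifolds by an equivariant coordinate matching. You simply carry out explicitly (including the fixed-point and discreteness checks in the type~II case) what the paper compresses into a few lines, so no substantive difference in strategy arises.
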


\begin{proof}[Proof of Theorem~\ref{Theorem:completion}]
	The proof uses   the orientation covering and equivariance. More precisely, the deformation is constructed in the complete case for the 
	orientation covering and it can be made equivariant. The holonomy of a torus restricted from a Klein bottle
	is either parabolic or the holonomy of a solid torus with singular soul (and $\tau=0$). In particular the holonomy of a Klein bottle is
	parabolic iff its restriction to the orientable covering is parabolic. Furthermore, by using the description in cylindrical coordinates 
	(and using Figure~\ref{Figure:Projection}) and as $\tau=0$, the solid torus is equivariant by the action of 
	$\pi_2(K^2)/\pi_1(T^2) \cong \mathbb{Z}_2$.
\end{proof}

\section{Example: The Gieseking manifold}
\label{S:Gieseking}

We use the Gieseking manifold to illustrate the results of this paper. 
In particular the difference between deformation spaces obtained from ideal triangulations and from the variety of representations.

The Gieseking manifold $M$ is a non-orientable hyperbolic 3-manifold with finite volume and one cusp, with  
 horospherical section  a Klein bottle. It has an ideal triangulation
 with a single tetrahedron. The orientation cover of the Gieseking manifold is the figure eight knot exterior,
 and the ideal triangulation with one simplex lifts to Thurston's ideal triangulation with two ideal simplices in
 Thurston's notes \cite{ThurstonNotes}.

This manifold $M$ was constructed by Gieseking in his thesis in 1912, here we follow the description of Magnus in
\cite{Magnus}, using the notation of Alperin--Dicks--Porti \cite{ADP}.
Start with the regular ideal vertex $\Delta$ in $\mathbb H^3$, 
with vertices $\{0, 1, \infty, \frac{1-i\sqrt{3}}2\}$, Figure~\ref{fig:gieseking_labelled}. The side identifications are the non-orientable 
isometries defined by the M\"obius transformations
$$
 U(z)= \frac{ 1}{1+\tfrac{1+i\sqrt{3}}{2}\overline z} \qquad\textrm{ and }\qquad
 V(z)= -\tfrac{1+i\sqrt{3}}{2}\overline z+1 .
$$
The identifications of the faces are defined by their action on vertices:
$$
U\colon(\tfrac{1-i\sqrt{3}}2, 0 ,\infty)\mapsto (\tfrac{1-i\sqrt{3}}2,1,0)
\qquad\textrm{ and }\qquad
V\colon(1, 0 ,\infty)\mapsto (\tfrac{1-i\sqrt{3}}2,1,\infty).
$$

\begin{figure}[h]
\centering
\def\svgwidth{0.4\linewidth}
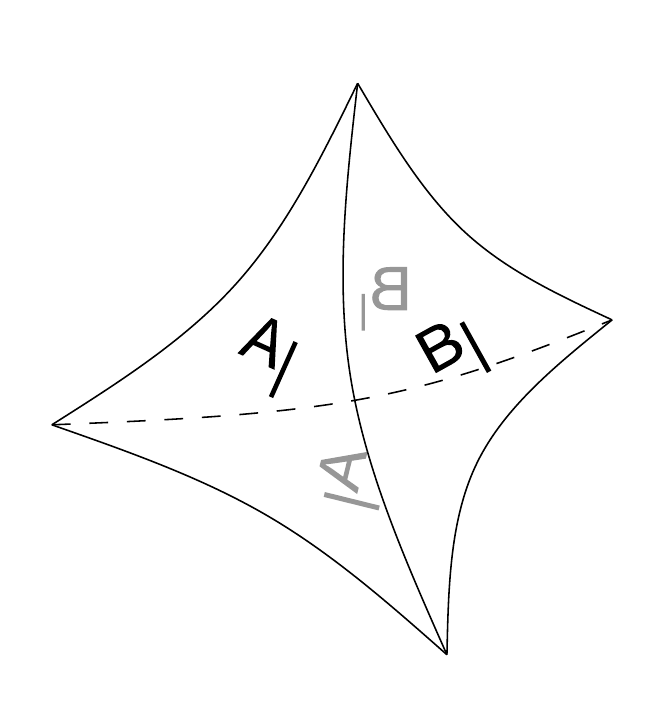
\caption{Gieseking Manifold with labeled edges.}
\label{fig:gieseking_labelled}
\end{figure}

By applying Poincar\'e's fundamental theorem
\begin{equation}
 \label{eqn:presentationG}
\pi_1(M)\cong\langle U, V\mid VU=U^2V^2\rangle 
\end{equation}

The relation $VU=U^2V^2$ corresponds to a cycle of length 6 around the edge.

\subsection{The deformation space $\mathrm{Def}(M, \Delta) $}

We compute the deformation space of the triangulation with a single tetrahedron, as in Section~\ref{S:CombinatorialDef}.

For any ideal tetrahedron in $\mathbb{H}_3$, we set its ideal vertices in $0, \, 1, \infty \, $and $-\omega$, where $\omega$ is in $\mathbb{C}_+$, the upper half-space of $\mathbb{C}$. The role played by $-\omega$ will be the one of $\frac{1-i\sqrt{3}}{2}$ in the complete structure. For any such $\omega$ it is possible to glue the faces of the tetrahedron in the same pattern as in the Gieseking manifold via two orientation-reversing hyperbolic isometries, which we will call likewise $U$ and $V$.

For the gluing to follow the same pattern, it must map $U: \; (-\omega,\, 0, \infty ) \rightarrow (-\omega, \, 1, \, 0)$ and $V: \; (1, \,0 , \, \infty) \rightarrow (-\omega, \, 1, \infty)$. The orientation-reversing isometries $U$ and $V$ satisfying this are:
$$
U(z)=\frac{1}{\frac{1+\omega}{|\omega|^2}\overline{z}+1}  \qquad\textrm{ and }\qquad V(z)=-(1+\omega)\overline{z}+1.$$

Although it is always possible to glue the faces in the same pattern as in the Gieseking manifold, not for all them the gluing will have a hyperbolic structure. 

\bigskip

Let us label the edges as in Figure~\ref{fig:gieseking_labelled}. For the topological manifold to be geometric, we only have to check that the pairing is proper (see \cite{Ratcliffe}). In this case, the only condition which we need to satisfy is that the isometry that goes through the only edge cycle is the identity. This is given by:

$$
a \overset{V}{\longrightarrow} c \overset{V}{\longrightarrow} b \overset{U}{\longrightarrow} d \overset{U}{\longrightarrow} e \overset{V^{-1}}{\longrightarrow} f \overset{U^{-1}}{\longrightarrow} a,
$$
and, therefore, we will have a hyperbolic structure if and only if $U^{-1}V^{-1}U^2V^2=\textrm{Id}$. Doing this computation, we obtain the equation

\begin{equation}
\label{eqn:proper}
 |\omega(1+\omega)|=1.
\end{equation}

Let us show that this equation matches the one obtained from 
Definition~\ref{dfn:DefSpace}. If we denote by $z(a)$ the edge invariant of $a$ and analogously for the rest of the edges, we have that the equation describing the deformation space of the manifold in terms of this triangulation is
$$\frac{z(a)z(b)z(e)}{\overline{z(c)z(d)z(f)}}=1.$$
Writing down all of the edge invariants in terms of $z(a)$ by means of the tetrahedron relations results in the equation 
\begin{gather}
\label{equation_gieseking}
\frac{z(a)^2\overline{z(a)}^2}{(1-z(a))(1-\overline{z(a)})}=\frac{|z(a)|^4}{|1-z(a)|^2}=1.
\end{gather} 
If we substitute $z(a)=-1/\omega$, we obtain
$$
\frac{1}{\omega \overline{\omega}(\omega+1)(\overline{\omega}+1)}=1,
$$
which is equivalent to Equation~\eqref{eqn:proper}.

\begin{Remark}
	The set $\{w\in\mathbb C\mid \vert w(1+w)\vert=1\}$ is homeomorphic to $S^1$, and the deformation space
	$\{w\in\mathbb C\mid \vert w(1+w)\vert=1  \textrm{ and }  \mathrm{Im}(w)>0\}$ is homeomorphic
	to an open interval, see Figure~\ref{Figure:AlgebraicDefSpace}.
\end{Remark}

We justify the remark and Figure~\ref{Figure:AlgebraicDefSpace}.
Firstly, to prove that set of algebraic solutions  is homeomorphic to a circle, we write the defining equation
$  \vert w(1+w)\vert=1$ as
$$
\Big\vert \Big(w+\frac{1}{2}\Big)^2-\frac{1}{4}\Big\vert= 1
$$
Thus $\big(w+\frac{1}{2}\big)^2$ lies in the circle of center $\frac{1}{4}$ and radius $1$.
As this circle separates $0$ from $\infty$, 
the equation defines a connected covering of  degree two 
of the circle.
Secondly, the set of algebraic solutions is invariant by the 
involutions $w\mapsto \overline{w}$ and $w\mapsto -1-w$ (hence symmetric with respect to the real line and the line defined
by real part equal to $-\frac{1}{2}$). Furthermore
it intersects the real line at $w=\frac{-1\pm \sqrt{5}}{2}$
and the line with real part $-\frac{1}{2}$ at $\frac{-1\pm i\sqrt{3}}{2}$.

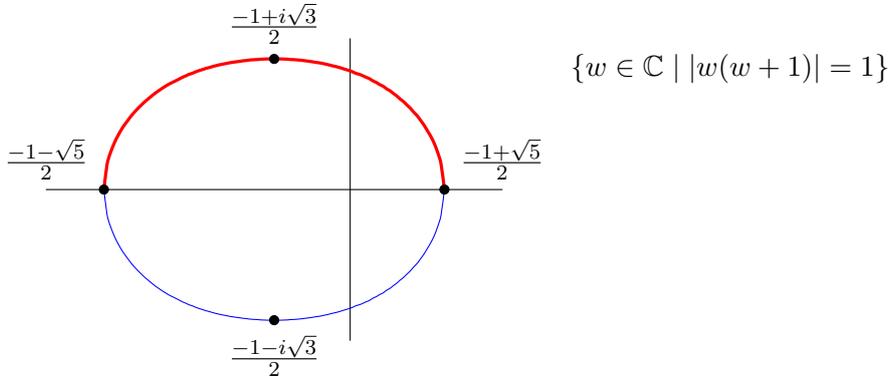
\begin{figure}[h]
	\begin{center}
		\begin{tikzpicture}[line join = round, line cap = round, scale=2]
		\draw[thin] (-2,0)--(1, 0);
		\draw[thin] (0,-1)--(0, 1);
		\begin{scope}[yscale=-1,xscale=1]
		\draw [blue, thin ] plot [smooth ] coordinates {
			(.620, 0.)  (.598, .169)  (.575, .248)  (.553, .306)  (.530, .353)  (.508, .390)  (.486, .427)  (.463, .459)  
			(.441, .489)  (.418, .513)  (.396, .539)  (.374, .560)  (.351, .583)  (.329, .600)  (.306, .621)  (.284, .637)  
			(.262, .655)  (.239, .668)  (.217, .685)  (.194, .696)  (.172, .710)  (.150, .720)  (.127, .735)  (.105, .744)  
			(0.082, .755)  (0.060, .764)  (0.038, .770)  (0.015, .781)  (-0.007, .789)  (-0.030, .796)  (-0.052, .803)  
			(-0.074, .810)  (-0.097, .816)  (-.119, .821)  (-.142, .827)  (-.164, .832)  (-.186, .837)  (-.209, .841)  (-.231, .844)  (-.254, .848)  
			(-.276, .851)  (-.298, .854)  (-.321, .857)  (-.343, .859)  (-.366, .861)  (-.390, .862)  (-.410, .864)  (-.430, .865)  (-.460, .866) 
			(-.480, .866)  (-.500, .866)  (-.520, .866)  (-.540, .866)  (-.570, .865)  (-.590, .864)  (-.610, .862)  (-.630, .861)  (-.660, .859)  
			(-.680, .856)  (-.700, .854)  (-.720, .851)  (-.750, .847)  (-.770, .844)  (-.790, .841)  (-.810, .837)  (-.840, .831)  (-.860, .827)  
			(-.880, .822)  (-.900, .817)  (-.930, .808)  (-.950, .803)  (-.970, .796)  (-.990, .790)  (-1.02, .779)  (-1.04, .770)  (-1.06, .764)  
			(-1.08, .754)  (-1.10, .746)  (-1.13, .730)  (-1.15, .720)  (-1.17, .710)  (-1.19, .700)  (-1.22, .682)  (-1.24, .669)  (-1.26, .655)  
			(-1.28, .641)  (-1.31, .617)  (-1.33, .600)  (-1.35, .583)  (-1.37, .564)  (-1.40, .534)  (-1.42, .512)  (-1.44, .489)  (-1.46, .462)  
			(-1.49, .421)  (-1.51, .389)  (-1.53, .353)  (-1.55, .312)  (-1.58, .236)  (-1.60, .163)  (-1.62, 0.)
		};
		\end{scope}
		\draw [red,  very thick ] plot [smooth ] coordinates {
			(.620, 0.)  (.598, .169)  (.575, .248)  (.553, .306)  (.530, .353)  (.508, .390)  (.486, .427)  (.463, .459)  
			(.441, .489)  (.418, .513)  (.396, .539)  (.374, .560)  (.351, .583)  (.329, .600)  (.306, .621)  (.284, .637)  
			(.262, .655)  (.239, .668)  (.217, .685)  (.194, .696)  (.172, .710)  (.150, .720)  (.127, .735)  (.105, .744)  
			(0.082, .755)  (0.060, .764)  (0.038, .770)  (0.015, .781)  (-0.007, .789)  (-0.030, .796)  (-0.052, .803)  
			(-0.074, .810)  (-0.097, .816)  (-.119, .821)  (-.142, .827)  (-.164, .832)  (-.186, .837)  (-.209, .841)  (-.231, .844)  (-.254, .848)  
			(-.276, .851)  (-.298, .854)  (-.321, .857)  (-.343, .859)  (-.366, .861)  (-.390, .862)  (-.410, .864)  (-.430, .865)  (-.460, .866) 
			(-.480, .866)  (-.500, .866)  (-.520, .866)  (-.540, .866)  (-.570, .865)  (-.590, .864)  (-.610, .862)  (-.630, .861)  (-.660, .859)  
			(-.680, .856)  (-.700, .854)  (-.720, .851)  (-.750, .847)  (-.770, .844)  (-.790, .841)  (-.810, .837)  (-.840, .831)  (-.860, .827)  
			(-.880, .822)  (-.900, .817)  (-.930, .808)  (-.950, .803)  (-.970, .796)  (-.990, .790)  (-1.02, .779)  (-1.04, .770)  (-1.06, .764)  
			(-1.08, .754)  (-1.10, .746)  (-1.13, .730)  (-1.15, .720)  (-1.17, .710)  (-1.19, .700)  (-1.22, .682)  (-1.24, .669)  (-1.26, .655)  
			(-1.28, .641)  (-1.31, .617)  (-1.33, .600)  (-1.35, .583)  (-1.37, .564)  (-1.40, .534)  (-1.42, .512)  (-1.44, .489)  (-1.46, .462)  
			(-1.49, .421)  (-1.51, .389)  (-1.53, .353)  (-1.55, .312)  (-1.58, .236)  (-1.60, .163)  (-1.62, 0.)
		};
		\draw(.62,0)[fill=black] circle(.03);
		\draw(-1.62,0)[fill=black] circle(.03);
		\draw(-.500, .866)[fill=black] circle(.03);
		\draw(-.500, -.866)[fill=black] circle(.03);
		\draw(1,.2) node{$\frac{-1+\sqrt{5}}{2}$};  
		\draw(-2,.2) node{$\frac{-1-\sqrt{5}}{2}$}; 
		\draw(-.5,1.1) node{$ \frac{-1+i\sqrt{3}}{2}$  };
		\draw(-.5,-1.1) node{$ \frac{-1-i\sqrt{3}}{2}$  };
		\draw(2.5,.8) node{$\{w\in\mathbb C\mid \vert w(w+1)\vert=1\}$};
		\end{tikzpicture}
	\end{center}
	\caption{ The set of solutions of the compatibility equations  and $\mathrm{Def}(M, \Delta)$ (the top half). }
	\label{Figure:AlgebraicDefSpace}
\end{figure}

Let us construct the link of the cusp. We denote the link of each cusp point as in Figure~\ref{fig:gieseking_manifold_link} and glue them to obtain the link as in Figure~\ref{fig:gieseking_link}, which is a Klein bottle.

\begin{figure}[h]
	\centering
	\begin{minipage}{.5\textwidth}
		\centering
		{\footnotesize
		\def\svgheight{0.2\textheight}
		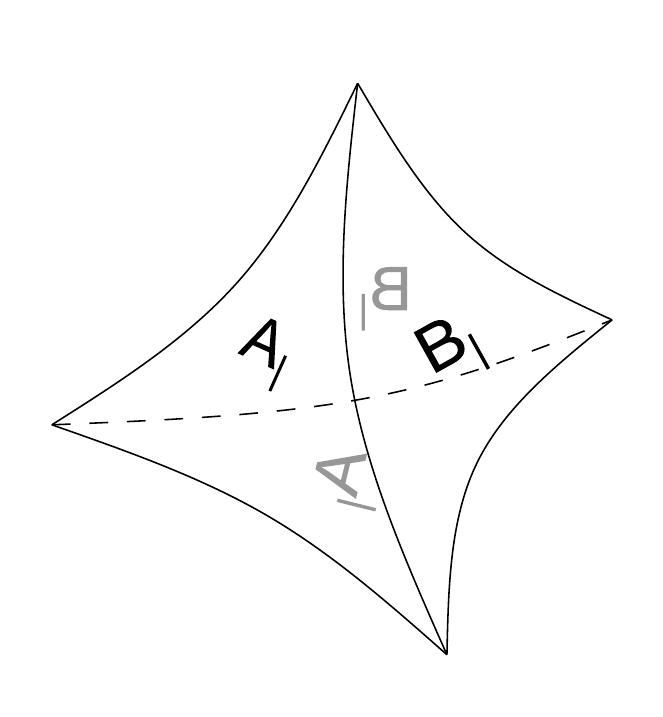}
		\caption{Gieseking manifold with link.}
		\label{fig:gieseking_manifold_link}
	\end{minipage}%
	\begin{minipage}{0.5\textwidth}
		\centering
		\def\svgheight{0.2\textheight}
		%% Creator: Inkscape 1.0.1 (3bc2e813f5, 2020-09-07), www.inkscape.org
%% PDF/EPS/PS + LaTeX output extension by Johan Engelen, 2010
%% Accompanies image file '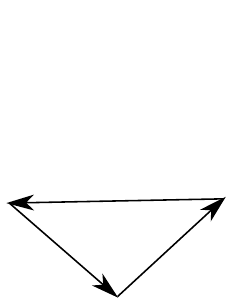' (pdf, eps, ps)
%%
%% To include the image in your LaTeX document, write
%%   \input{<filename>.pdf_tex}
%%  instead of
%%   \includegraphics{<filename>.pdf}
%% To scale the image, write
%%   \def\svgwidth{<desired width>}
%%   \input{<filename>.pdf_tex}
%%  instead of
%%   \includegraphics[width=<desired width>]{<filename>.pdf}
%%
%% Images with a different path to the parent latex file can
%% be accessed with the `import' package (which may need to be
%% installed) using
%%   \usepackage{import}
%% in the preamble, and then including the image with
%%   \import{<path to file>}{<filename>.pdf_tex}
%% Alternatively, one can specify
%%   \graphicspath{{<path to file>/}}
%% 
%% For more information, please see info/svg-inkscape on CTAN:
%%   http://tug.ctan.org/tex-archive/info/svg-inkscape
%%
\begingroup%
  \makeatletter%
  \providecommand\color[2][]{%
    \errmessage{(Inkscape) Color is used for the text in Inkscape, but the package 'color.sty' is not loaded}%
    \renewcommand\color[2][]{}%
  }%
  \providecommand\transparent[1]{%
    \errmessage{(Inkscape) Transparency is used (non-zero) for the text in Inkscape, but the package 'transparent.sty' is not loaded}%
    \renewcommand\transparent[1]{}%
  }%
  \providecommand\rotatebox[2]{#2}%
  \newcommand*\fsize{\dimexpr\f@size pt\relax}%
  \newcommand*\lineheight[1]{\fontsize{\fsize}{#1\fsize}\selectfont}%
    \ifx\svgwidth\undefined%
	\ifx\svgheight\undefined%
    	\setlength{\unitlength}{111.59984559bp}%
    	\ifx\svgscale\undefined%
      		\relax%
    	\else%
      		\setlength{\unitlength}{\unitlength * \real{\svgscale}}%
    	\fi%
    \else%
    	\def\svgratio{1.28046774}%
    	\setlength{\unitlength}{\svgheight / \real{\svgratio}}%
    \fi%
  \else%
    \setlength{\unitlength}{\svgwidth}%
  \fi%
  \global\let\svgwidth\undefined%
  \global\let\svgheight\undefined%
  \global\let\svgratio\undefined%
  \global\let\svgscale\undefined%
  \makeatother%
  \begin{picture}(1,1.28046774)%
    \lineheight{1}%
    \setlength\tabcolsep{0pt}%
    \put(0,0){\includegraphics[width=\unitlength,page=1]{giesekinglink.pdf}}%
    \put(0.31754025,0.15795718){\color[rgb]{0,0,0}\makebox(0,0)[lt]{\lineheight{0}\smash{\begin{tabular}[t]{l}$\zeta$\end{tabular}}}}%
    \put(0,0){\includegraphics[width=\unitlength,page=2]{giesekinglink.pdf}}%
    \put(0.0558092,0.59191517){\color[rgb]{0,0,0}\makebox(0,0)[lt]{\lineheight{0}\smash{\begin{tabular}[t]{l}$\alpha$\end{tabular}}}}%
    \put(0.84148442,0.60277703){\color[rgb]{0,0,0}\makebox(0,0)[lt]{\lineheight{0}\smash{\begin{tabular}[t]{l}$\alpha$\end{tabular}}}}%
    \put(0.4129483,0.41475319){\color[rgb]{0,0,0}\makebox(0,0)[lt]{\lineheight{0}\smash{\begin{tabular}[t]{l}$\gamma$\end{tabular}}}}%
    \put(0.54358033,0.6244518){\color[rgb]{0,0,0}\makebox(0,0)[lt]{\lineheight{0}\smash{\begin{tabular}[t]{l}$\beta$\end{tabular}}}}%
    \put(0.46857195,0.83483498){\color[rgb]{0,0,0}\makebox(0,0)[lt]{\lineheight{0}\smash{\begin{tabular}[t]{l}$\delta$\end{tabular}}}}%
    \put(0.2600026,0.98329445){\color[rgb]{0,0,0}\makebox(0,0)[lt]{\lineheight{0}\smash{\begin{tabular}[t]{l}$\epsilon$\end{tabular}}}}%
    \put(0.63145436,0.17997032){\color[rgb]{0,0,0}\makebox(0,0)[lt]{\lineheight{0}\smash{\begin{tabular}[t]{l}$\zeta$\end{tabular}}}}%
    \put(0.62930636,0.99053587){\color[rgb]{0,0,0}\makebox(0,0)[lt]{\lineheight{0}\smash{\begin{tabular}[t]{l}$\epsilon$\end{tabular}}}}%
  \end{picture}%
\endgroup%

		\caption{Link of the cusp point.}
		\label{fig:gieseking_link}
	\end{minipage}
\end{figure}

Now we take two tetrahedra and construct the orientation covering of $M$ (the figure eight knot exterior). For the first tetrahedron, we will denote by $z_1:=z(a)$, and $z_2, z_3$ so that they follow the cyclic order described in the tetrahedron relations. Afterwards, in the second tetrahedron, we denote by $w_i$ the edge invariant of the corresponding edge after applying an orientation reversing isometry to the tetrahedron, that is,  $w_i=\frac{1}{\overline{z_i}}$.

We consider the link of the orientation covering. The derivative of the holonomy of the two loops in the link of the orientation covering, $l_1$, $l_2$, depicted in Figure~\ref{fig:gieseking_longitude} (which are free homotopic)  is $\frac{w_1}{z_1}=\frac{1}{|w_1|^2}$ and $\frac{w_3}{z_3}=\frac{1}{|w_3|^2}$. For the manifold to be complete, $\mathrm{hol'}(l_i)=1$ for $i=1,2$, which happens if and only if $z_1=\frac{1}{2}+\frac{\sqrt{3}}{2}\textrm{i}$. This corresponds to the regular ideal tetrahedron, which, as expected, is the manifold originally given by Gieseking. Notice that the upper loop (the one going through the side $\epsilon$) can be taken as a \emph{distinguished} longitude. A suitable meridian is drawn in 
Figure~\ref{fig:gieseking_meridian}.

\begin{figure}[h]
	\centering
	\begin{minipage}{.5\textwidth}
		\centering
		\def\svgheight{0.2\textheight}
		{\scriptsize
		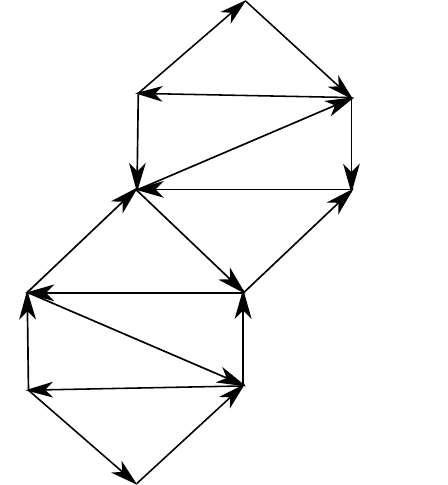}
\caption{Two free homotopic loops.}
\label{fig:gieseking_longitude}
	\end{minipage}%
	\begin{minipage}{0.5\textwidth}
		\centering
		\def\svgheight{0.2\textheight}
		{\scriptsize
		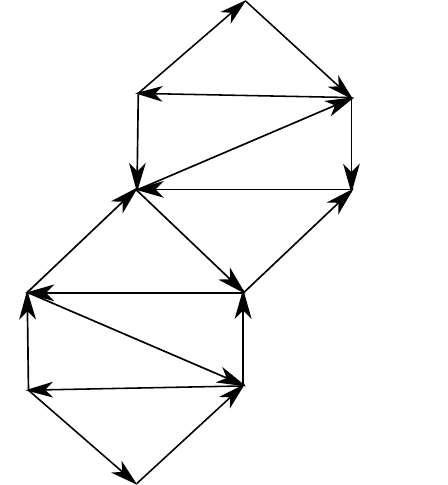}
\caption{Meridian in the link of the cover.}
\label{fig:gieseking_meridian}
	\end{minipage}
\end{figure}

Let us check that both the longitude and the meridian satisfy the conditions we stated for their holonomy in remark~\ref{rmk:hol_lm}, that is $\mathrm{hol'}(l)\in \mathbb{R}, |\mathrm{hol'}(m)|=1$. We have already shown it for the longitude. Regarding the meridian,
$$
\mathrm{hol'}(m)=\frac{z_2z_3w_2w_3}{w_2z_1z_2w_1}=\frac{z_1}{z_3}\frac{w_1}{w_3}=\frac{z_1}{\overline{z_1}}\frac{\overline{z_3}}{z_3},
$$
therefore $|\mathrm{hol}'(m)|=1$. This leads to the result that the generalized Dehn filling coefficients of a lifted structure have the form $(0,q)$, after an appropriate choice of longitude-meridian pair.

\smallskip

The last result could also had been obtained from Thurston's triangulation. By rotating the tetrahedra, our triangulation could be related with his, and the parameters identified. We can then check that in his choice of longitude and meridian, the holonomy has the same features if the structure is a lift from Gieseking manifold.

 \subsection{The Gieseking manifold as a punctured torus bundle}

The Gieseking manifold $M$ is  fibered over the circle with fibre a punctured torus $T^2\setminus\{*\}$. 
We use this structure to compute the variety of representations.
The monodromy of the fibration is an automorphism  
$$\phi\colon T^2\setminus\{*\}\to T^2\setminus\{*\}.$$
The map $\phi$ is the restriction 
of a map of the compact torus $T^2 \cong\mathbb R^2/\mathbb Z^2  $ that lifts 
 to the linear map of $\mathbb R^2$ with matrix 
$$\begin{pmatrix} 0 & 1 \\ 1 & 1
                                                                                           
                                                                                            \end{pmatrix}
.$$ 
This matrix also describes the action on the first homology group $H_1(  T^2\setminus\{*\} ,\mathbb Z )\cong\mathbb Z^2$.  The map 
$ \phi$ is orientation reversing (the matrix has determinant $-1$) and
$\phi^2$
is the monodromy of the
orientation covering of $M$, the figure eight knot exterior.

The fibration induces a presentation of the fundamental group of $M$:
$$
\pi_1(M)\cong \langle r, s, t\mid t r t^{-1}=\phi(r),\ tst^{-1}=\phi(s)\rangle
$$
where $ \langle r, s\mid \rangle =\pi_1(T^2\setminus\{*\})\cong F_2$, and
$$
\begin{array}{rcl}
 \phi_*\colon F_2 & \to & F_2\\
 r & \mapsto & s \\
 s & \mapsto & rs
\end{array}
$$
is the algebraic monodromy, the map induced by $\phi$ on the fundamental group. 

The relationship with the presentation \eqref{eqn:presentationG} of $\pi_1(M)$ from the triangulation is given by
$$
r=UV, \qquad s=VU,\qquad t= U^{-1}.
$$
Furthermore, a peripheral group is given by $\langle rsr^{-1} s^{-1}, t\rangle$, which is the group of the Klein bottle.

We use this fibered structure to compute the variety of conjugacy classes of representations.
Set $$G=\operatorname{Isom}(\mathbb H^3)\cong \mathrm{PO}(3,1) \cong \mathrm{PSL}(2,\mathbb C)\rtimes \mathbb Z_2 ,$$ 
 let $$\hom^{\mathrm{irr}}(\pi_1(M),G )$$ denote the space of \emph{irreducible} representations
 (ie~that have no invariant line in $\mathbb{C}^2$).
As we are interested in deformations, we restrict to representations $\rho$ that preserve the orientation type: $\rho(\gamma)$
is an orientation preserving isometry iff $\gamma \in \pi_1(M)$ is represented by a loop that preserves the orientation of $M$, $\forall\gamma\in \pi_1(M)$. We denote the subspace
of representations that preserve the orientation type by 
$$\hom^{\mathrm{irr}}_+(\pi_1(M),G ).$$
Let
 $$\hom^{\mathrm{irr}}_+(\pi_1(M),G )/G$$ be their the space of their conjugacy classes. 
 
\begin{Proposition}
\label{Prop:traces}
We have an homeomorphism, via the trace of $\rho(s)$:
$$
\begin{array}{rcl}
\hom^{\mathrm{irr}}_+(\pi_1(M),G )/G & \to &  \big(\{x\in \mathbb{C} \mid \vert x-1\vert =1\textrm{ and } x\neq 2\}
\big)/\!\!\sim  \\
{ [\rho]} & \mapsto & \operatorname{trace}( \rho(s))
\end{array}
$$
where $\sim  $ is the relation by complex conjugation.

In particular,  $\hom^{\mathrm{irr}}_+(\pi_1(M),G )/G$ is homeomorphic to a half-open interval.
\end{Proposition}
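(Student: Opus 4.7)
The plan is to use the punctured-torus-bundle presentation $\pi_1(M) = \langle r, s, t \mid trt^{-1} = s,\ tst^{-1} = rs\rangle$ given just above. Since $\rho$ preserves orientation type and $r, s$ represent orientation-preserving loops (they live in the orientable fiber) while $t$ reverses orientation, I set $R = \rho(r)$, $S = \rho(s) \in G_0 = \mathrm{PSL}(2,\mathbb C)$ and write $\rho(t) = A\sigma$ with $A \in G_0$ and $\sigma$ the complex conjugation generator of $G/G_0$. Conjugation by $\rho(t)$ then acts on $G_0$ by $X \mapsto A \bar X A^{-1}$, so the two defining relations translate to
\begin{equation*}
A \bar R A^{-1} = S, \qquad A \bar S A^{-1} = RS.
\end{equation*}

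Setting $\tau = \operatorname{trace}(\rho(s))$ and taking traces immediately gives $\operatorname{trace}(R) = \bar\tau$ and $\operatorname{trace}(RS) = \bar\tau$. To pin down the constraint on $\tau$ I would compare the trace characters of the conjugate pairs $(\bar R, \bar S)$ and $(S, RS)$: their mixed-trace coordinates must agree. The left-hand side gives $\operatorname{trace}(\bar R \bar S) = \overline{\operatorname{trace}(RS)} = \tau$; for the right-hand side, the $\mathrm{SL}(2,\mathbb C)$ identity $\operatorname{trace}(XY)+\operatorname{trace}(XY^{-1}) = \operatorname{trace}(X)\operatorname{trace}(Y)$ with $X = S$, $Y = RS$ yields $\operatorname{trace}(SRS) = \bar\tau(\tau - 1)$. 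The resulting equation $\tau = \bar\tau(\tau-1)$ rearranges to $|\tau|^2 = 2\,\mathrm{Re}(\tau)$, i.e.\ $|\tau - 1| = 1$. The exclusion $\tau \ne 2$ captures irreducibility: at $\tau = 2$ all three traces equal $2$, so $R, S, RS$ are (non-trivial) parabolic with a common fixed point on $\partial_\infty \mathbb H^3$; conversely the Fricke identity shows $\operatorname{trace}([R,S]) \ne 2$ elsewhere on the circle, ensuring irreducibility.

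For the bijection, surjectivity follows because any $\tau$ on the punctured circle produces an irreducible pair $(R, S)$ with trace character $(\bar\tau, \tau, \bar\tau)$ unique up to conjugation in $G_0$; the identity $|\tau-1|=1$ makes the characters of $(\bar R, \bar S)$ and $(S, RS)$ coincide, hence an $A$ exists. Injectivity follows because irreducibility makes the centralizer of $\langle R, S\rangle$ trivial, so the conjugator $A$ is unique and the character class determines $[\rho]$ in $\mathcal{R}(\pi_1(M),G_0)$. Passing to $G$, conjugation by an orientation-reversing element sends $\rho$ to its entrywise complex conjugate and hence sends $\tau \mapsto \bar\tau$, producing the stated quotient. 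Topologically, $\{|\tau - 1| = 1, \tau\ne 2\}$ is a circle minus a point ($\cong \mathbb R$), and the $\mathbb Z_2$-action by complex conjugation has the single fixed point $\tau = 0$, so the quotient is a half-open interval. The main obstacle I expect is the clean derivation of $|\tau - 1| = 1$ via the character-equality step and keeping track of $\mathrm{PSL}$-vs-$\mathrm{SL}$ sign ambiguities, but once that identity is in hand, the rest reduces to standard character-variety facts for $F_2$.
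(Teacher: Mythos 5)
Your proposal is correct and follows essentially the same route as the paper: restrict to the fiber group $F_2=\langle r,s\rangle$ inside $\mathrm{SL}(2,\mathbb{C})$, use the trace identity $\operatorname{tr}(XY)+\operatorname{tr}(XY^{-1})=\operatorname{tr}(X)\operatorname{tr}(Y)$ to turn the conjugation relations coming from $\rho(t)$ into the circle equation $|x-1|=1$, detect reducibility at $x=2$ via the commutator trace, and use irreducibility to get a unique conjugating element $A$ and hence a unique extension $\rho(t)=A\circ c$. The only cosmetic difference is that the paper packages the computation as the fixed locus of $\chi\mapsto\overline{\phi^*\chi}$ in the Fricke--Klein coordinates $(\tau_r,\tau_s,\tau_{rs})$ on $X(F_2,\mathrm{SL}(2,\mathbb{C}))\cong\mathbb{C}^3$ (and handles the $\mathrm{PSL}$-to-$\mathrm{SL}$ lift by citing that $\langle r,s\rangle$ is the commutator subgroup), whereas you manipulate the matrix relations directly.
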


\begin{proof}
Let $\rho\colon \pi_ 1(M)\to G$ be an irreducible representation. 
The fibre $T^2\setminus\{*\}$ is orientable, 
so the restriction of $\rho$ to the free group $\langle r, s\mid\rangle\cong F_2$ is contained
in  $\mathrm{PSL}(2,\mathbb C)$. Furthermore, as $\langle r, s\mid\rangle$ is the commutator subgroup,
we may assume that the image $\rho( \langle r, s\mid\rangle)\subset \mathrm{SL}(2,\mathbb C)$, 
cf Heusener--Porti  \cite{HeusenerPorti04}.

 We consider the variety of characters
$X(F_2,\mathrm{SL}(2,\mathbb C))$ and the action of the algebraic monodromy $\phi_*$
on the variety of characters:
$$
\begin{array}{rcl}
 \phi^*\colon X( F_2, \mathrm{SL}(2,\mathbb C)) & \to &  X( F_2, \mathrm{SL}(2,\mathbb C))  \\
 \chi& \mapsto &  \chi\circ \phi_* 
\end{array}
$$

\begin{Lemma}
\label{lemma:inv}
The restriction of $ \hom^{\mathrm{irr}}_+(\pi_1(M),G )/G$ to 
$X(F_2, \mathrm{SL}(2,\mathbb C) )$ is contained in 
$$
\{\chi\in X(F_2, \mathrm{SL}(2,\mathbb C) ) \mid \phi^*(\chi)=\overline\chi  \}
$$
\end{Lemma}

\begin{proof}[Proof of Lemma~\ref{lemma:inv}]
Let $\rho\in  \hom^{\mathrm{irr}}(\pi_1(M),G )$. If we write
$\rho(t)= A\circ c$ for $A\in\mathrm{PSL}(2,\mathbb{C})$ and $c$ complex conjugation, from the relation
$$
t \gamma t^{-1} =\phi_*(\gamma)\qquad \forall \gamma\in F_2,
$$
we get
$$
A \overline{ \rho(\gamma) } A^{-1} =\rho(\phi_*(\gamma)) \qquad \forall \gamma\in F_2.
$$
Hence if $\rho_0$ denotes the restriction of $\rho$ to $F_ 2$, it satisfies that $\overline{\rho_0}$ and 
 $\rho_0\circ\phi_*$ are conjugate, hence they have the same character and the lemma follows. 
\end{proof}

 Lemma~\ref{lemma:inv} motivates the following computation:
 
\begin{Lemma}
\label{lemma:fixed}
We have a homeomorphism:
% 
% The subset of $X(F_2, \mathrm{SL}(2,\mathbb C) )$ fixed by the action of  $t$ is homeomorphic to the circle
$$
\{\chi_\rho\in X(F_2, \mathrm{SL}(2,\mathbb C) ) \mid  \phi^*(\chi_\rho)=\overline{\chi_\rho} \}\cong\{x\in \mathbb{C} \mid \vert x-1\vert =1\} 
$$
by setting $x= \operatorname{trace}( \rho(s))=\chi_\rho(s)$.
\end{Lemma}

\begin{proof}[Proof of Lemma~\ref{lemma:fixed}]
First at all we describe  coordinates for $ X(F_2, \mathrm{SL}(2,\mathbb C))$. 
Let $\tau_r$, $\tau_s$ and $\tau_{rs}$ denote the trace functions, ie~$
\tau_r(\chi_\rho)=\chi_\rho(r)=\mathrm{trace}(\rho(r))
$, and similarly for $s$ and $rs$.
Fricke-Klein's theorem yields an isomorphism
$$
(\tau_{r},\tau_{s}, \tau_{rs})\colon X(F^2, \mathrm{SL}(2,\mathbb C)  )\cong \mathbb C^3
$$
(see Goldman \cite{Goldman09} for a proof).
From the relations
$$
\phi_*(r)=s, \qquad \phi_*(s)=rs,\qquad \phi_*(rs)=srs,
$$
the equality $\phi^*(\chi_\rho)=\overline{\chi_\rho}$ is equivalent to:
$$
\overline{\tau_r}=\tau_s,\qquad \overline{\tau_s}=\tau_{rs}, \qquad \overline{\tau_{rs}}=\tau_{srs}=\tau_s\tau_{rs}-\tau_r.
$$
In the  expression for $ \tau_{srs} $ we have used the relation $\mathrm{tr}(AB)=\mathrm{tr}(A)\mathrm{tr}(B)-\mathrm{tr}(AB^{-1})$ 
for $A,B\in  \mathrm{SL}(2,\mathbb C) $.
Taking $x=\tau_r=\tau_{rs}$ and $\tau_s=\overline x$, the defining equation is $x+\overline{x}=x \overline{x}$. Namely, the circle
$
\vert x-1\vert =1
$.
\end{proof}

To prove Proposition~\ref{Prop:traces}, we need to know which conjugacy classes of representations of $F^2$ are irreducible. 
By Culler--Shalen \cite{CullerShalen}, a 
character $\chi_\rho$
in  $ X(F^2, \mathrm{SL}(2,\mathbb C))$ is reducible iff  $\chi_\rho([r,s])=
\operatorname{tr}( \rho([r,s])) =2$, 
and a straightforward computation shows that this happens in the circle
$\vert x-1\vert = 1$ precisely when $x=2$. Now, let  $\rho$ be a representation of  $F^2 $ in $\mathrm{SL}(2,\mathbb C)$ whose
character $\chi_\rho$ satisfies $\phi^*(\chi_\rho)=\overline{\chi_\rho} $.
Assume  $\rho$ is irreducible, then $\rho\circ\phi_* $ and $\overline{\rho} $ are conjugate 
by a unique matrix $A\in  \mathrm{PSL}(2,\mathbb C)$:
$$
A c\rho(\gamma)c A^{-1}=A \overline{\rho(\gamma)} A^{-1}
= \rho( \phi_*(\gamma) ),\qquad \forall \gamma\in F_2,
$$
where $c$ means complex conjugation. Thus, by defining $\rho(t)= A\circ c$ this  gives a unique way to extend $\rho$ to $\pi_1(M)$. 
 
When $\chi_\rho$ is reducible, then $x=2$ and the character $\chi_\rho$ is trivial. Then either $\rho$ is trivial or parabolic. 
In any case, it is easy to check that all possible extensions to $\pi_1(M)$ yield reducible representations.
  \end{proof}

\subsection{Comparing both ways of computing deformation spaces}

We relate both ways of computing deformation spaces, via the ideal simplex and via the fibration:

\begin{Lemma}
Given a triangulated structure with parameter $w$ as in 
\eqref{eqn:proper}, the parameter $x$ of its holonomy as in 
Proposition~\ref{Prop:traces} is
 $$
 x=1+w+|w|^2
 $$
(or $x=1+\overline w+|w|^2$, because $x$ is only defined up to complex conjugation).
 \end{Lemma}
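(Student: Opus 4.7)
The plan is to compute $\rho(s)$ directly in $\mathrm{PSL}(2,\mathbb{C})$ using the Möbius expressions for $U$ and $V$, extract its trace, and check the identity with the help of the compatibility equation $\lvert\omega(1+\omega)\rvert = 1$. I would begin by recalling that $s = VU$ under the dictionary $r = UV,\ s = VU,\ t = U^{-1}$ relating the presentation from the triangulation to the fibered one. Since $U$ and $V$ are both orientation-reversing antiholomorphic isometries, the composition $VU$ is orientation-preserving and hence represents an element of $\mathrm{PSL}(2,\mathbb{C})$, so taking its trace makes sense.

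Next, I would encode $U$ and $V$ by matrices with the understanding that each carries a complex conjugation on the argument, and use the rule that composing two antiholomorphic Möbius maps gives the holomorphic Möbius map whose matrix is the product of the second antiholomorphic matrix with the entrywise conjugate of the first. Applied to the explicit formulas, this yields
$$
\rho(s) \;=\; \begin{pmatrix} -(1+\omega) & 1 \\ 0 & 1 \end{pmatrix}\begin{pmatrix} 0 & 1 \\ (1+\bar\omega)/|\omega|^2 & 1 \end{pmatrix} \;=\; \begin{pmatrix} (1+\bar\omega)/|\omega|^2 & -\omega \\ (1+\bar\omega)/|\omega|^2 & 1 \end{pmatrix}.
$$
The determinant of this matrix equals $|1+\omega|^2/|\omega|^2$, which by the compatibility equation $\lvert\omega(1+\omega)\rvert=1$ simplifies to $1/|\omega|^4$. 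Normalizing the matrix so that it lies in $\mathrm{SL}(2,\mathbb{C})$ (dividing by $1/|\omega|^2$) multiplies the trace by $|\omega|^2$, giving $\mathrm{trace}(\rho(s)) = 1 + \bar\omega + |\omega|^2$.

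Since in Proposition~\ref{Prop:traces} the character $x$ of an irreducible representation is only defined up to the complex conjugation ambiguity coming from the choice of orientation on the orientation covering, we may equivalently write $x = 1 + \omega + |\omega|^2$, which is the claimed formula. As a consistency check, I would verify $\lvert x-1\rvert = 1$: indeed $|x-1|^2 = |\omega+|\omega|^2|^2 = |\omega|^2(1+\omega)(1+\bar\omega) = |\omega(1+\omega)|^2 = 1$, so $x$ lies on the expected circle. The only real obstacle in this argument is the bookkeeping of complex conjugation when composing the antiholomorphic maps $U$ and $V$ and the scalar normalization required to pass from $\mathrm{GL}(2,\mathbb{C})$ to $\mathrm{PSL}(2,\mathbb{C})$; once those conventions are fixed, everything reduces to an elementary matrix computation and a single application of the compatibility equation.
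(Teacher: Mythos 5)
Your computation is correct and follows essentially the same route as the paper: write the antiholomorphic side-pairings as matrices composed with conjugation, multiply, normalize to $\mathrm{SL}(2,\mathbb{C})$ using $\lvert\omega(1+\omega)\rvert=1$, and read off the trace. The only (immaterial) difference is that you compute $\rho(s)=VU$ while the paper computes $\rho(r)=UV$; since these differ by conjugation by the antiholomorphic $V$, the traces are complex conjugates of one another, which is exactly the ambiguity allowed in the statement.
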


\begin{proof}
 As $r=UV$, a straightforward computation yields
 $$
 \rho(r)=\begin{pmatrix} 0 & |w|^2 \\  -\frac{1}{\vert w\vert ^2} &  1+w+|w|^2 \end{pmatrix}
 \in \mathrm{SL}(2,\mathbb C).
 $$
 Then the lemma follows from $x=\mathrm{trace}(\rho(r))$
\end{proof}

The fact that not all deformations are obtained from triangulations (Corollary~\ref{Coro:notrealized})  is illustrated in the following remark, 
whose proof is an elementary computation.

\begin{Remark}
The image of the map
$$
\begin{array}{rcl}
\{w\in\mathbb C\mid \vert w(1+w)\vert=1\}& \to & \{x\in \mathbb{C} \mid \vert x-1\vert =1\} \\
w & \mapsto & x=1+w+|w|^2
\end{array}
$$
is $ \{\vert x-1\vert =1\}\cap \{\mathrm{Re}(x)\geq \frac{3}{2}\} $, ie~the
arc of circle bounded by the image of the holonomy structure (and its complex conjugate).
See Figure~\ref{Figure:xandw}.
\end{Remark}

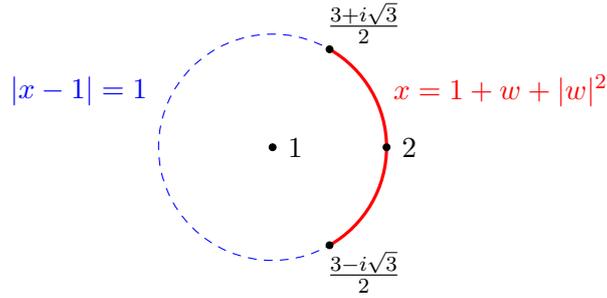
\begin{figure}[h]
\begin{center}
\begin{tikzpicture}[line join = round, line cap = round, scale=1.5]
 \draw [blue, thin, dashed ] (1,0) circle [radius=1];
 \draw [red, very thick ]   (2,0) arc[radius = 1, start angle= 0, end angle= 60];  ;
 \draw [red, very thick ]   (2,0) arc[radius = 1, start angle= 0, end angle= -60];  ;    
\draw(1,0)[fill=black] circle(.03);
\draw(2,0)[fill=black] circle(.03);
\draw(1.500, .866)[fill=black] circle(.03);
\draw(1.500, -.866)[fill=black] circle(.03);
 \draw(2.2,0) node{$2$};  
  \draw(1.2,0) node{$1$};  
\draw(-.7,.5)[blue] node{$\vert x-1\vert =1$}; 
 \draw(1.8,1.1) node{$ \frac{3+i\sqrt{3}}{2}$  };
\draw(1.8,-1.1) node{$ \frac{3-i\sqrt{3}}{2}$  };
 \draw(3,.5)[red] node{$x=1+w+|w|^2 $};
\end{tikzpicture}
\end{center}
 \caption{ The image of $ x=1+w+|w|^2$ in the circle $\vert x-1\vert =1$. }
\label{Figure:xandw}
\end{figure}

To be precise on the type of structures at the peripheral Klein bottle, 
we compute the trace of the peripheral element $[r,s]$ for each method and apply Lemma~\ref{Lemma:typeofrep}:
\begin{itemize}
 \item We compute it from the variety of representations, ie~from $x$. Using the notation of the proof of Proposition~\ref{Prop:traces}:
$$
\tau_{[r,s]}= x_1^2+x_2^2+x_3^2 - x_1x_2x_3-2= (x+\overline{x})^2-3 (x+\overline{x})-2= (x+\overline{x})((x+\overline{x})-3)- 2.
$$
The complete hyperbolic structure corresponds to $ (x+\overline{x})=3$, hence, by deforming $x$ we may have 
either $\tau_{[r,s]}> -2$ or $\tau_{[r,s]}< -2$.

\item  Next we compute it from the ideal triangulation, ie~from $w$. As $x=1+w+|w|^2$, we get
$$
\tau_{[r,s]}= 2 \operatorname{Re} (w+ w^2)\geq -2
$$
because $ \vert w+ w^2 \vert =1$. 
\end{itemize}

\begin{Remark}
As a final remark, we notice that the  path of deformations of the Gieseking manifold lifts to a path
of deformations of the figure-eight knot exterior that is the same
as the one considered
by Hilden, Lozano and Montesinos in \cite{HLMRemarkable} by deforming polyhedra.
The transition from  type I to type II of the Gieseking manifold corresponds to the 
\emph{spontaneous surgery} in \cite{HLMRemarkable}.
\end{Remark}

\bibliographystyle{plain}

\noindent \textsc{Departament de Matem\`atiques, Universitat Aut\`onoma de Barcelona, 
08193 Cerdanyola del Vall\`es, and 
Centre de Recerca Matem\`atica}

\noindent \textsf{jduran@mat.uab.cat},   
\textsf{porti@mat.uab.cat}

\end{document}